\newcommand{\w}{\omega}
\newcommand{\spn}{\operatorname{span}}
\newcommand{\Null}{\operatorname{Null}}
\newcommand{\Horz}{\mathcal{H}}
\newcommand{\V}{\mathcal{V}}
\newcommand{\LV}{\mathcal{L}^{\mathcal{V}}}
\newcommand{\diverg}{\operatorname{div}}
\newcommand{\grad}{\operatorname{grad}}
\newcommand{\gradH}{\grad_{\Horz}}
\newcommand{\partialx}[1]{\frac{\partial}{\partial x^{#1}}}
\newcommand{\mfm}{\mathfrak{m}}
\newcommand{\mfmi}{\mathfrak{m_i}}
\newtheorem{thm}{Theorem}[section]
\newtheorem{cor}[thm]{Corollary}
\newtheorem{assumption}{Assumption}
\newtheorem{lem}[thm]{Lemma}
\newtheorem{nota}[thm]{Notation}
\newtheorem{prop}[thm]{Proposition}
\newtheorem{as}[thm]{Assumption}
\theoremstyle{definition}
\newtheorem{df}[thm]{Definition}
\newtheorem*{claim}{Claim}
\newtheorem{example}{Example}[section]
\newtheorem{rem}[thm]{Remark}
\numberwithin{equation}{section}
\newtheorem*{acknowledgement}{Acknowledgement}
\begin{document}

\title[Sub-Riemannian Laplacians]{Sub-Laplacians on sub-Riemannian manifolds}
\author[Gordina]{Maria Gordina{$^{\dagger}$}}
\thanks{\footnotemark {$\dagger$} This research was supported in part by NSF Grant DMS-1007496.}
\address{Department of Mathematics\\
University of Connecticut\\
Storrs, CT 06269, USA} \email{maria.gordina@uconn.edu}
\author[Laetsch]{Thomas Laetsch{$^{\dagger}$}}
\address{Department of Mathematics\\
University of Connecticut\\
Storrs, CT 06269, USA} \email{thomas.laetsch@uconn.edu}

\keywords{sub-Riemannian manifold, sub-Laplacian, hypoelliptic operator}

\subjclass{Primary 53C17, 35R01; Secondary 58J35}

\date{\today \ \emph{File:\jobname{.tex}}}

\begin{abstract} We consider different sub-Laplacians on a sub-Riemannian manifold $M$. Namely, we compare different natural choices for such operators, and give conditions under which they coincide. One of these operators is a sub-Laplacian we constructed previously in \cite{GordinaLaetsch2014a}. This operator is canonical with respect to the horizontal Brownian motion, we are able to define the sub-Laplacian without some a priori choice of measure. The other operator is $\diverg^{\w} \gradH$ for some volume form $\w$ on $M$. We illustrate our results by examples of three Lie groups equipped with a sub-Riemannian structure: $\operatorname{SU}\left( 2 \right)$, the Heisenberg group and the affine group.
\end{abstract}

\maketitle
\tableofcontents

\section{Introduction} In the present paper we study operators on sub-Riemannian manifolds which can be considered as geometrically natural analogues of the Laplace-Beltrami operators in the Riemannian setting. Some of the fundamental difficulties include absence of a canonical measure such as the Riemannian volume measure, and therefore lack of a naturally defined divergence of a vector field, and degeneracy of the metric that prevents us from using local formula for such an operator.

Sub-Riemannian geometry appears in many areas, for example, describing constrained systems in mechanics, or as limiting cases of Riemannian geometries. Roughly speaking, a sub-Riemannian manifold is a smooth manifold $M$ endowed with a bracket-generating (completely non-integrable) sub-bundle $\Horz$ of the tangent bundle $TM$ and a smooth fiberwise inner product on $\Horz$; the sub-bundle $\Horz$ is called the horizontal distribution. The degeneracy of operators defined only in terms of horizontal vector fields (smooth sections of $\Horz$) make sub-Riemannian manifolds natural settings to study sub-elliptic operators which are, in fact, hypoelliptic by an application of H\"{o}rmander's theorem  \cite{Hormander1967a} with the bracket generating assumption.  A more detailed review of these structures can be found in Section \ref{Sec: sub-Riem}.

Our approach is to compare two operators on a sub-Riemannian manifold $M$ that can be thought of as geometrically canonical to the sub-Riemannian structure we have on $M$. One of these operators, $\LV$,  is a sub-Laplacian we constructed previously in \cite{GordinaLaetsch2014a}. The advantage of this construction is that while it is canonical with respect to the horizontal Brownian motion, we are able to define the sub-Laplacian without some a priori choice of measure. Another operator we consider is $\diverg^{\w} \gradH$ for some volume form $\w$ on $M$ in Section \ref{sec: divgrad} which certainly depends on the form $\w$. This comparison culminates in Theorem \ref{thm.LV=divgradH} which gives necessary and sufficient conditions for these two operators to be equal.

In conclusion we want mention a number of related results. First of all, since Lie groups provide a number of meaningful examples, it is natural that there were several results in that setting, in particular, \cite{AgrachevBoscainGauthierRossi2009}. Their approach is to choose a reference measure out of several candidates such as Hausdorff or Popp's measure, which happens to be  scalar multiples of a Haar measure on a Lie group $G$. Popp's measure is attractive since local isometries are volume preserving, which uniquely identifies Popp's measure when the group of isometries of $G$ acts transitively on $G$. From this we also deduce that on Lie groups equipped with a left-invariant sub-Riemannian metric, Popp's measure is proportional to the left Haar measure. For a nice exposition on Popp's measure and the resulting sub-Laplacian, we refer the reader to \cite{BarilariRizzi2013}. It is not uncommon, however, to consider a left-invariant structure on $G$ while endowing $G$ with a right Haar measure.  To see that the choice of the left-invariant structure on $G$ with the right Haar measure is natural for study of sub-elliptic heat kernels we refer to \cite{DriverGrossSaloff-Coste2009a}. We refrain from making a single  choice of measure and illustrate our main results by looking at three examples in Section \ref{s.examples}. We consider our construction as a starting point of further studies of such sub-Laplacians including the corresponding heat kernel estimates, and connecting it to \cite{BaudoinGarofalo2011, BakryBaudoinBonnefont2009, BaudoinBonnefont2012} which will give rise to a number of functional inequalities.

\section{Sub-Riemannian Manifolds}\label{Sec: sub-Riem}
We start by recalling the standard definition of a sub-Riemannian manifold.
\begin{df} Let $M$ be a $d$-dimensional, connected, smooth manifold with tangent and cotangent bundles $TM$ and $T^*M$ respectively. Suppose that $\Horz \subset TM$ is an $m$-dimensional smooth sub-bundle such that the sections of $\Horz$ satisfy  \emph{H\"{o}rmander's condition} (the \emph{bracket generating condition}) formulated in Assumption \ref{HCassump}.
Suppose further that on each fiber of $\Horz$ there is an inner product $\langle \cdot, \cdot \rangle$ which varies smoothly between fibers.  In this case, the triple $(M, \Horz, \langle \cdot, \cdot \rangle)$ is called a \emph{sub-Riemannian manifold} of rank $m$, $\Horz$ is called the \emph{horizontal distribution}, and $\langle \cdot, \cdot \rangle$ is called the \emph{sub-Riemannian metric}. The vectors (resp.~vector fields)  $X \in \Horz$ are called \emph{horizontal vectors} (resp.~horizontal vector fields), and curves $\sigma$ in $M$ whose tangent vectors are horizontal, are called \emph{horizontal curves}.
\end{df}
Having been given $M$ and $\Horz$, the discussion in Section \ref{s.linear} gives us an alternative equivalent approach to the sub-Riemannian structure defined by a sub-Riemannian metric. Indeed, we could have alternatively introduced the symmetric, positive semi-definite \emph{sub-Riemannian bundle homomorphism} $\beta : T^*M \to TM$ such that $\beta(T^*M) = \mathcal{H}$ which is in unique correspondence with the sub-Riemannian metric through the equality $\langle \beta(p), X \rangle = p(X)$ which holds for all $1$-forms $p$ and horizontal vector fields $X$.

\begin{nota} \label{nota.subR1}
We will use $\{X_1, ..., X_m\}$ to denote a (local) \emph{horizontal frame}, that is, a set of vector fields which form a (local) fiberwise basis for $\Horz$. Further, we let $(x^1, ..., x^d)$ represent a (local) chart with corresponding tangent frame $\big\{ \partialx{1}, ..., \partialx{d} \big\}$ and dual frame $\{ dx^1, ..., dx^d\}$.  Finally, we define the smooth maps $\beta^{ij} = \langle \beta(dx^i), \beta(dx^j) \rangle = dx^i\left( \beta(dx^j)\right)$.
\end{nota}

\begin{rem}
If $\Horz = TM$, then $(M, \Horz, \langle \cdot, \cdot \rangle)$ is a Riemannian manifold. In this case, $\beta^{ij}$ is the familiar ``index raising operator'' $g^{ij}$ defined as the inverse of the metric $g_{ij} = \langle \partial/\partial x^i, \partial/\partial x^j \rangle$.
\end{rem}

\subsection{H\"{o}rmander's condition and its consequences}

\begin{assumption}(H\"{o}rmander's condition)\label{HCassump}
We will say  that $\Horz$ satisfies \emph{H\"{o}rmander's (bracket generating) condition} if horizontal vector fields with their Lie brackets span the tangent space $T_{p}M$ at every point $p \in M$.
\end{assumption}
As we remark below in Definition \ref{defi.subLaplace}, H\"{o}rmander's condition guarantees that every sub-Laplacian is hypoelliptic. In addition, H\"{o}rmander's condition has significant topological consequences. We recall the important Chow-Rashevski Theorem below, for more details we refer the reader to \cite{MontgomeryBook2002}. To this end, we define the \emph{Carnot-Caratheodory metric} $d_{CC}$ on $M$ by
\begin{align}\label{e.3.1}
& d_{CC}(x,y) =
\\
& \inf\left\{ \left( \int_0^1 |\sigma'(t)|^2 \, dt\right)^{2}~ \text{ where } \sigma(0) = x, \sigma(1) = y, \sigma \text{ is a horizontal path}\right\}, \notag
\end{align}
where as usual, $\inf(\emptyset):= \infty$. It is not immediately obvious that given any two points $x, y \in M$, that $d_{CC}(x,y) < \infty$; indeed, it would not be impossible to believe that perhaps there is no horizontal curve connecting $x$ and $y$. Yet, remarkably, H\"{o}rmander's condition is sufficient to ensure that any two points are connected by (a finite length) horizontal curve. In fact, even more is true.

\begin{thm}[Chow-Rashevski]
Suppose $\Horz$ satisfies H\"{o}rmander's condition in a neighborhood of every point in $M$. Then for any two points $x,y \in M$, $d_{CC}(x,y) < \infty$. Moreover, the topology on $M$ defined by $d_{CC}$ agrees with the original manifold topology of $M$.
\end{thm}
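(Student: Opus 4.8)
The plan is to prove the theorem in two stages: first establish connectivity, which gives finiteness of $d_{CC}$, and then establish the topological equivalence. For the first stage I would fix a point $x \in M$ and study the \emph{reachable set} $\mathcal{O}_x$ consisting of all points that can be joined to $x$ by a piecewise horizontal curve. These reachable sets partition $M$ into equivalence classes, each of which is, by construction, connected by horizontal curves. The crucial claim is that every reachable set is open; granting this, since $M$ is connected and is a disjoint union of the open sets $\mathcal{O}_x$, there can be only one such set, namely $M$ itself. This immediately yields $d_{CC}(x,y) < \infty$ for all $x,y$, since any two points lie in a common reachable set and are thus joined by a horizontal curve of finite length.

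To show that $\mathcal{O}_x$ is open I would use the bracket-generating hypothesis together with flows of horizontal vector fields. For a horizontal vector field $X$, flowing along $e^{tX}$ stays within the reachable set, and the mechanism for gaining new directions is the commutator of flows: for horizontal $X,Y$ the composition $e^{-sY}\circ e^{-sX}\circ e^{sY}\circ e^{sX}$ displaces the base point by $s^2[X,Y] + O(s^3)$, so that after the reparametrization $t = s^2$ one obtains a $C^1$ curve whose velocity at the base point is (up to sign) the Lie bracket $[X,Y]$. Iterating this construction realizes iterated brackets as tangent directions of curves lying entirely in $\mathcal{O}_x$. By Hörmander's condition (Assumption \ref{HCassump}) I can select iterated brackets $Z_1,\dots,Z_d$ of the frame $\{X_1,\dots,X_m\}$ whose values span $T_pM$ at a given $p$, and assemble the associated commutator flows into a map $\Phi\colon \mathbb{R}^d \to M$ whose image lies in $\mathcal{O}_x$ and whose differential at the origin sends the standard basis to $Z_1(p),\dots,Z_d(p)$.

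The main obstacle is that $\Phi$ is only $C^1$ rather than smooth, since each commutator direction enters through a fractional-power reparametrization of time, so the smooth inverse function theorem does not apply verbatim. I would handle this either by invoking the inverse function theorem in its $C^1$ form, after verifying that $d\Phi_0$ is the linear isomorphism just described, or by the standard device of evaluating the rank of the (smooth, unreparametrized) composition at a nearby nondegenerate parameter value and applying an open-mapping argument there. Either way the conclusion is that $\Phi$ maps a neighborhood of the origin onto a manifold-open neighborhood of $p$ contained in $\mathcal{O}_x$, which establishes openness and completes the first stage.

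For the topological equivalence I would compare $d_{CC}$ with the distance $d_g$ of any Riemannian metric $g$ on $M$ extending the sub-Riemannian inner product on $\Horz$. Since horizontal curves have $g$-length bounded above by their horizontal length, one gets $d_g \le d_{CC}$, hence $B_{CC}(x,r)\subseteq B_g(x,r)$; as the $g$-balls form a neighborhood basis for the manifold topology, this shows that every manifold-open set is $d_{CC}$-open. For the reverse inclusion I would reuse the map $\Phi$ from the first stage: it is continuous and covers a full manifold-neighborhood of each point, while the horizontal curves realizing $\Phi(t)$ have length tending to $0$ as $t \to 0$, so $d_{CC}(x,y)\to 0$ as $y\to x$ in the manifold topology. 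This places a manifold-neighborhood of $x$ inside each $B_{CC}(x,r)$, so every $d_{CC}$-open set is manifold-open, and the two topologies coincide. Throughout, the genuinely hard step is the openness of the reachable set; the topological comparison is then bookkeeping with the two inclusions.
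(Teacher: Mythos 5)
The paper offers no proof of this theorem: it is stated as classical and the reader is referred to \cite{MontgomeryBook2002} for details. Your proposal is a correct outline of precisely that standard argument --- openness of the reachable sets via commutator flows together with the bracket-generating hypothesis (with the $C^1$ reparametrization issue correctly identified and handled), followed by the two-sided comparison of $d_{CC}$ with a Riemannian distance --- so it reconstructs the proof the paper implicitly defers to rather than taking a different route.
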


\subsection{Hamilton-Jacobi Equations}
For physical reasons, we will commonly refer to $M$ as a \emph{configuration space}, vectors $X \in TM$ as \emph{velocity vectors}, and covectors $p \in T^*M$ as \emph{momentum vectors}. The Hamiltonian $H : T^*M \to \mathbb{R}$ is the (kinetic energy) map defined by
\begin{equation}\label{eqn.Hamiltonian}
H(x, p) = \frac{1}{2} \left. \langle \beta(p), \beta(p) \rangle\right|_x = \frac{1}{2} p_{i}p_{j} \beta^{ij}(x),
\end{equation}
where the second equality is a local expression with $p = \sum\limits_{i=1}^d p_i \, dx^i|_x$. A curve $p(t) = (x(t), p(t))$  in $T^{\ast}M$ is said to satisfy  the \emph{Hamilton-Jacobi equations} when

\begin{align}
&\dot{x}^i = \frac{\partial H}{\partial p_i} (x(t),p(t)), \label{eqn.HamiltonsEq}
\\
&\dot{p}_i = -\frac{\partial H}{\partial x^i} (x(t),p(t)). \notag
\end{align}

Note that with a starting position $x(0) = x \in M$ and \emph{momentum} $p(0) = p \in T^{\ast}_{x}M$, we can uniquely solve \eqref{eqn.HamiltonsEq} for some interval of time. The same can not be said if we are given an initial position $x(0) = x$ and horizontal \emph{velocity} $\dot{x}(0) = X \in H$; this is an artifact of the degeneracy of $\beta$, since $\beta^{-1}(X)$ is multi-valued, and there is no a priori canonical choice of which momentum $p \in \beta^{-1}(X)$ to choose.

Our final note on solutions to the Hamilton-Jacobi equations in the sub-Riemannian setting deals with completeness, see \cite[Theorem 7.1]{Strichartz1986a}.

\begin{thm}[Hopf-Rinow Theorem for sub-Riemannian manfiolds]\label{thm.HopfRinow}
 If $M$ is complete as a metric space with respect to $d_{CC}$, then for ever $x \in M$ and $p \in T_x^*M$, the solution of \eqref{eqn.HamiltonsEq} with initial conditions $x(0) = x$ and $p(0) = p$ is  defined for all times $t \geqslant 0$.
\end{thm}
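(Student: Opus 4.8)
The plan is to read \eqref{eqn.HamiltonsEq} as the flow of the Hamiltonian vector field $X_H$ associated with \eqref{eqn.Hamiltonian} on $T^*M$. Since the $\beta^{ij}$ are smooth, $X_H$ is a smooth vector field on $T^*M$, so each initial condition $(x,p)$ has a unique maximal solution $(x(t),p(t))$ on some interval $[0,T)$, and I want to show $T=\infty$. The only way a maximal solution can fail to extend past a finite $T$ is for the trajectory to eventually leave every compact subset of $T^*M$ as $t\to T^-$; hence it suffices to confine $(x(t),p(t))$ to a compact set on $[0,T)$. This splits into controlling the base point $x(t)\in M$ and then the momentum $p(t)$ in the fibers.

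The base point is handled by conservation of energy. Along any solution, $\frac{d}{dt}H(x(t),p(t)) = \frac{\partial H}{\partial x^i}\dot x^i + \frac{\partial H}{\partial p_i}\dot p_i = 0$ by \eqref{eqn.HamiltonsEq}, so $H\equiv E$ is constant. From \eqref{eqn.Hamiltonian} and \eqref{eqn.HamiltonsEq} we have $\dot x^i = \partial H/\partial p_i = \beta^{ij}p_j$, i.e.\ $\dot x = \beta(p)$ is horizontal with constant speed $|\dot x| = \langle \beta(p),\beta(p)\rangle^{1/2} = \sqrt{2E}$. Thus $x(\cdot)$ is a horizontal curve whose $d_{CC}$-length over $[s,t]$ is $\sqrt{2E}\,(t-s)$, so $d_{CC}(x(s),x(t)) \le \sqrt{2E}\,(t-s)$ for $0\le s\le t<T$. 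Hence $\{x(t)\}_{t<T}$ is $d_{CC}$-Cauchy and, by completeness of $(M,d_{CC})$, converges to some $x_T\in M$ while remaining in the closed ball $\overline{B}_{CC}(x,\sqrt{2E}\,T)$. Because the $d_{CC}$-topology agrees with the manifold topology (Chow--Rashevski) and $M$ is a locally compact length space, completeness forces closed bounded balls to be compact, so the base path lies in a fixed compact $K\subset M$.

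The delicate point, and the main obstacle, is to bound $p(t)$ over $K$: the equation $\dot p_i = -\tfrac12(\partial_{x^i}\beta^{jk})p_jp_k$ is \emph{quadratic} in $p$, and since $\beta$ is degenerate, energy conservation controls only the part of $p$ transverse to the annihilator $\Horz^{\perp}=\ker\beta$ and gives no direct bound on the ``vertical'' momentum; a naive Gronwall estimate on $|p|^2$ only sees the cubic term and permits finite-time blow-up. The observation that resolves this is that the nonlinearity is only \emph{apparently} quadratic. Fix a local horizontal frame $X_1,\dots,X_m$, complete it to a frame $X_1,\dots,X_m,E_{m+1},\dots,E_d$ on $K$, and for a vector field $Y$ set $P_Y(x,p):=p(Y(x))$, so that $H=\tfrac12\sum_{a,b}h^{ab}P_{X_a}P_{X_b}$ with $h^{ab}$ uniformly positive-definite on $K$. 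Using $\frac{d}{dt}P_Y=\{P_Y,H\}$ together with $\{P_Y,P_Z\}=-P_{[Y,Z]}$, one finds $\frac{d}{dt}P_Y = -\sum_{a,b}h^{ab}P_{X_a}P_{[Y,X_b]} - \tfrac12\sum_{a,b}(\partial_Y h^{ab})P_{X_a}P_{X_b}$. Energy conservation bounds every horizontal momentum $P_{X_a}$ on $K$ (since $\sum h^{ab}P_{X_a}P_{X_b}=2E$ with $h^{ab}$ uniformly positive-definite), while $P_{[Y,X_b]}$ is linear in $p$; hence each $\frac{d}{dt}P_Y$ grows at most linearly in $|p|$. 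Running this over the frame and comparing $|p|$ with a fixed reference metric on $K$ gives $\frac{d}{dt}|p|^2 \le C(|p|^2+1)$, and Gronwall confines $p(t)$ to a precompact set over $K$ on $[0,T)$. Therefore $(x(t),p(t))$ stays in a compact subset of $T^*M$, contradicting the maximality of a finite $T$; everything hinges on recognizing that the bracket-generating frame converts the nominally quadratic momentum growth into linear growth once the horizontal momenta are frozen by energy conservation.
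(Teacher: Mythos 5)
Your proof is correct in substance, but note that the paper does not actually prove this theorem: it is imported from Strichartz \cite[Theorem 7.1]{Strichartz1986a}, so what you have written is a self-contained reconstruction of an argument the paper outsources to a reference. Your two steps are the right ones. Conservation of $H$ gives $\dot x = \beta(p)$ of constant sub-Riemannian speed $\sqrt{2E}$, so on a maximal interval $[0,T)$ with $T<\infty$ the base curve is $d_{CC}$-Lipschitz, hence Cauchy, hence by completeness has compact closure $K$ in $M$; and the identity $\tfrac{d}{dt}P_Y=\{P_Y,H\}$ together with $H=\tfrac12\sum_a P_{X_a}^2$ for an orthonormal horizontal frame converts the a priori quadratic momentum equation into one that is only linear in the unbounded (vertical) momenta, because energy conservation freezes every horizontal momentum at size at most $\sqrt{2E}$; Gr\"{o}nwall then confines $p(t)$ to a compact set and the escape lemma for the smooth Hamiltonian vector field on $T^{*}M$ finishes. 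This correctly isolates the one genuinely sub-Riemannian difficulty, namely that $H$ controls only the component of $p$ transverse to $\ker\beta$. Two small points to tighten: (i) a compact $K\subset M$ need not carry a single global frame, so the momentum estimate should be run over finitely many frame neighborhoods covering $K$ (equivalently, derive the differential inequality for $|p|^2_{g_0}$ with respect to a fixed reference metric $g_0$ locally and take the worst constant) --- routine, but worth saying; (ii) the appeal to Hopf--Rinow--Cohn-Vossen properness of balls is superfluous, since your Cauchy argument already produces a limit $x_T$ and hence a compact closure of the base trajectory. With those remarks, your argument stands as a complete proof of the statement the paper merely cites.
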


\section{Sub-Riemannian analogues of the Laplace-Beltrami operator}
We start by recalling how the Laplace-Beltrami operator $\Delta_{LB}$ on an oriented $d$-dimensional Riemannian manifold $(M,g)$ is usually defined. First one constructs the Riemannian volume
\[
\w := \sqrt{|g|}\, dx^1 \wedge \cdots \wedge dx^d,
\]
and the respective divergence of vector fields
\[
\diverg^{\w}(X) = \sum\limits_{k=1}^d \frac{1}{\sqrt{|g|} }\frac{\partial}{\partial x^k} \sqrt{|g|}\, X^k.
\]
Here, as usual, $|g|$ is the determinant of the metric. From this the Laplace-Beltrami operator is defined as $\Delta_{LB} = \diverg^{\w} \grad$, which locally is given by
\begin{equation}\label{eqn.LB}
\Delta_{LB} = \sum\limits_{i,j=1}^d \left\{ g^{ij} \, \frac{\partial^2}{\partial x^i \partial x^j} - \sum\limits_{k=1}^d \Gamma^{ijk} g_{ij} \frac{\partial}{\partial x^k}\right\},
\end{equation}
where
\begin{equation}\label{eqn.raisedChrisR}
 \Gamma^{ijk} := - \frac{1}{2} \sum\limits_{l=1}^d\left\{ g^{il} \frac{\partial g^{jk}}{\partial x^l} + g^{j l } \frac{\partial g^{i k }}{\partial x^l} - g^{l k} \frac{\partial g^{ij}}{\partial x^l} \right\}
 \end{equation}
 are the raised Christoffel symbols.

There are multiple problems when we try to use this approach in the sub-Riemannian setting to define a canonical analogue of the Laplace-Beltrami operator. Without a Riemannian metric, the corresponding Riemannian volume form and hence the divergence is left undefined since the $\sqrt{|g|}$ term has no canonical interpretation in general. We could extend the sub-Riemannian metric to a Riemannian metric and use the extension to give meaning to $\sqrt{|g|}$, but generally no one extension seems to stand out as the canonical choice. Moreover, if we just apply \eqref{eqn.LB} with some metric extension $g$,  we would simply be considering the Laplace-Beltrami operator associated to the Riemannian manifold $(M,g)$, rather than to the original sub-Riemannian structure.

While perhaps there is no general best choice for an analogue of the Laplace-Beltrami operator, there are several candidates which merit considering. The remainder of this section will be dedicated to exploring common features of  such operators.


\subsection{Sub-Laplacians}\label{sec: Sub-Laplacians}

\begin{df} \label{defi.subLaplace}
A second order differential operator $\Delta$ defined on $C^{\infty}\left( M \right)$ will be called a \emph{sub-Laplacian} when for every $x \in M$ there is a neighborhood $U$ of $x$ and a collection of smooth vector fields $\{X_0, X_1, ..., X_m\}$ defined on $U$  such that $\{X_1, ..., X_m\}$ are orthonormal with respect to the sub-Riemannian metric and
\[
\Delta = \sum\limits_{k=1}^m X_k^2 + X_0.
\]
\end{df}

By the classical theorem of L.~H\"{o}rmander in \cite[Theorem 1.1]{Hormander1967a} Assumption \ref{HCassump} guarantees that any sub-Laplacian is hypoelliptic.  We now work towards a local coordinate classification of sub-Laplacians, resulting in Corollary \ref{cor.LLclass}. We start with a lemma.
\begin{lem}\label{l.5.2}
Suppose that $p_1, p_2$ are two one-forms  and that $\{X_i\}_{i=1}^m$ is an orthonormal horizontal frame within some neighborhood $U \subset M$. Then  within $U$,
\[
\langle \beta(p_1), \beta(p_2) \rangle = \sum\limits_{k=1}^m  \langle \beta(p_1), X_k \rangle\langle X_k, \beta(p_2) \rangle = \sum\limits_{k=1}^m p_1(X_k)p_2(X_k).
\]
\end{lem}
\begin{proof}
Since $\beta(p_i)$ yields a horizontal vector field ($i = 1, 2$), then within $U$, $\beta(p_i) = \sum_{k=1}^m \langle \beta(p_i), X_k \rangle X_k$. Hence
\[
\langle \beta(p_1), \beta(p_2) \rangle = \Big\langle  \sum_{k=1}^m \langle \beta(p_i), X_k \rangle X_k, \beta(p_2) \Big\rangle  = \sum\limits_{k=1}^m  \langle \beta(p_1), X_k \rangle\langle X_k, \beta(p_2) \rangle.
\]
This proves the first equality; the second equality is shown by defining  $\beta$ by $\langle \beta(p), X \rangle = p(X)$ for any covector $p$ and vector $X$.
\end{proof}

From Lemma \ref{l.5.2} we can conclude the following.
\begin{prop} \label{prop.sumsqbetaij}
Let $\{X_i\}_{i=1}^m$ be a local orthonormal horizontal frame. In local coordinates
\[
X_1^2 + \cdots + X_m^2 = \sum\limits_{i,j=1}^d \beta^{ij} \frac{\partial^2}{\partial x^i \partial x^j} + \text{ first order terms}
\]
As usual, $\beta^{ij} := \langle \beta(dx^i), \beta(dx^j) \rangle$.
\end{prop}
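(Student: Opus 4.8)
The plan is to expand each $X_k$ in the coordinate frame and track only the top-order part of $X_k^2$. Writing $X_k = \sum_{i=1}^d X_k^i \, \partialx{i}$ with coefficient functions $X_k^i := dx^i(X_k)$, I would compute the composition $X_k^2 = X_k \circ X_k$ by the Leibniz rule. Applying $X_k$ once produces $\sum_j X_k^j \, \partialx{j}$; applying $X_k$ again differentiates both the coefficient functions $X_k^j$ (which contributes only first order terms) and the derivatives $\partialx{j}$ (which contributes the second order terms). Concretely, $X_k^2 = \sum_{i,j=1}^d X_k^i X_k^j \frac{\partial^2}{\partial x^i \partial x^j} + \sum_{j=1}^d \big( X_k X_k^j\big) \partialx{j}$, so the entire second sum is absorbed into the advertised first order terms.

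Summing over $k$ and collecting, the coefficient of $\frac{\partial^2}{\partial x^i \partial x^j}$ in $\sum_{k=1}^m X_k^2$ is $\sum_{k=1}^m X_k^i X_k^j$, while all remaining contributions are of first order. It then remains only to identify this coefficient with $\beta^{ij}$.

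For that last step I would invoke Lemma \ref{l.5.2} with the one-forms $p_1 = dx^i$ and $p_2 = dx^j$. The lemma yields $\langle \beta(dx^i), \beta(dx^j) \rangle = \sum_{k=1}^m dx^i(X_k)\, dx^j(X_k)$, and since $dx^i(X_k) = X_k^i$ by definition of the coefficients, the right-hand side is exactly $\sum_{k=1}^m X_k^i X_k^j$. By Notation \ref{nota.subR1} the left-hand side is $\beta^{ij}$, which closes the identification and completes the argument.

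I do not anticipate a genuine obstacle here: the computation is a routine Leibniz expansion, and the only conceptual point—that the sum of products of frame coefficients reproduces $\beta^{ij}$—is precisely the content of Lemma \ref{l.5.2}. The one place to be careful is the bookkeeping that isolates the second order part; in particular, one should note that the terms arising from differentiating the coefficients are all first order, so the claimed second order symbol $\sum_k X_k^i X_k^j$ is correct independent of the ordering of the mixed partials.
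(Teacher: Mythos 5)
Your proposal is correct and follows essentially the same route as the paper: expand each $X_k$ in the coordinate frame, isolate the second order part of $X_k^2$ via the Leibniz rule, and identify the resulting coefficient $\sum_{k=1}^m dx^i(X_k)\,dx^j(X_k)$ with $\beta^{ij}$ using Lemma \ref{l.5.2}. No gaps.
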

\begin{proof}
Let $k \in \{1,...,m\}$. We have $X_k = \sum\limits_{i=1}^d dx^i(X_k) \frac{\partial}{\partial x^i} = \sum\limits_{i=1}^d \langle \beta(dx^i),X_k\rangle \frac{\partial}{\partial x^i}$, where again the last equality is simply through the definition of $\beta$. Hence
\begin{align*}
\sum_{k=1}^m X_k^2 &= \sum_{k=1}^m \sum\limits_{i,j=1}^d \bigg( \langle \beta(dx^i),X_k\rangle \frac{\partial}{\partial x^i} \bigg) \bigg( \langle X_k, \beta(dx^j)\rangle \frac{\partial}{\partial x^j} \bigg) \\
&= \sum_{k=1}^m \sum\limits_{i,j=1}^d \langle \beta(dx^i), X_k \rangle \langle \beta(dx^j), X_k \rangle \frac{\partial^2}{\partial x^i \partial x^j} + \text{ first order terms}
\end{align*}
Summing over $k$ and using the previous lemma, we get
\begin{align*}
\sum_{k=1}^m X_k^2 &= \sum_{i,j=1}^d \langle \beta(dx^i), \beta(dx^j) \rangle \frac{\partial^2}{\partial x^i \partial x^j} + \text{ first order terms} \\
&= \sum_{i,j=1}^d \beta^{ij} \frac{\partial^2}{\partial x^i \partial x^j} + \text{ first order terms}.
\end{align*}
This concludes the proof.
\end{proof}

We immediately deduce the following.
\begin{cor} \label{cor.LLclass}
 $\Delta$ is a sub-Laplacian if and only if there is a smooth vector field $X_0$ such that locally
\[
\Delta = \sum\limits_{i,j=1}^d \beta^{ij} \frac{\partial^2}{\partial x^i \partial x^j} + X_0.
\]
In particular, the principal symbol of any sub-Laplacian has the form $\beta^{ij} \xi_i \xi_j$.
\end{cor}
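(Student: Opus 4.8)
The plan is to read off both implications directly from Proposition \ref{prop.sumsqbetaij}, since the essential second-order computation has already been carried out there. The one observation that makes everything routine is that a first-order differential operator with no zeroth-order term is exactly a smooth vector field; consequently the ``first order terms'' appearing in Proposition \ref{prop.sumsqbetaij} may be absorbed into, or peeled off from, the vector field $X_0$.

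For the forward direction I would begin from Definition \ref{defi.subLaplace}: if $\Delta$ is a sub-Laplacian, then near any point it has the form $\Delta = \sum_{k=1}^m X_k^2 + X_0$ for a local orthonormal horizontal frame $\{X_k\}_{k=1}^m$ and a smooth vector field $X_0$. Applying Proposition \ref{prop.sumsqbetaij} to the term $\sum_{k=1}^m X_k^2$ rewrites it as $\sum_{i,j=1}^d \beta^{ij}\frac{\partial^2}{\partial x^i \partial x^j}$ plus a first-order remainder $Z$. Since $Z$ is a smooth vector field and $X_0$ is a smooth vector field, their sum $X_0 + Z$ is again a smooth vector field, and relabeling it yields $\Delta$ in the asserted form.

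For the converse, given $\Delta = \sum_{i,j=1}^d \beta^{ij}\frac{\partial^2}{\partial x^i \partial x^j} + X_0$ with $X_0$ a smooth vector field, I would fix a point of $M$ and choose a local orthonormal horizontal frame $\{X_k\}_{k=1}^m$; such a frame exists by applying the Gram--Schmidt process fiberwise to any local horizontal frame, a procedure that preserves smoothness. Proposition \ref{prop.sumsqbetaij} then gives $\sum_{k=1}^m X_k^2 = \sum_{i,j=1}^d \beta^{ij}\frac{\partial^2}{\partial x^i \partial x^j} + Z$ for some first-order remainder $Z$, and subtracting produces $\Delta = \sum_{k=1}^m X_k^2 + (X_0 - Z)$, which is precisely the form required by Definition \ref{defi.subLaplace}. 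The statement about the principal symbol then follows immediately: the second-order part of $\Delta$ is $\sum_{i,j=1}^d \beta^{ij}\frac{\partial^2}{\partial x^i \partial x^j}$, so the principal symbol reads off its coefficients as $\beta^{ij}\xi_i\xi_j$.

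I expect no genuine obstacle in this argument, as the weight of the work resides in Proposition \ref{prop.sumsqbetaij}. The only two points deserving a word of care are the existence of a smooth local orthonormal horizontal frame in the converse direction, which the Gram--Schmidt argument supplies, and the bookkeeping verifying that the collected first-order contributions really assemble into a single vector field rather than a more general first-order operator. Both are routine once stated explicitly.
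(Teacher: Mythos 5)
Your argument is correct and is exactly the deduction the paper intends: the corollary is stated there as an immediate consequence of Proposition \ref{prop.sumsqbetaij}, and your two directions (absorbing the first-order remainder into $X_0$, and conversely choosing a local orthonormal horizontal frame and subtracting) spell out precisely that reasoning. The two points you flag --- existence of a smooth local orthonormal horizontal frame and the fact that the remainder is a genuine vector field --- are handled correctly.
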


\subsection{Lie Groups}
For this section we assume $M=G$ is a Lie group with Lie algebra $\mathfrak{g} = T_e G$. We consider what can be inferred by imposing structure on $\mathcal{H}$ natural to the Lie group.

\begin{as} \label{property.HLI}
For any $v \in \mathcal{H}_e\subset \mathfrak{g}$, the corresponding (unique) left-invariant vector field $X$ defined by $X_x = (L_x)_{\ast} v$ is horizontal.
\end{as}

\begin{as} \label{property.metricLI}
The sub-Riemannian metric $\langle \cdot, \cdot \rangle$ is left-invariant. That is, for any two left-invariant horizontal vector fields $X$ and $Y$, $\langle X, Y \rangle \equiv \langle X_e, Y_e \rangle$.
\end{as}

We immediately get the following Lemma.
\begin{lem}
If Assumption \ref{property.HLI} holds and  $\{v_1, ..., v_m\} \subset \mathcal{H}_e$ is a basis of $\mathcal{H}_e$, then the corresponding left invariant vector fields $\{X_1, ..., X_m\}$ form a (global) horizontal frame. If further Assumption \ref{property.metricLI} holds and $\{v_1, ..., v_m\}$ are orthonormal, then the collection $\{X_1, ..., X_m\}$ is an  orthonormal  horizontal frame.
\end{lem}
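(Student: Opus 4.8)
The plan is to establish the two claims in turn, using the fact that left translation induces a linear isomorphism on tangent spaces together with the two standing assumptions.

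For the first claim, I would first observe that each $X_i$ is horizontal: since $v_i \in \mathcal{H}_e$, Assumption \ref{property.HLI} guarantees that its left-invariant extension $X_i$ is a horizontal vector field, so $(X_i)_x \in \mathcal{H}_x$ for every $x \in G$. It then remains to verify that at each point the $m$ vectors $\{(X_1)_x, \dots, (X_m)_x\}$ actually form a fiberwise basis of $\mathcal{H}_x$. The key observation is that $(L_x)_{\ast} : T_e G \to T_x G$ is a linear isomorphism, being the differential of the diffeomorphism $L_x$, so it carries the linearly independent set $\{v_1, \dots, v_m\}$ to the linearly independent set $\{(X_i)_x = (L_x)_{\ast} v_i\}$. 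Thus at each $x$ we have $m$ linearly independent vectors all lying in the $m$-dimensional space $\mathcal{H}_x$, and a dimension count forces them to span $\mathcal{H}_x$. Since left-invariant vector fields are smooth, this yields a smooth global horizontal frame.

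For the second claim, I would simply invoke left-invariance of the metric. Each $X_i$ is by construction a left-invariant horizontal vector field, so Assumption \ref{property.metricLI} gives $\langle X_i, X_j \rangle \equiv \langle (X_i)_e, (X_j)_e \rangle = \langle v_i, v_j \rangle$. If the $\{v_i\}$ are orthonormal in $\mathcal{H}_e$, this equals $\delta_{ij}$ at every point, so $\{X_i\}$ is an orthonormal horizontal frame.

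There is no substantial obstacle here; the only step requiring a moment's care is the passage from linear independence to spanning in the first claim, which relies essentially on matching the number of frame vectors to the rank $m$ of $\mathcal{H}$. Everything else follows directly from the diffeomorphism property of $L_x$ and the two assumptions.
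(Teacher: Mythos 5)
Your proof is correct and is exactly the routine argument the paper has in mind when it states the lemma without proof ("We immediately get the following Lemma"): horizontality from Assumption \ref{property.HLI}, linear independence via the isomorphism $(L_x)_{\ast}$ plus a dimension count, and orthonormality from Assumption \ref{property.metricLI}. Nothing further is needed.
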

Note that the next result does not assume H\"{o}rmander's condition (Assumption \ref{HCassump}).
\begin{thm} \label{thm.leftinvsumsq}
Assume that Assumption \ref{property.HLI} holds. Suppose that $\{v_1, ..., v_m\}$ and $\{r_1, ..., r_m\}$ are two orthonormal bases of $\mathcal{H}_e$ with corresponding left invariant vector fields $\{X_1, ..., X_m\}$ and $\{Y_1, ..., Y_m\}$ respectively. Then, $\sum\limits_{k=1}^m X_k^2 = \sum\limits_{k=1}^m Y_k^2$.
\end{thm}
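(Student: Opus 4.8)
The plan is to reduce the equality of the two second-order operators to a purely linear-algebraic identity coming from the orthogonality of the change-of-basis matrix. The starting observation is that $\{v_1,\dots,v_m\}$ and $\{r_1,\dots,r_m\}$ are two orthonormal bases of the \emph{same} inner product space $(\mathcal{H}_e,\langle\cdot,\cdot\rangle|_e)$, so there is an orthogonal matrix $O=(O_{kl})\in O(m)$ with $r_k=\sum_{l=1}^m O_{kl}\,v_l$ for each $k$.

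The decisive step is to transfer this relation from the single fiber $\mathcal{H}_e$ to the global vector fields. Because the left-invariant extension $v\mapsto X$, $X_x=(L_x)_\ast v$, is $\mathbb{R}$-linear in $v$ (pushforward by a fixed diffeomorphism is linear), we obtain $Y_k|_x=(L_x)_\ast\big(\sum_l O_{kl}v_l\big)=\sum_l O_{kl}(L_x)_\ast v_l=\sum_l O_{kl}X_l|_x$ for every $x\in G$; that is, $Y_k=\sum_l O_{kl}X_l$ as vector fields, with coefficients $O_{kl}$ that are \emph{constant} in $x$. Assumption \ref{property.HLI} is exactly what guarantees that all these vector fields are horizontal, so the expressions make sense; note that H\"{o}rmander's condition is never invoked.

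With constant coefficients in hand, the remaining computation is an exact operator identity, with no Leibniz correction terms since the $O_{kl}$ are scalars. I would expand
\[
\sum_{k=1}^m Y_k^2=\sum_{k=1}^m\Big(\sum_{l=1}^m O_{kl}X_l\Big)\Big(\sum_{l'=1}^m O_{kl'}X_{l'}\Big)=\sum_{l,l'=1}^m\Big(\sum_{k=1}^m O_{kl}O_{kl'}\Big)X_l X_{l'},
\]
and then invoke orthogonality: $\sum_k O_{kl}O_{kl'}=(O^\top O)_{ll'}=\delta_{ll'}$, using that for a square matrix $OO^\top=I$ is equivalent to $O^\top O=I$. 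Hence every off-diagonal term $X_l X_{l'}$ with $l\neq l'$ drops out upon summing over $k$, and what survives is $\sum_{l=1}^m X_l^2$, which is the claim.

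I do not expect a genuine obstacle here; the single point that must be handled with care is the constancy of the coefficients $O_{kl}$. It is precisely this constancy --- a consequence of left-invariance --- that allows the scalars to commute past $X_l$ and makes the cross terms cancel cleanly, so that the first-order parts of the two operators agree automatically as well, not merely their principal symbols.
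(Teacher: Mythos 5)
Your proposal is correct and follows essentially the same route as the paper's own proof: both express one left-invariant frame as a constant orthogonal linear combination of the other (using linearity of the left-invariant extension), expand the sum of squares, and cancel the cross terms via $O^{\top}O=\operatorname{Id}$. The point you flag as the crux --- constancy of the change-of-basis coefficients, which kills any first-order Leibniz corrections --- is exactly the point the paper's argument relies on as well.
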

\begin{proof}
Let $\Theta$ be the $m \times m$ orthogonal matrix with entries ${\theta_i}^j$ such that $v_i = \sum\limits_{j=1}^m {\theta_i}^j r_j$ for each $i = 1, 2, ..., m$. Arguing by the uniqueness of left invariant vector fields, this means that $X_i = \sum\limits_{j=1}^m {\theta_i}^j Y_j$. Symbolically, if ${\bf X} = (X_1, ..., X_m)^{tr}$ and ${\bf Y} = (Y_1, ..., Y_m)^{tr}$, then
\[
{\bf X} = \Theta {\bf Y}.
\]
From this, arguing formally,
\[
\sum\limits_{k=1}^m X_k^2 = {\bf X}^{tr} {\bf X} = \big({\bf Y}^{tr} \Theta^{tr}\big)\big( \Theta {\bf Y} \big) ={\bf Y}^{tr} {\bf Y} = \sum\limits_{k=1}^m Y_k^2,
\]
where the penultimate equality is due to the orthogonality of $\Theta$. In fact, this proof is rigorous upon writing
\[
\sum_{k=1}^m X_k^2 = \sum_{k=1}^m \Big(\sum\limits_{i=1}^m {\theta_k}^i Y_i \Big) \Big(\sum\limits_{j=1}^m {\theta_k}^j Y_j \Big) = \sum_{i,j=1}^m \Big(\sum\limits_{k=1}^m {\theta_k}^i {\theta_k}^j \Big) Y_i Y_j
\]
and realizing that $\sum\limits_{k=1}^m {\theta_k}^i {\theta_k}^j$ is the $ij$th entry of $\Theta^{tr} \Theta = \operatorname{Id}$.
\end{proof}

\begin{example}[A non-example]
In Section \ref{example.heisenberg} below, we introduce the Heisenberg group $\mathbb{H}$ endowed with the the left invariant frame $\{X,Y,Z\}$ defined by $X = \partial_x - \frac{1}{2} y \partial_z,$ $Y = \partial_y + \frac{1}{2} x \partial_z,$ and $Z = \partial_z.$
The horizontal distribution is given by $\mathcal{H} = \operatorname{span}\{X,Y\}$ with sub-Riemannian metric defined so that $\{X,Y\}$ is an orthonormal horizontal frame.

Let us define the new horizontal frame $\{X', Y'\}$ by
\begin{align*}
X' = \cos z \, X - \sin z \, Y \\
Y' = \sin z \, X + \cos z \, Y.
\end{align*}
You will recognize this as a $z$-dependent rotation of the $\{X, Y\}$ frame in $\mathcal{H}$. In particular, $\{X', Y'\}$ is still an orthonormal frame for $\mathcal{H}$ with respect to the sub-Riemannian metric, yet it is not a left-invariant frame. We find
\begin{align*}
X' = \cos z \, \partial_x -\sin z \, \partial_y -  \frac{1}{2}(x \sin z + y \cos z)\, \partial_z
\end{align*}
and
\begin{align*}
Y' = \sin z \, \partial_x + \cos z \, \partial_y + \frac{1}{2} ( x \cos z - y \sin z ) \, \partial_z.
\end{align*}
Therefore
\[
X^2 + Y^2 =  \frac{\partial^2}{\partial x^2} + \frac{\partial^2}{\partial y^2} + \frac{1}{4} (x^2 + y^2) \frac{\partial^2}{\partial z^2} - \frac{1}{2} y \frac{\partial^2}{\partial x \partial z} + \frac{1}{2} x \frac{\partial^2}{\partial y \partial z}
\]
and
\[
(X')^2 + (Y')^2 = X^2 + Y^2 + \tfrac{1}{2}\, x\, \partial_x + \tfrac{1}{2}\, y\, \partial_y.
\]
In particular, $X^2+Y^2 \neq (X')^2 +(Y')^2$.
\end{example}

Observe that this example illustrates that there is little chance of recovering a statement such as Theorem \ref{thm.leftinvsumsq} in a more general setting, where left-invariance has no analogue. However, when we are fortunate enough to have Lie structure, we get as a corollary the following.
\begin{thm}\label{t.5.9}
Assume that both Assumptions \ref{property.HLI} and \ref{property.metricLI} hold and let $\Delta$ be a sub-Laplacian on $G$. Then there is a unique smooth vector field $X_{\Delta}$ such that given any orthonormal horizontal frame $\{X_1, ..., X_m\}$ of left invariant vector fields,
\[ \Delta = \sum\limits_{k=1}^m X_k^2 + X_{\Delta}.\]
\end{thm}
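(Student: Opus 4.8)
The plan is to fix one global left-invariant orthonormal frame, define $X_{\Delta}$ as the difference between $\Delta$ and the associated sum of squares, and then verify two things: that this difference is genuinely a smooth vector field, and that it is independent of the chosen frame. First I would invoke the preceding lemma: starting from an orthonormal basis $\{v_1,\dots,v_m\}$ of $\Horz_e$, the corresponding left-invariant vector fields $\{X_1,\dots,X_m\}$ form a global orthonormal horizontal frame. I then set
\[
X_{\Delta} := \Delta - \sum_{k=1}^m X_k^2,
\]
which is a globally defined second-order operator on $C^{\infty}(G)$.

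The main step is to show that $X_{\Delta}$ is in fact a smooth vector field, that is, a first-order operator with no zeroth-order part. Since this is a local question, I would work in a chart around an arbitrary point. There, by Definition \ref{defi.subLaplace}, $\Delta = \sum_{j=1}^m (\tilde X_j)^2 + \tilde X_0$ for some local orthonormal horizontal frame $\{\tilde X_j\}$ and smooth vector field $\tilde X_0$. Applying Proposition \ref{prop.sumsqbetaij} to both $\{\tilde X_j\}$ and the left-invariant frame $\{X_k\}$ shows that $\sum_j (\tilde X_j)^2$ and $\sum_k X_k^2$ have the same second-order part $\sum_{i,j}\beta^{ij}\,\frac{\partial^2}{\partial x^i\partial x^j}$; this is also immediate from the principal-symbol statement of Corollary \ref{cor.LLclass}. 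Hence the second-order terms cancel in the difference and $X_{\Delta}$ is locally first-order. Since $\Delta$, being locally $\sum_j (\tilde X_j)^2 + \tilde X_0$, annihilates constants, and $\sum_k X_k^2$ does as well, $X_{\Delta}$ annihilates constants and therefore carries no zeroth-order term. Thus $X_{\Delta}$ is locally, and so globally, a smooth vector field.

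Finally I would establish frame-independence, which is where Theorem \ref{thm.leftinvsumsq} does the real work and yields uniqueness. Let $\{Y_1,\dots,Y_m\}$ be any other orthonormal horizontal frame of left-invariant vector fields. By left-invariance of the metric (Assumption \ref{property.metricLI}), orthonormality at $e$ is equivalent to orthonormality everywhere, so $\{Y_1,\dots,Y_m\}$ arises from an orthonormal basis of $\Horz_e$. Theorem \ref{thm.leftinvsumsq} then gives $\sum_k X_k^2 = \sum_k Y_k^2$, whence $\Delta - \sum_k Y_k^2 = \Delta - \sum_k X_k^2 = X_{\Delta}$. Thus the single vector field $X_{\Delta}$ represents $\Delta$ in every left-invariant orthonormal frame, and uniqueness is immediate, since $\Delta = \sum_k X_k^2 + X_{\Delta} = \sum_k X_k^2 + X_{\Delta}'$ forces $X_{\Delta} = X_{\Delta}'$.

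I expect the only subtle point to be the passage from the local sub-Laplacian representation to a globally well-defined vector field: the local frames $\{\tilde X_j\}$ need not be left-invariant and will differ from chart to chart, so one must be careful that it is the globally defined operator $\Delta - \sum_k X_k^2$, rather than any chart-dependent expression, whose vector-field nature is being checked locally. Everything else reduces cleanly to Proposition \ref{prop.sumsqbetaij} and Theorem \ref{thm.leftinvsumsq}.
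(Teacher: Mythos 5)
Your proposal is correct and follows essentially the same route as the paper's proof: define $X_{\Delta} = \Delta - \sum_{k=1}^m X_k^2$, observe via Proposition \ref{prop.sumsqbetaij} (equivalently Corollary \ref{cor.LLclass}) that the second-order parts cancel so the difference is a vector field, and invoke Theorem \ref{thm.leftinvsumsq} for independence of the left-invariant orthonormal frame. You simply spell out more explicitly the points the paper leaves implicit, namely the absence of a zeroth-order term and the reduction of an arbitrary left-invariant orthonormal frame to an orthonormal basis of $\Horz_e$ via Assumption \ref{property.metricLI}.
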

\begin{proof}
We established in Proposition \ref{prop.sumsqbetaij} that $\Delta = \sum\limits_{k=1}^m X_k^2 + \text{ first order terms}$. Let $D_{\Delta} = \Delta - \sum\limits_{k=1}^m X_k^2$.  If $\{Y_1, ..., Y_m\}$ is another orthonormal horizontal frame of left invariant vector fields, then Theorem \ref{thm.leftinvsumsq} implies that $D_{\Delta} = \Delta - \sum\limits_{k=1}^m Y_k^2$. From this, the conclusion follows.
\end{proof}

As we see below, meaningful choices for an analogue of the Laplace-Beltrami operator on a sub-Riemannian manifold are sub-Laplacians. However, as our work thus far illustrates, there is no debate about what the second order terms should be, rather it is the first order terms that distinguish one choice from another.

\section{$\diverg^{\w}\gradH$ and the sum of squares operators}\label{sec: divgrad}

\begin{df}
The \emph{horizontal gradient} of a smooth function $f: M \to \mathbb{R}$, denoted $\gradH f$,  is a horizontal vector field defined such that for all $X \in \mathcal{H}$,
\[
\langle \gradH f, X\rangle = X(f).
\]
\end{df}
One can readily check that $\gradH f = \beta(df)$ where $df$ is the standard exterior derivative; that is, $\gradH f$ is the \emph{horizontal dual} of $df$. From this, it follows that locally $\gradH f = \beta^{ij} \frac{\partial f}{\partial x^i} \frac{\partial}{\partial x^j}$. Moreover, given an orthonormal horizontal frame $\{X_1, ..., X_m\}$,
\[
\gradH f = \sum\limits_{j=1}^m X_j(f) X_j.
 \]

Assume that $M$ is orientable and $\w$ is some volume form on $M$ locally given by $\w = \tau dx^1 \wedge \cdots \wedge dx^d$; here $\tau :  M \to \mathbb{R}$ is positive and smooth. Using standard results in geometry, the divergence of a vector field $X$ with respect to $\w$ is
\begin{equation}
\diverg^{\w}(X) = \sum\limits_{i=1}^d \left\{ \frac{X^i}{\tau}\, \frac{\partial \tau}{\partial x^i} + \frac{\partial X^i}{\partial x^i}\right\} \label{eqn.divagainstform}
\end{equation}
Replacing $X$ with $\gradH f$, we find
\begin{align*}
\diverg^{\w}(\operatorname{grad}_{\mathcal{H}}f) = \sum\limits_{i, j =1}^d \left\{ \frac{\beta^{ij}}{\tau}\, \frac{\partial \tau}{\partial x^i} \frac{\partial}{\partial x^j} + \beta^{ij}\frac{\partial^2}{\partial x^i \partial x^j} + \frac{\partial \beta^{ij}}{\partial x^i} \frac{\partial}{\partial x^j}\right\} f
\end{align*}
which yeilds the following local formula for the operator $\diverg^{\w} \operatorname{grad}_{\mathcal{H}}$,
\begin{equation} \label{eqn.divgradH1}
\diverg^{\w}\operatorname{grad}_{\mathcal{H}} = \sum\limits_{i,j=1}^d \left\{ \beta^{ij} \frac{\partial^2}{\partial x^i \partial x^j} + \left[ \frac{\beta^{ij}}{\tau}\, \frac{\partial \tau}{\partial x^i} + \frac{\partial \beta^{ij}}{\partial x^i} \right] \frac{\partial}{\partial x^j} \right\}
\end{equation}
Comparing \eqref{eqn.divgradH1} with Corollary \ref{cor.LLclass} immediately leads to

\begin{cor}
$\diverg^{\w} \gradH$ is a sub-Laplacian.
\end{cor}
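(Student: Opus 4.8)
The plan is to read the claim directly off the local formula \eqref{eqn.divgradH1} together with the classification in Corollary \ref{cor.LLclass}. That corollary asserts that an operator is a sub-Laplacian precisely when, in local coordinates, it has the shape $\sum_{i,j=1}^d \beta^{ij}\,\partial^2/\partial x^i\partial x^j$ plus a smooth vector field. The formula \eqref{eqn.divgradH1} already exhibits exactly this shape for $\diverg^{\w}\gradH$: its second-order part is the required $\sum_{i,j}\beta^{ij}\,\partial^2/\partial x^i\partial x^j$, while the remaining terms are first order. Thus the entire content of the proof is to confirm that these first-order terms assemble into a genuine smooth vector field $X_0$, after which one simply invokes Corollary \ref{cor.LLclass} (or, equivalently, Definition \ref{defi.subLaplace}).

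First I would fix a point of $M$ with a coordinate chart around it and set $X_0 := \diverg^{\w}\gradH - \sum_{i,j=1}^d \beta^{ij}\,\partial^2/\partial x^i\partial x^j$; by \eqref{eqn.divgradH1} this is the first-order operator whose $\partial/\partial x^j$-coefficient is $\sum_{i=1}^d \big( \tfrac{\beta^{ij}}{\tau}\,\partial\tau/\partial x^i + \partial\beta^{ij}/\partial x^i \big)$. These coefficients are smooth, since the $\beta^{ij}$ are smooth by Notation \ref{nota.subR1} and $\tau$ is smooth and strictly positive by hypothesis, so no division-by-zero occurs. Moreover $X_0$ carries no zeroth-order term, because $\diverg^{\w}\gradH$ annihilates constants (as $\gradH c = \beta(dc)=0$) and so does the pure second-order part.

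The one point deserving care is that $X_0$ is truly a coordinate-free vector field rather than a chart-dependent list of coefficients, since $\sum_{i,j}\beta^{ij}\,\partial^2/\partial x^i\partial x^j$ is not by itself an invariantly defined operator. I would settle this by comparing against a local orthonormal horizontal frame: choosing such a frame $\{X_1,\dots,X_m\}$ near the point, Proposition \ref{prop.sumsqbetaij} shows $\sum_{k=1}^m X_k^2$ has the very same second-order part $\sum_{i,j}\beta^{ij}\,\partial^2/\partial x^i\partial x^j$. Hence $\diverg^{\w}\gradH - \sum_{k=1}^m X_k^2$ is the difference of two intrinsically defined operators whose second-order terms cancel and which both annihilate constants; a first-order operator with no constant term is exactly a vector field, and it agrees with the $X_0$ computed above. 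Writing $\diverg^{\w}\gradH = \sum_{k=1}^m X_k^2 + X_0$ then matches Definition \ref{defi.subLaplace} verbatim, completing the proof. I expect no genuine obstacle here — the computation is routine once this invariance is noted, and the only thing to guard against is mistaking the local coefficient expression for a frame-dependent object.
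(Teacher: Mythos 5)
Your proof is correct and follows essentially the same route as the paper, which obtains the corollary immediately by comparing the local formula \eqref{eqn.divgradH1} with the classification in Corollary \ref{cor.LLclass}. The extra care you take to verify that the first-order remainder is a genuine, coordinate-independent vector field is a sound (if unstated in the paper) refinement of the same argument.
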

In particular, we can consider \eqref{eqn.divgradH1} in the case when $\w$ is a Riemannian volume form. Suppose that $( \cdot, \cdot)$ is some Riemannian metric on $M$ and $g : TM \to T^*M$ is the induced bundle isomorphism. As usual, we write $g_{ij} = \big( \frac{\partial}{\partial x^i}, \frac{\partial}{\partial x^j}\big)$, and further let the raised indices $g^{ij}$ be the entries of the matrix inverse of $(g_{ij})$. The Riemannian volume induced by this metric is the form
locally given by $\w = \sqrt{|g|} \, dx^1 \wedge \cdots \wedge dx^d$, where $|g| = \det(g_{ij})$.
In this setting, we can rewrite \eqref{eqn.divgradH1} as
\begin{equation} \label{eqn.divggradH}
\begin{aligned}
\operatorname{div}^g \operatorname{grad}_{\mathcal{H}} &= \sum_{l,k =1}^d\left\{ \beta^{lk} \,\frac{\partial^2}{\partial x^l \partial x^k} + \frac{\partial \beta^{lk}}{\partial x^l}\, \frac{\partial}{\partial x^k}-\frac{1}{2} \sum_{i,j =1}^d \beta^{lk} g_{ij} \frac{\partial g^{ij}}{\partial x^l}\, \frac{\partial}{\partial x^k}\right\} \\
& =  \sum_{l,k =1}^d\left\{ \beta^{lk} \,\frac{\partial^2}{\partial x^l \partial x^k} + \bigg[  \frac{\partial \beta^{lk}}{\partial x^l} -\frac{1}{2} \sum\limits_{i, j =1}^d \beta^{lk} g_{ij} \frac{\partial g^{ij}}{\partial x^l} \bigg] \frac{\partial}{\partial x^k} \right\}
\end{aligned}
\end{equation}
where we write $\diverg^g$ rather than $\diverg^{\w}$ to emphasize that we are using the Riemannian volume form with respect to the metric defined by $g$.

From here we have a good starting point to approach a reasonable definition of an analogue of the Laplace-Beltrami operator through a ``divergence of the gradient'' type construction; however, this will only be meaningful if there is some volume measure on $M$ to which we want to calculate a divergence with respect to. A priori, there are (at least) a couple intrinsic measures that we can put on these spaces; most commonly considered are the Hausdorff and Popp's measures. For a detailed description of Popp's measure see \cite{MontgomeryBook2002}. In the case that $M$ is a Lie group and there exists a global orthonormal horizontal frame of left invariant vector fields, then the Hausdorff and Popp's measure agree with the left Haar measure up to some scaling constant. We consider the Lie group setting presently.

Here we would like to make a comment about the choices implicitly made when we choose a reference measure. This is specific to the Lie group case, and it is not so easy to see in a general sub-Riemannian setting. While several authors (mentioned elsewhere in the current paper) considered these three measures, namely, the Hausdorff measure, the Haar measure and Popp's measure, they do not always indicate that the choice of left- \emph{or} right- invariant vector fields is significant not only for the Haar measure, but also for the Hausdorff measure and Popp's measure. Indeed, the significance of this choice is apparent when we look at the construction of Popp's measure. As to the Hausdorff measure, being a metric space measure it uses the Carnot-Caratheodory metric defined by \eqref{e.3.1}. It might not be obvious, but this metric is left- or -right invariant depending on our choices at the level of the Lie algebra.

\subsection{When $M$ is a Lie Group}

Again, let $M = G$ be a Lie Group on which we will assume both Properties \ref{property.HLI} and \ref{property.metricLI} hold. Let  $\mathcal{X} = \{X_1, ..., X_m\}$ be a left-invariant orthonormal horizontal frame. Denote by $\mu_L$ and $\mu_R$ the left and right Haar measures, respectively.

If we extend $\mathcal{X}$ to a full frame of $TG$ of left invariant vector fields $\{X_1, ..., X_m,$ $X_{m+1}, ..., X_d\}$ and let $\{\chi^1, ..., \chi^d\}$ be the corresponding dual frame, then the volume form $\chi^1 \wedge \cdots \wedge \chi^d$ is left-invariant and hence induces a left Haar measure. Since left (resp. right) Haar measure is unique up to a scalar multiple, constructing the left Haar measure in this way is independent of the extended frame up to this scalar multiple.  In particular, the divergence against $\chi^1 \wedge \cdots \wedge \chi^d$ is independent of our choice of an extension. From \cite{AgrachevBoscainGauthierRossi2009} (with the sign corrected) we have the following theorem.

\begin{thm} \label{thm.divLgradH}
Suppose that $\{X_1, ..., X_m\}$ is an orthonormal horizontal frame of left invariant vector fields. Let $\Delta^L = \diverg^{\mu_L} \gradH$. Then, using the notation introduced in Theorem \ref{t.5.9},
\begin{equation*}
X_{\Delta^L} = -  \sum\limits_{k=1}^m\operatorname{Tr}( \operatorname{ad} X_k(e))  X_k,
\end{equation*}
where $\operatorname{Tr}( \operatorname{ad} X_k(e))$ is the trace of the linear map defined by $\operatorname{ad}X_k(e)\,(v) = [X_k(e),v]$ for all $v \in \mathfrak{g}$.
This means that
\begin{equation}
\diverg^{\mu_L} \gradH = \sum\limits_{k=1}^m X_k^2 - \sum\limits_{k=1}^m\operatorname{Tr}( \operatorname{ad} X_k(e))  X_k.
\label{eqn.divLgradH}
\end{equation}
Moreover,  $G$ is unimodular if and only if $X_{\Delta^L} \equiv 0$, in which case
\[
\diverg^{\mu_L} \gradH  = \sum\limits_{k=1}^m  X_k^2.
\]
\end{thm}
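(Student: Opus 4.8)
The plan is to reduce the whole statement to a single computation---the divergence of a left-invariant vector field---and then to identify that divergence with $-\operatorname{Tr}(\operatorname{ad}X_k(e))$. First I would establish a Leibniz rule for the divergence: for a smooth function $f$ and a vector field $X$,
\[
\diverg^{\w}(fX) = f\,\diverg^{\w}(X) + X(f),
\]
which follows immediately from the local formula \eqref{eqn.divagainstform} and the ordinary product rule. Applying this to $\gradH f = \sum_{k=1}^m X_k(f)X_k$ gives
\[
\diverg^{\mu_L}\gradH f = \sum_{k=1}^m \Big( X_k\big(X_k(f)\big) + \diverg^{\mu_L}(X_k)\,X_k(f)\Big) = \sum_{k=1}^m X_k^2 f + \Big(\sum_{k=1}^m \diverg^{\mu_L}(X_k)\,X_k\Big)f,
\]
so that, by Theorem \ref{t.5.9}, the first-order part is $X_{\Delta^L} = \sum_{k=1}^m \diverg^{\mu_L}(X_k)X_k$. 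Everything thus rests on computing $\diverg^{\mu_L}(X_k)$.

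For this main computation I would extend $\{X_1,\dots,X_m\}$ to a full left-invariant frame $\{X_1,\dots,X_d\}$ with dual left-invariant coframe $\{\chi^1,\dots,\chi^d\}$, so that $\w := \chi^1\wedge\cdots\wedge\chi^d$ induces $\mu_L$ and $\diverg^{\mu_L}(X_k)\,\w = \mathcal{L}_{X_k}\w$. Writing the structure constants $[X_i,X_j]=\sum_l c_{ij}^l X_l$ and using the Maurer--Cartan equations $d\chi^l = -\tfrac12\sum_{i,j}c_{ij}^l\,\chi^i\wedge\chi^j$, Cartan's formula yields $\mathcal{L}_{X_k}\chi^l = \iota_{X_k}d\chi^l = -\sum_j c_{kj}^l\,\chi^j$, since $d\iota_{X_k}\chi^l = d(\delta_k^l)=0$. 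Hence
\[
\mathcal{L}_{X_k}\w = \sum_{l=1}^d \chi^1\wedge\cdots\wedge\mathcal{L}_{X_k}\chi^l\wedge\cdots\wedge\chi^d = -\Big(\sum_{l=1}^d c_{kl}^l\Big)\w,
\]
so $\diverg^{\mu_L}(X_k) = -\sum_l c_{kl}^l$. Reading off the matrix of $\operatorname{ad}X_k(e)$ in the basis $\{X_j(e)\}$, whose $(l,j)$ entry is $c_{kj}^l$, identifies $\sum_l c_{kl}^l = \operatorname{Tr}(\operatorname{ad}X_k(e))$, which proves \eqref{eqn.divLgradH}.

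For the unimodularity statement, the forward direction is standard: $G$ is unimodular exactly when $\operatorname{Tr}(\operatorname{ad}X)=0$ for every $X\in\mathfrak{g}$ (the modular function $g\mapsto\det\operatorname{Ad}(g)$ has differential $\operatorname{Tr}\circ\operatorname{ad}$ at $e$, and $G$ is connected), and in that case $X_{\Delta^L}\equiv 0$ directly from \eqref{eqn.divLgradH}. The delicate direction is the converse, and I expect this to be the main obstacle. From $X_{\Delta^L}\equiv 0$ together with the linear independence of the $X_k$ we learn only that the linear functional $\tau(X):=\operatorname{Tr}(\operatorname{ad}X)$ vanishes on $\Horz_e$, whereas unimodularity requires $\tau\equiv 0$ on all of $\mathfrak{g}$. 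The two facts that bridge this gap are, first, that $\tau$ vanishes on the derived algebra, because $\tau([X,Y])=\operatorname{Tr}([\operatorname{ad}X,\operatorname{ad}Y])=0$, and second, that H\"ormander's condition (Assumption \ref{HCassump}) forces $\mathfrak{g}=\Horz_e+[\mathfrak{g},\mathfrak{g}]$, since every iterated bracket of horizontal vectors of length at least two lies in $[\mathfrak{g},\mathfrak{g}]$. Combining these, $\tau$ vanishes on a spanning set of $\mathfrak{g}$ and hence identically, so $G$ is unimodular. It is precisely here that the bracket-generating hypothesis is indispensable: without it, $\{X_k\}$ could span a proper subalgebra and the equivalence would fail.
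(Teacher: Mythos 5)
Your proof is correct, and its computational core coincides with the paper's (Appendix~A): both work in a left-invariant coframe $\{\chi^1,\dots,\chi^d\}$ dual to an extension of the horizontal frame and extract $\operatorname{Tr}(\operatorname{ad}X_k(e))$ from the structure constants. The organization is genuinely different, though. The paper computes $d\circ\iota_X(\chi^1\wedge\cdots\wedge\chi^d)$ for a general horizontal field $X=\sum a^iX_i$ all at once, the key step being a hands-on evaluation of $d\big(\chi^1\wedge\cdots\wedge\chi^{i-1}\wedge\chi^{i+1}\wedge\cdots\wedge\chi^d\big)$ on the frame. You instead factor the problem through the Leibniz identity $\diverg^{\w}(fX)=f\,\diverg^{\w}(X)+X(f)$, which reduces the whole theorem to the single scalar $\diverg^{\mu_L}(X_k)$ for a left-invariant field, and you compute that via $\mathscr{L}_{X_k}\w$, Cartan's formula, and the Maurer--Cartan equations. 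This is shorter and more modular: the identity $\mathscr{L}_{X_k}\chi^l=-\sum_j c_{kj}^l\chi^j$ does in one line what the paper's sign-bookkeeping claim does in a page, and the reduction $X_{\Delta^L}=\sum_k\diverg^{\mu_L}(X_k)X_k$ is reusable (it is essentially how one would also organize Theorem~\ref{thm.divRgradH}). The more substantial divergence from the paper is the unimodularity equivalence: the paper cites \cite[Propositions 17, 18]{AgrachevBoscainGauthierRossi2009} and does not prove it, whereas you supply a complete argument, correctly locating where the bracket-generating hypothesis is indispensable in the converse --- $\operatorname{Tr}\circ\operatorname{ad}$ vanishes on $\Horz_e$ by hypothesis and on $[\mathfrak{g},\mathfrak{g}]$ for free, and H\"{o}rmander's condition forces $\mathfrak{g}=\Horz_e+[\mathfrak{g},\mathfrak{g}]$; connectedness of $G$ (built into the paper's definition of a sub-Riemannian manifold) then upgrades $\operatorname{Tr}\circ\operatorname{ad}\equiv 0$ to unimodularity. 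That self-contained converse is a worthwhile addition that the paper outsources.
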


The classification of unimodularity in terms of $X_{\Delta^L}$ can be found in \cite[Propositions 17, 18]{AgrachevBoscainGauthierRossi2009}. The derivation of an expression for $X_{\Delta^L}$ can be found in the same paper; we also provide a derivation below in Section \ref{s.calcs}. The calculation uses the standard fact that the divergence $\diverg^{\mu_L}(X)$ of a vector field $X$ can be found as
\[
\diverg^{\mu_L}(X)\, \chi^1 \wedge \cdots \wedge \chi^d =
\mathscr{L}_X ( \chi^1 \wedge \cdots \wedge \chi^d ) = d \circ \iota_X (\chi^1 \wedge \cdots \wedge \chi^d)
\]
where $\mathscr{L}_X$ is Lie differentiation along $X$, $d$ is exterior differentiation, and $\iota_X$ is interior multiplication with respect to $X$. Upon replacing $X$ with $\gradH f$ for some smooth map $f : M \to \mathbb{R}$, one arrives at
\[
\ d \circ \iota_{\gradH f} (\chi^1 \wedge \cdots \wedge \chi^d) = \Big\{ \sum\limits_{k=1}^m \Big( X_k^2 - \operatorname{Tr}( \operatorname{ad} X_k(e))  X_k \Big) f \Big\}\, \chi^1 \wedge \cdots \wedge \chi^d.
\]

From this, we can derive a similar expression for $\diverg^{\mu_R} \gradH$. We let $\mathfrak{m} : G \to (0,\infty)$ be the modular function and $\mathfrak{m_i} : G \to (0,\infty)$ be defined by $\mathfrak{m_i}(x) = \mathfrak{m}(x^{-1})$. It is well known that  $\mathfrak{m_i}$ is a continuous group homomorphism from $G$ into the multiplicative group $(0,\infty)$ (the same is true for $\mathfrak{m}$) and thus smooth, and moreover $\mu_R(dx) = \mathfrak{m_i}(x)\mu_L(dx)$.  The fact that $\mathfrak{m_i}$ is a homomorphism further implies that $\mathfrak{m_i}(x) \mathfrak{m}(x) = 1$ for every $x \in G$.

\begin{thm}\label{thm.divRgradH}
Suppose that $\{X_1, ..., X_m\}$ is an orthonormal horizontal frame of left invariant vector fields. Let $\Delta^R = \diverg^{\mu_R} \gradH$. Then, using the notation introduced in Theorem \ref{t.5.9},
\[
X_{\Delta^R} = \sum\limits_{k=1}^m \Big[ \mfm X_k(\mfmi) - \operatorname{Tr}( \operatorname{ad} X_k(e))\Big]  X_k.
\]
This means that
\[
\diverg^{\mu_R} \gradH = \sum\limits_{k=1}^m X_k^2 + \sum\limits_{k=1}^m \Big[ \mfm X_k(\mfmi) - \operatorname{Tr}( \operatorname{ad} X_k(e))\Big]  X_k.
\]
\end{thm}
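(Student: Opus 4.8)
The plan is to derive this directly from Theorem~\ref{thm.divLgradH} using the relation $\mu_R(dx) = \mfmi(x)\,\mu_L(dx)$ together with a general rule describing how $\diverg^{\w}$ transforms when the volume form is rescaled by a positive smooth function.

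First I would establish the scaling rule: if $\w$ is a volume form and $h : G \to (0,\infty)$ is smooth, then for any vector field $X$,
\[
\diverg^{h\w}(X) = \diverg^{\w}(X) + X(\log h).
\]
This follows at once from the local expression \eqref{eqn.divagainstform}: writing $\w = \tau\, dx^1\wedge\cdots\wedge dx^d$ so that $h\w$ has density $h\tau$, the only change in the formula is the replacement of $\tau^{-1}\partial\tau/\partial x^i$ by $(h\tau)^{-1}\partial(h\tau)/\partial x^i = \tau^{-1}\partial\tau/\partial x^i + h^{-1}\partial h/\partial x^i$, and summing the extra terms against $X^i$ produces exactly $h^{-1}X(h) = X(\log h)$. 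Equivalently one can argue intrinsically via $\mathscr{L}_X(h\w) = X(h)\,\w + h\,\mathscr{L}_X\w$ and divide by $h\w$.

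Next, taking $h = \mfmi$ so that $h\,\mu_L = \mu_R$, and applying the scaling rule to $X = \gradH f$ for an arbitrary smooth $f$, I obtain
\[
\diverg^{\mu_R}(\gradH f) = \diverg^{\mu_L}(\gradH f) + (\gradH f)(\log \mfmi).
\]
Using $\gradH f = \sum_{j=1}^m X_j(f)\,X_j$ (recorded just after the definition of $\gradH$), the last term becomes $\sum_{j=1}^m X_j(f)\,X_j(\log \mfmi)$, that is, the action on $f$ of the first-order operator $\sum_j X_j(\log\mfmi)\,X_j$. Since $\mfmi\,\mfm \equiv 1$, we have $X_j(\log\mfmi) = X_j(\mfmi)/\mfmi = \mfm\,X_j(\mfmi)$, so this first-order operator is precisely $\sum_j \mfm\,X_j(\mfmi)\,X_j$.

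Finally I would substitute the expression for $\diverg^{\mu_L}\gradH$ from Theorem~\ref{thm.divLgradH}, namely $\sum_k X_k^2 - \sum_k \operatorname{Tr}(\operatorname{ad}X_k(e))\,X_k$, and combine the two first-order pieces to conclude
\[
\diverg^{\mu_R}\gradH = \sum_{k=1}^m X_k^2 + \sum_{k=1}^m\big[\mfm\,X_k(\mfmi) - \operatorname{Tr}(\operatorname{ad}X_k(e))\big]X_k,
\]
whence the stated form of $X_{\Delta^R}$, its uniqueness being already guaranteed by Theorem~\ref{t.5.9}. There is no serious obstacle beyond bookkeeping; the one point requiring care is the logarithmic-derivative manipulation, namely keeping the sign right and invoking $\mfm\mfmi = 1$ so that $X_j(\log\mfmi)$ is correctly rewritten as $\mfm\,X_j(\mfmi)$ (and not its negative), which is exactly what makes the extra first-order term add rather than cancel against the left-invariant one.
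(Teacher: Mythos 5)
Your proposal is correct and is in substance the same argument as the paper's: both reduce to Theorem \ref{thm.divLgradH} and identify the extra first-order term coming from the density $\mfmi$ relating $\mu_R$ to $\mu_L$ (the paper computes $d\mfmi\wedge\iota_{\gradH f}(\chi^1\wedge\cdots\wedge\chi^d)$ explicitly with wedge-product bookkeeping, which is exactly your $(\gradH f)(\log\mfmi)$ correction term). Your packaging of this step as a general rescaling identity $\diverg^{h\w}(X)=\diverg^{\w}(X)+X(\log h)$, verified from \eqref{eqn.divagainstform}, is a clean and valid shortcut, and your use of $\mfm\mfmi=1$ to rewrite $X_j(\log\mfmi)$ as $\mfm\,X_j(\mfmi)$ matches the paper.
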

\begin{proof}
As noted above, $\mfmi\, \chi^1 \wedge \cdots \wedge \chi^d$ induces the right Haar measure $\mu_R$. Therefore, we have
\begin{align*}
&d \circ \iota_X (\mfmi\, \chi^1 \wedge \cdots \wedge \chi^d) = d \big[\mfmi \, \iota_X( \chi^1 \wedge \cdots \wedge \chi^d) \big] \\
&\quad = d\mfmi \wedge \iota_X(\chi^1 \wedge \cdots \wedge \chi^d) + \mfmi \, d\circ \iota_X ( \chi^1 \wedge \cdots \wedge \chi^d).
\end{align*}
The second term in the last equality is readily  understood from the calculations with respect to $\mu_L$. Indeed, replacing $X$ with $\gradH f$ we have
\begin{align*}
 \mfmi \, d\circ \iota_{\gradH f} ( \chi^1 \wedge \cdots \wedge \chi^d) =  \Big\{ \sum\limits_{k=1}^m \Big( X_k^2 - \operatorname{Tr}( \operatorname{ad} X_k(e))  X_k \Big) f \Big\}\,\mfmi \chi^1 \wedge \cdots \wedge \chi^d
\end{align*}

 For the first term we get
\begin{align*}
\sum_{j,k = 1}^d \Big[ (-1)^{j+1} \big(X_k(\mfmi)  \chi^k \big) \wedge \big( \chi^j(X) \chi^1 \wedge \cdots \wedge \chi^{j-1} \wedge \chi^{j+1} \wedge \cdots \wedge \chi^d\big)\Big] \\
= \sum_{j,k = 1}^d \Big[ (-1)^{j+1} X_k(\mfmi)   \chi^j(X) \, \big(  \chi^k  \wedge \chi^1 \wedge \cdots \wedge \chi^{j-1} \wedge \chi^{j+1} \wedge \cdots \wedge \chi^d\big)\Big]\\
=\Big\{ \sum_{j,k = 1}^d  (-1)^{j+1} (-1)^{k+1} \delta_{jk}X_k(\mfmi)   \chi^j(X) \, \Big\}\,  \chi^1 \wedge \cdots \wedge \chi^d\\
= \Big\{ \mfm \sum_{k=1}^d X_k(\mfmi) \chi^k(X) \Big\} \,\mfmi\chi^1 \wedge \cdots \wedge \chi^d,
\end{align*}
where in the last equality we used the fact that the pointwise product $\mfm \mfmi = 1$. Replacing $X$ with $\gradH f = \sum\limits_{j=1}^m X_j(f) X_j$, we find
\[
d\mfmi \wedge \iota_{\gradH f}(\chi^1 \wedge \cdots \wedge \chi^d) = \Big\{ \mfm \sum_{k=1}^m X_k(\mfmi) X_k(f) \Big\} \,\mfmi\chi^1 \wedge \cdots \wedge \chi^d
\]
It is important to note that the last equality has the coefficient summing only through the $m$ horizontal vector fields. Combining these calculations leads to
\begin{align*}
&d \circ \iota_{\gradH f} (\mfmi\, \chi^1 \wedge \cdots \wedge \chi^d) \\
&\quad = \Big\{ \sum\limits_{k=1}^m \Big( X_k^2 + \big[ \mfm X_k(\mfmi) - \operatorname{Tr}( \operatorname{ad} X_k(e))\big]  X_k \Big) f \Big\} \, \mfmi\, \chi^1 \wedge \cdots \wedge \chi^d
\end{align*}
resulting in
$\displaystyle
\diverg^{\mu_R} \gradH =  \sum\limits_{k=1}^m X_k^2 + \sum\limits_{k=1}^m \Big[ \mfm X_k(\mfmi) - \operatorname{Tr}( \operatorname{ad} X_k(e))\Big]  X_k.
$
\end{proof}

\begin{rem}
Unlike $X_{\Delta^L}$ introduced in Theorem \ref{thm.divLgradH}, it can happen that $X_{\Delta^R} = 0$ when $G$ is not unimodular. In Example \ref{example.affine} we see that such is the case for the affine group. This asymmetry  stems from the fact that expressions for $X_{\Delta^L}$ and $X_{\Delta^R}$ are written in terms of left-invariant vector fields. 
\end{rem}

\begin{rem}
As previously mentioned, if we consider a left invariant structure on $G$ it can be natural to endow $G$ with a right Haar measure. In particular, the sum of squares $\sum\limits_{k=1}^m X_k^2$ of a left invariant orthonormal  horizontal frame is essentially self-adjoint with respect to the right Haar measure on $C_c^{\infty}(M)$; see  \cite[p. 950]{DriverGrossSaloff-Coste2009a}.
\end{rem}

\section{The Operator $\LV$}

We assume that the manifold $M$ is complete with respect to the metric $d_{CC}$.

\begin{nota}
We let $\Phi$ be the flow of the Hamilton-Jacobi equations \eqref{eqn.HamiltonsEq}. Indeed, we will consider $\Phi$ as a map
\[
\Phi : [0, \infty) \times T^*M \longrightarrow T^{\ast}M,\]
 such that if $X \in \mathcal{H}_x$ then $t \mapsto \Phi_t(x,p)$ is the curve $(x(t), p(t))$ in $T^{\ast}M$ satisfying Hamilton-Jacobi equations with initial conditions $x(0) = x$ and $p(0) = p$.
\end{nota}

\begin{rem}
The fact that for each choice of initial conditions $(x, p) \in T^*M$, the flow $t \mapsto \Phi_t(x,p)$ is defined for all $t \geqslant 0$ comes along with the assumption that $M$ is complete with respect to $d_{CC}$; see Theorem \ref{thm.HopfRinow}.
\end{rem}

Before defining the operator $\LV$ in Definition \ref{defi.Lg} below, we will use the following proposition, which is proved fiberwise in Proposition \ref{prop.distinguishedVlinear}.
\begin{prop}\label{prop.distinguishedV}
Suppose that $\V$ is a smooth sub-bundle of $TM$ such that $TM = \Horz \oplus \V$. Then there exists a unique symmetric, positive semi-definite linear map $g^{\V} : TM \to T^*M$ such that $\beta \circ g^{\V}(X) = X$ for every horizontal vector $X$, and $g^{\V}(Y) = 0$ for every $Y \in \V$.  If $( \cdot, \cdot )$ is a Riemannian metric extending the sub-Riemannian metric in such a way that $\V = \Horz^{\perp}$, then $g^{\V}(X) = g(X)$ for every horizontal $X$, where $g : TM \to T^*M$ is the bundle isomorphism induced by the Riemannian metric $(\cdot, \cdot)$. Further, $g^{\V} = g \circ \beta \circ g$.
\end{prop}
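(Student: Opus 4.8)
The proposition is pointwise in nature, so the plan is to fix a point $x \in M$, prove the assertion as linear algebra on $T_xM = \Horz_x \oplus \V_x$, and upgrade to a smooth bundle map at the end. I will repeatedly use two facts about $\beta$: taking $X = \beta(q)$ in the defining relation $\langle \beta(p), X\rangle = p(X)$ gives $p(\beta(q)) = \langle \beta(p), \beta(q)\rangle$, so $\beta$ is symmetric; and $p(\beta(p)) = \langle \beta(p), \beta(p)\rangle \geqslant 0$ vanishes exactly when $\beta(p) = 0$. The central step is to show that $\beta$ restricts to an isomorphism from $A := \{p \in T_x^*M : p(\V_x) = 0\}$ onto $\Horz_x$. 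As $\dim A = m = \dim \Horz_x$ and $\beta(T_x^*M) = \Horz_x$, it is enough to show $A \cap \ker\beta = \{0\}$, and for this I would first check that $\ker\beta$ is the annihilator of $\Horz_x$: if $\beta(p) = 0$ then $p(\beta(q)) = q(\beta(p)) = 0$ for all $q$, so $p$ kills $\operatorname{im}\beta = \Horz_x$; conversely if $p$ kills $\Horz_x$ then $p(\beta(p)) = \langle \beta(p), \beta(p)\rangle = 0$ forces $\beta(p) = 0$. Thus a $p \in A \cap \ker\beta$ annihilates both $\V_x$ and $\Horz_x$, hence vanishes. Writing $\gamma : \Horz_x \to A$ for the inverse of $\beta|_A$, I define $g^{\V}(u) := \gamma(u_\Horz)$ for $u = u_\Horz + u_\V$; the properties $g^{\V}(\V_x) = 0$ and $\beta \circ g^{\V}(X) = \beta(\gamma(X)) = X$ are immediate.

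Symmetry and positive semi-definiteness then follow by manipulation with $\gamma$: since $\gamma(u_\Horz) \in A$ kills $\V_x$ and $v_\Horz = \beta(\gamma(v_\Horz))$, symmetry of $\beta$ gives $g^{\V}(u)(v) = \gamma(u_\Horz)(\beta\gamma(v_\Horz)) = \gamma(v_\Horz)(\beta\gamma(u_\Horz)) = g^{\V}(v)(u)$, while $g^{\V}(u)(u) = \langle \beta\gamma(u_\Horz), \beta\gamma(u_\Horz)\rangle = \langle u_\Horz, u_\Horz\rangle \geqslant 0$. For uniqueness, let $\tilde g$ be any symmetric map with the two defining properties. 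Testing symmetry against $Y \in \V_x$ gives $\tilde g(X)(Y) = \tilde g(Y)(X) = 0$, so $\tilde g(X) \in A$ for horizontal $X$; then $\beta(\tilde g(X)) = X$ and injectivity of $\beta|_A$ force $\tilde g(X) = \gamma(X)$, which with $\tilde g(\V_x) = 0$ yields $\tilde g = g^{\V}$. (Positive semi-definiteness is not needed here.)

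For the Riemannian statements, with $\V = \Horz^\perp$ and induced isomorphism $g$, I would check that $g(X)$ has the two properties characterizing $\gamma(X)$ for horizontal $X$: first $g(X) \in A$, since $g(X)(Y) = (X, Y) = 0$ for $Y \in \Horz^\perp$; second $\beta(g(X)) = X$, since $\langle \beta g(X), Z\rangle = g(X)(Z) = (X,Z) = \langle X, Z\rangle$ for all $Z \in \Horz_x$ (the metric extends the sub-Riemannian one) and the inner product is nondegenerate on $\Horz_x$. By uniqueness $g(X) = g^{\V}(X)$. Finally $g^{\V} = g\circ\beta\circ g$ follows by noting $\beta(g(u)) = u_\Horz$: indeed $\beta(g(u_\Horz)) = u_\Horz$, while $g(u_\V)$ annihilates $\Horz_x$ and so lies in $\ker\beta$, giving $\beta(g(u_\V)) = 0$; hence $g(\beta(g(u))) = g(u_\Horz) = g^{\V}(u)$.

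The main obstacle is the central step of the first paragraph, namely recognizing that $\ker\beta$ is exactly the annihilator of $\Horz_x$ so that $\beta|_A$ is an isomorphism; this is the only place where positive-definiteness of the sub-Riemannian metric enters essentially, and once $\gamma$ is in hand everything reduces to formal use of the two identities above. Smoothness of $g^{\V}$ is not a separate difficulty: all the data $\beta$, $\langle \cdot, \cdot\rangle$, and the splitting $\Horz \oplus \V$ vary smoothly, and $\gamma = (\beta|_A)^{-1}$ inverts a smoothly varying isomorphism, so $g^{\V}$ is a smooth bundle homomorphism.
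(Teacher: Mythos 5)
Your argument is correct and follows essentially the same route as the paper's fiberwise proof (Proposition \ref{prop.distinguishedVlinear}): your $\gamma=(\beta|_A)^{-1}$ with $A$ the annihilator of $\V_x$ is exactly the paper's $(\beta_{\V})^{-1}\oplus{\bf 0}$ from its first construction, and your treatment of the extended metric matches its second construction and the identity $g^{\V}=g\circ\beta\circ g$. If anything, your explicit identification of $\Null(\beta)$ as the annihilator of $\Horz_x$ and your uniqueness argument via symmetry are slightly more detailed than the paper's.
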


\begin{df}
We will call such a bundle $\V$ a (choice of) \emph{vertical distribution}. 
\end{df}

\begin{rem}
For a smooth manifold $M$ of dimension $2n +1$, a \emph{contact form} $\w$ is a one form on $M$ such that $\w \wedge (d \w)^n \neq 0$ where $(d \w)^n = d\w \wedge \cdots \wedge d\w$. If a contact form exists on $M$, then $M$ is necessarily orientable since $\w \wedge (d\w)^n$ is a nowhere vanishing $2n+1$ form. When $M$ is endowed with a contact form $\w$, then $(M, \w)$ is called a \emph{contact manifold}. There is a canonical horizontal distribution $\Horz$ of dimension $2n$ on a contact manifold $(M, \w)$ given by $\Horz = \ker(\w)$. Moreover, there is a canonical vertical vector field $T$, called the \emph{Reeb vector field}, defined by $\w(T) = 1$ and $\mathscr{L}_T \w = 0$, where $\mathscr{L}_T$ is the Lie derivative with respect to $T$. In particular, on such manifolds there is a meaningful and natural choice of vertical bundle $\V = \operatorname{span}(T)$.
\end{rem}

\begin{nota}
We denote the unit sphere in $\mathcal{H}_x$ by $\mathcal{S}^{\mathcal{H}}_x := \{ X \in \mathcal{H}_x : \langle X, X \rangle_x = 1\}$. The (unique) rotationally invariant measure on $\mathcal{S}_x$ will be denoted $\mathbb{U}_x$.
\end{nota}

\begin{df} \label{defi.Lg}
Define $\mathcal{L}^{\mathcal{V}} : C^{\infty}_c(M) \to \mathbb{R}$ as the second order operator defined by
\begin{equation} \label{eqn.LV}
 \LV f (x) := \int_{\mathcal{S}^{\mathcal{H}}_x} \left\{ \frac{d^2}{dt^2} \Big|_0 f \left(\Phi_{t}(x,g^{\V}(X))\right) \right\} \mathbb{U}_x(dX).
\end{equation}
\end{df}
The operator $\LV$ has been introduced in \cite{GordinaLaetsch2014a}, where it is shown that $\LV$ is the generator of a process which is the limit of a naturally constructed horizontal random walk. The operator $\LV$ can be viewed as the generator of a horizontal Brownian motion on $M$, the role played by the Laplace-Beltrami operator on Riemannian manifolds. The compelling notion here is that $\mathcal{L}^{\V}$ is introduced to be canonical with respect to a sub-Riemannian Brownian motion, whose construction depends only on a choice of vertical bundle $\V$, rather than on a choice of measure.

We give here a version of \cite[Theorem 3.5]{GordinaLaetsch2014a}, expressing $\LV$ in local coordinates. In comparison with \eqref{eqn.LB}, it becomes immediately clear that $\LV$ is the ($1/m$ scaled) Laplace-Beltrami operator in the Riemannian case $\mathcal{H} = TM$.
\begin{thm} \label{thm.LVlocal}
In local coordinates, $\LV$ can be written as
\begin{equation}
\label{eqn.LVlocal}
\LV = \frac{1}{m}  \sum\limits_{i,j=1}^d\left[ \beta^{ij} \,\frac{\partial^2}{\partial x^i \partial x^j} - \sum\limits_{k=1}^d  \Gamma^{ijk} [g^{\V}]_{ij}\, \frac{\partial}{\partial x^k} \right]
\end{equation}
where $g^{\V}$ was defined in Proposition \ref{prop.distinguishedV} and
\begin{equation}\label{eqn.raisedChristSR}
\Gamma^{ijk} = -\frac{1}{2} \sum\limits_{l=1}^d\left[ \beta^{il} \frac{\partial \beta^{jk}}{\partial x^l} + \beta^{jl} \frac{\partial \beta^{ik}}{\partial x^l} - \beta^{kl}\frac{\partial \beta^{ij}}{\partial x^l}\right]
\end{equation}
is the sub-Riemannian analogue of \eqref{eqn.raisedChrisR}.
\end{thm}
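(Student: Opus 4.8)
The plan is to expand the integrand of \eqref{eqn.LV} as a second-order Taylor polynomial in $t$ and then carry out the fiberwise average against $\mathbb{U}_x$. Write $(x(t),p(t)) = \Phi_t(x, g^{\V}(X))$ for the Hamiltonian flow with initial momentum $p(0) = g^{\V}(X)$. Since $X$ is horizontal, Proposition \ref{prop.distinguishedV} gives $\dot x(0) = \partial H/\partial p = \beta(g^{\V}(X)) = X$, so the curve leaves $x$ with horizontal velocity $X$. The chain rule then yields
\[
\frac{d^2}{dt^2}\Big|_0 f(x(t)) = \sum_{i,j=1}^d \frac{\partial^2 f}{\partial x^i\partial x^j}\, X^i X^j + \sum_{k=1}^d \frac{\partial f}{\partial x^k}\, \ddot x^k(0),
\]
so that $\LV f(x)$ splits into a second-order piece coming from the first sum and a first-order piece coming from $\ddot x(0)$. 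Everything then reduces to computing the two sphere averages $\int_{\SH_x} X^iX^j\,\mathbb{U}_x(dX)$ and $\int_{\SH_x}\ddot x^k(0)\,\mathbb{U}_x(dX)$.

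For the principal part, I would use the normalization $\int_{\SH_x} u^a u^b\,\mathbb{U}_x(dX) = \frac1m\delta^{ab}$ for coordinates $u^a$ of $X$ in an orthonormal basis $\{e_a\}$ of $\mathcal{H}_x$, together with $X^i = \sum_a u^a\, dx^i(e_a)$, to get $\int_{\SH_x} X^iX^j\,\mathbb{U}_x(dX) = \frac1m\sum_a dx^i(e_a)\,dx^j(e_a)$. Lemma \ref{l.5.2} identifies the right-hand sum with $\langle \beta(dx^i),\beta(dx^j)\rangle = \beta^{ij}$, giving $\int_{\SH_x}X^iX^j\,\mathbb{U}_x(dX) = \frac1m\beta^{ij}$ and hence the desired leading term $\frac1m\beta^{ij}\,\partial^2/\partial x^i\partial x^j$.

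The first-order part is the main work. Differentiating Hamilton's equations \eqref{eqn.HamiltonsEq} for $H = \frac12\beta^{ij}p_ip_j$ gives $\dot x^i = \beta^{ij}p_j$ and $\dot p_j = -\frac12(\partial_j\beta^{rs})p_rp_s$, hence
\[
\ddot x^k(0) = (\partial_l\beta^{kj})\beta^{lm}p_mp_j - \tfrac12\beta^{kj}(\partial_j\beta^{rs})p_rp_s,
\]
a quadratic form in $p = g^{\V}(X)$. Writing $p_r = [g^{\V}]_{ra}X^a$ and averaging with the principal identity above, I obtain $\int_{\SH_x} p_rp_s\,\mathbb{U}_x(dX) = \frac1m[g^{\V}]_{ra}\beta^{ab}[g^{\V}]_{sb}$. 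The crucial simplification is the algebraic identity $g^{\V}\circ\beta\circ g^{\V} = g^{\V}$, which follows from Proposition \ref{prop.distinguishedV} (since $\beta\circ g^{\V}$ is the projection onto $\Horz$ along $\V$ and $g^{\V}$ annihilates $\V$); in coordinates it reads $[g^{\V}]_{ra}\beta^{ab}[g^{\V}]_{bs} = [g^{\V}]_{rs}$, so that $\int_{\SH_x}p_rp_s\,\mathbb{U}_x(dX) = \frac1m[g^{\V}]_{rs}$. Substituting back and using the symmetry of $[g^{\V}]_{rs}$ and of $\beta^{ij}$ to symmetrize the two surviving terms, the bracketed expression collapses exactly into $-\Gamma^{rsk}[g^{\V}]_{rs}$ with $\Gamma^{rsk}$ as in \eqref{eqn.raisedChristSR}, yielding $\int_{\SH_x}\ddot x^k(0)\,\mathbb{U}_x(dX) = -\frac1m\Gamma^{ijk}[g^{\V}]_{ij}$. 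Combining the two pieces gives \eqref{eqn.LVlocal}.

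The step I expect to be most delicate is this last index manipulation: one must produce $\ddot x(0)$ correctly from the Hamilton-Jacobi system, apply the idempotency identity $g^{\V}\beta g^{\V} = g^{\V}$ at the right moment to reduce $[g^{\V}]\beta[g^{\V}]$ to $[g^{\V}]$, and then recognize that the combination $(\partial_l\beta^{kj})\beta^{lm}[g^{\V}]_{mj} - \frac12\beta^{kj}(\partial_j\beta^{rs})[g^{\V}]_{rs}$ is precisely the contraction of the raised sub-Riemannian Christoffel symbols \eqref{eqn.raisedChristSR} against $[g^{\V}]$. The bookkeeping of the symmetrization (matching the first two terms of $\Gamma^{rsk}$ via the symmetry of $[g^{\V}]_{rs}$) is where sign and index errors are most likely.
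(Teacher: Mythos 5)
Your argument is correct, and it checks out line by line: the Taylor expansion of $t\mapsto f(x(t))$, the sphere average $\int_{\SH_x}X^iX^j\,\unif_x(dX)=\tfrac1m\beta^{ij}$ via Lemma \ref{l.5.2}, the second derivative $\ddot x^k(0)=(\partial_l\beta^{kj})\beta^{lm}p_mp_j-\tfrac12\beta^{kj}(\partial_j\beta^{rs})p_rp_s$ from \eqref{eqn.HamiltonsEq}, the reduction $[g^{\V}]_{ra}\beta^{ab}[g^{\V}]_{bs}=[g^{\V}]_{rs}$, and the final matching with $-\Gamma^{ijk}[g^{\V}]_{ij}$ after symmetrizing the first two terms of \eqref{eqn.raisedChristSR} against the symmetric tensor $[g^{\V}]_{ij}$ all come out with the right signs and the right overall factor of $\tfrac1m$. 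However, your route is genuinely different from the paper's. The paper does not recompute anything from Definition \ref{defi.Lg}: it simply quotes \cite[Theorem 3.5]{GordinaLaetsch2014a}, which expresses $\LV$ as in \eqref{eqn.LVlocal2} with the first-order coefficient written through a compatible Riemannian extension $g$ as $\sum_{a,b}g_{ia}\beta^{ab}g_{bj}$, and then invokes Proposition \ref{prop.distinguishedV} ($g^{\V}=g\circ\beta\circ g$) to rewrite that combination as $[g^{\V}]_{ij}$. What your derivation buys is self-containedness and intrinsicness: you never introduce the auxiliary extension $g$ at all, using only $\beta$, $g^{\V}$, and the idempotency $g^{\V}\circ\beta\circ g^{\V}=g^{\V}$, so the independence of $\LV$ from the choice of compatible metric (the content of Corollary \ref{cor.calcLV}) is manifest rather than deduced afterwards; the cost is that you are in effect reproving the external Theorem 3.5 rather than citing it. Two small points worth making explicit if you write this up: state the normalization of $\unif_x$ as a probability measure (the paper's ``unique rotationally invariant measure'' is only unique up to scale, and your $\tfrac1m\delta^{ab}$ identity requires total mass one), and note that Lemma \ref{l.5.2} is applied fiberwise at $x$ with an orthonormal basis of $\Horz_x$ rather than with a local frame.
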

\begin{proof}
Let $(\cdot, \cdot)$ be any Riemannian metric on $M$ extending the sub-Riemannian metric in such a way that $\mathcal{V}$ is the orthogonal compliment of $\mathcal{H}$ with respect to this metric. Denote by $g : TM \to T^*M$ the bundle isomorphism induced by this extended metric, locally realized as a matrix with components $g_{ij} = \big( \frac{\partial}{\partial x^i}, \frac{\partial}{\partial x^j}\big)$. Theorem \cite[Theorem 3.5]{GordinaLaetsch2014a} gives the local formula for $\LV$
\begin{equation} \label{eqn.LVlocal2}
\LV = \frac{1}{m}  \sum\limits_{i,j=1}^d\left[ \beta^{ij} \,\frac{\partial^2}{\partial x^i \partial x^j} - \sum\limits_{a,b,k=1}^d  \Gamma^{ijk} g_{ia}\beta^{ab} g_{bj}\, \frac{\partial}{\partial x^k} \right].
\end{equation}
From Proposition \ref{prop.distinguishedV}, $g^{\V} = g \circ \beta \circ g$, from which we see $\sum\limits_{a, b=1}^d g_{ia}\beta^{ab} g_{bj} = [g^{\V}]_{ij}$, whence we conclude the result.
\end{proof}

\begin{rem}\label{r.6.9} The Riemannian metric $g$ extending the sub-Riemannian metric is sometimes called compatible (with the sub-Riemannian structure). This is the term we used in \cite{GordinaLaetsch2014a}.
\end{rem}

From Theorem \ref{thm.LVlocal} and its proof, we arrive at two corollaries. The first emphasizes how the selection of a compatible metric in \cite{GordinaLaetsch2014a} changes the first order term of $\LV$ and, moreover, how this compatible metric can be used as a tool in making calculations of $\LV$ tractable.

\begin{cor}\label{cor.calcLV}
Let $(\cdot, \cdot)$ be any Riemannian metric on $M$ extending the sub--Riemannian metric, and suppose that $\mathcal{V}$ is the orthogonal compliment of $\mathcal{H}$ with respect to this metric. In local coordinates, let $G$ be the matrix $G_{ij} = \big(\frac{\partial}{\partial x^i}, \frac{\partial}{\partial x^j} \big)$, and $B$ be the matrix with entries $\beta^{ij}$. Then $[g^{\V}]_{ij} = [ GBG]_{ij}$. In particular, according to \eqref{eqn.LVlocal},
\[
\LV = \frac{1}{m}  \sum\limits_{i,j=1}^d\left[ \beta^{ij} \,\frac{\partial^2}{\partial x^i \partial x^j} - \sum\limits_{k=1}^d  \Gamma^{ijk} [GBG]_{ij}\, \frac{\partial}{\partial x^k} \right]
\]
can be found in terms of the matrix $B$, its derivatives, and $G$. Moreover, only the first order term of $\LV$ depends on the extended metric, and any other extended metric such that $\V$ stays the orthogonal compliment of $\mathcal{H}$ gives rise to the same sub-Laplacian $\LV$.
\end{cor}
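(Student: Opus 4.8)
The plan is to deduce everything from Proposition~\ref{prop.distinguishedV} together with the local formula of Theorem~\ref{thm.LVlocal}; no new analytic input is needed, and the corollary is essentially a repackaging of the identity already used in the proof of Theorem~\ref{thm.LVlocal}.

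First I would establish the matrix identity $[g^{\V}]_{ij} = [GBG]_{ij}$. Proposition~\ref{prop.distinguishedV} gives $g^{\V} = g \circ \beta \circ g$, where $g : TM \to T^*M$ is the bundle isomorphism induced by the compatible metric $(\cdot,\cdot)$. In a chart, $g$ is represented by $G = (g_{ij})$ (it lowers indices), $\beta$ by $B = (\beta^{ij})$ (it raises indices), so the composition $g \circ \beta \circ g$ is the matrix product $GBG$; componentwise this reads $\sum_{a,b=1}^d g_{ia}\beta^{ab}g_{bj} = [g^{\V}]_{ij}$, exactly the relation extracted in the proof of Theorem~\ref{thm.LVlocal}. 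The only point needing care is the index bookkeeping — verifying that the order of composition matches the order of multiplication and that $G$ itself, not its inverse, is the matrix of $g$ — but this is routine.

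Substituting $[g^{\V}]_{ij} = [GBG]_{ij}$ into \eqref{eqn.LVlocal} then yields the displayed expression for $\LV$. Because the second-order coefficients $\beta^{ij}$ and the raised Christoffel symbols $\Gamma^{ijk}$ of \eqref{eqn.raisedChristSR} are built solely from $B$ and its first derivatives, while $G$ enters only through the factor $[GBG]_{ij}$ multiplying the first-order part, the operator is expressed in terms of $B$, its derivatives, and $G$, with $G$ confined to the first-order term. This gives at once the assertion that only the first-order term of $\LV$ can depend on the extended metric.

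For the final independence statement I would invoke the uniqueness clause of Proposition~\ref{prop.distinguishedV}: the map $g^{\V}$ is characterized intrinsically by $\beta \circ g^{\V}(X) = X$ for horizontal $X$ and $g^{\V}|_{\V} = 0$, so it is determined by the pair $(\beta, \V)$ alone and is independent of which compatible metric realizes it as $g \circ \beta \circ g$. Hence any two Riemannian extensions sharing the same orthogonal complement $\V$ produce the same $g^{\V}$, the same first-order term in \eqref{eqn.LVlocal}, and therefore the same $\LV$. I expect this last step — recognizing that all metric dependence is funneled through the metric-independent object $g^{\V}$ — to be the conceptual crux, though since the needed uniqueness is already established it poses no genuine obstacle.
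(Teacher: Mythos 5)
Your argument is correct and follows the paper's own route: the identity $[g^{\V}]_{ij}=\sum_{a,b}g_{ia}\beta^{ab}g_{bj}=[GBG]_{ij}$ is exactly the relation $g^{\V}=g\circ\beta\circ g$ from Proposition \ref{prop.distinguishedV} as used in the proof of Theorem \ref{thm.LVlocal}, and the metric-independence is, as you say, the uniqueness clause of that proposition showing $g^{\V}$ depends only on the pair $(\beta,\V)$. Nothing is missing.
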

The second corollary of Theorem \ref{thm.LVlocal} follows immediately from Corollary \ref{cor.LLclass}.

\begin{cor}
Let $\Delta =  m \, \LV$. Then $\Delta$ is a sub-Laplacian.
\end{cor}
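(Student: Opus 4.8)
The plan is to read off the local form of $\Delta = m\,\LV$ from Theorem \ref{thm.LVlocal} and match it against the characterization of sub-Laplacians in Corollary \ref{cor.LLclass}. Multiplying \eqref{eqn.LVlocal} by $m$ clears the factor $1/m$ and yields, in any local chart,
\[
\Delta = m\,\LV = \sum_{i,j=1}^d \beta^{ij}\,\frac{\partial^2}{\partial x^i \partial x^j} - \sum_{i,j,k=1}^d \Gamma^{ijk}[g^{\V}]_{ij}\,\frac{\partial}{\partial x^k}.
\]
The second-order part is exactly $\sum_{i,j=1}^d \beta^{ij}\,\partial^2/\partial x^i \partial x^j$, which is precisely the principal part appearing in Corollary \ref{cor.LLclass}. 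So the only thing left to arrange is that the remaining first-order part be a smooth (local) vector field.

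Accordingly, I would set $X_0 := -\sum_{i,j,k}\Gamma^{ijk}[g^{\V}]_{ij}\,\partial/\partial x^k$ and verify its coefficients are smooth. First I would note that the $\beta^{ij}$ are smooth by Notation \ref{nota.subR1}, whence the raised Christoffel symbols $\Gamma^{ijk}$ of \eqref{eqn.raisedChristSR}, being built from the $\beta^{ij}$ and their first partial derivatives, are smooth as well. Next, by Proposition \ref{prop.distinguishedV} the bundle map $g^{\V}$ is smooth, so its local matrix entries $[g^{\V}]_{ij}$ are smooth. Hence each coefficient $-\sum_{i,j}\Gamma^{ijk}[g^{\V}]_{ij}$ of $X_0$ is smooth, and $X_0$ is a smooth vector field on the chart.

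With these two observations in hand, $\Delta$ has the local form $\sum_{i,j}\beta^{ij}\,\partial^2/\partial x^i \partial x^j + X_0$ with $X_0$ a smooth vector field, which is exactly the condition supplied by Corollary \ref{cor.LLclass}. Applying that corollary in the direction asserting that this local form implies the sum-of-squares decomposition of Definition \ref{defi.subLaplace} concludes that $\Delta = m\,\LV$ is a sub-Laplacian. I do not expect a genuine obstacle here: the statement is a direct corollary, and the only point requiring any care is confirming the smoothness of the first-order coefficients, which reduces to the smoothness of $\beta$ and of $g^{\V}$ already established earlier.
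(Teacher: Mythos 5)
Your proposal is correct and follows exactly the paper's route: the paper derives this corollary immediately from the local expression \eqref{eqn.LVlocal} of Theorem \ref{thm.LVlocal} together with the characterization of sub-Laplacians in Corollary \ref{cor.LLclass}. You have merely spelled out the smoothness of the first-order coefficients, which the paper leaves implicit.
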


\subsection{Orthogonal Projection Along $\V$ and Comparison of $\LV$ with $\diverg^{\w} \gradH$}

Let $\mathcal{V}$ a choice of vertical bundle and $(\cdot,\cdot)$ be any Riemannian metric extending the sub-Riemannian metric which admits $\mathcal{V}$ as the orthogonal compliment of $\mathcal{H}$. As usual, denote by $g : TM \to T^*M$ the bundle isomorphism induced by the extended metric.

\begin{prop} \label{prop.gbetaorthproj}
The operator $\mathscr{P} := \beta \circ g : TM \to TM$ is orthogonal projection onto $\mathcal{H}$ along $\V$. Symmetrically, the operator $\mathscr{Q} := g \circ \beta : T^*M \to T^*M$ is orthogonal projection onto $g(\mathcal{H})$ along $\Null(\beta)$. Moreover, $\mathscr{P} \circ \beta =\beta \circ \mathscr{Q} =  \beta$ and $\mathscr{Q} \circ g = g \circ \mathscr{P} = g^{\V}$.
\end{prop}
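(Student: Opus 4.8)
The plan is to pin down $\mathscr{P} = \beta\circ g$ by its action on the two summands of the splitting $TM = \Horz \oplus \V$, and then to transport everything to the cotangent side through the isomorphism $g$. Recall from Proposition \ref{prop.distinguishedV} that, because $\V = \Horz^{\perp}$ for the extended metric, we have $g^{\V}(X) = g(X)$ for every horizontal $X$ and $g^{\V} = g\circ\beta\circ g$; combining these with the identity $\beta\circ g^{\V}(X) = X$ gives directly $\mathscr{P}(X) = \beta(g(X)) = \beta(g^{\V}(X)) = X$ for every horizontal $X$. Thus $\mathscr{P}$ restricts to the identity on $\Horz$.

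Next I would show $\mathscr{P}$ annihilates $\V$. For $Y \in \V$ and any horizontal $X$, the defining property $\langle \beta(p), X\rangle = p(X)$ of $\beta$ gives $\langle \beta(g(Y)), X\rangle = g(Y)(X) = (Y,X) = 0$, the last equality because $Y \perp X$ in the extended metric. Since $\beta(g(Y))$ is itself horizontal and is orthogonal to every horizontal vector, non-degeneracy of the sub-Riemannian metric on $\Horz$ forces $\mathscr{P}(Y) = \beta(g(Y)) = 0$. Together with the previous step and the decomposition $TM = \Horz\oplus\V$, this shows $\mathscr{P}$ is the projection with range $\Horz$ and kernel $\V$; since $\V = \Horz^{\perp}$, it is the orthogonal projection, and idempotency is automatic.

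For $\mathscr{Q} = g\circ\beta$ I would exploit that $g$ is an isometry from $(TM, (\cdot,\cdot))$ onto $T^*M$ equipped with the dual metric. The kernel of $\mathscr{Q}$ is $\Null(\beta)$ since $g$ is injective, and $\mathscr{Q}$ fixes $g(\Horz)$ because $\mathscr{Q}(g(X)) = g(\beta(g(X))) = g(\mathscr{P}(X)) = g(X)$ for horizontal $X$. The one point needing care is identifying the kernel: from the previous paragraph $g(\V)\subseteq \Null(\beta)$, and since $\beta$ maps $T^*M$ onto the $m$-dimensional $\Horz$ we have $\dim\Null(\beta) = d - m = \dim\V$, so $g(\V) = \Null(\beta)$. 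As $g$ is an isometry and $\V = \Horz^{\perp}$, we get $\Null(\beta) = g(\V) = g(\Horz)^{\perp}$, whence $\mathscr{Q}$ is the orthogonal projection onto $g(\Horz)$ along $\Null(\beta)$.

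Finally the four identities drop out: $\mathscr{P}\circ\beta = \beta$ because $\beta$ has image in $\Horz$, on which $\mathscr{P}$ is the identity; $\beta\circ\mathscr{Q} = \beta\circ g\circ\beta = \mathscr{P}\circ\beta = \beta$; and $\mathscr{Q}\circ g = g\circ\beta\circ g = g^{\V}$ as well as $g\circ\mathscr{P} = g\circ\beta\circ g = g^{\V}$, both by Proposition \ref{prop.distinguishedV}. I expect the only genuine obstacle to be the kernel identification for $\mathscr{Q}$ and, on the tangent side, the non-degeneracy argument that $\mathscr{P}$ kills $\V$; everything else is formal manipulation of $\beta$, $g$, and $g^{\V}$.
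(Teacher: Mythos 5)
Your proof is correct and follows essentially the same route as the paper: the paper simply cites the block decompositions $g = \beta_{\V}^{-1}\oplus A$ and $\beta = \beta_{\V}\oplus {\bf 0}$ relative to $\Horz\oplus\V$ and $g(\Horz)\oplus\Null(\beta)$ established in the proof of Proposition \ref{prop.distinguishedVlinear}, whereas you re-derive the same facts ($\beta\circ g = \operatorname{Id}$ on $\Horz$, $g(\V)=\Null(\beta)$, and the resulting identities) by direct computation. The extra details you supply --- the non-degeneracy argument killing $\V$ and the dimension count identifying $\Null(\beta)$ --- are exactly what the paper delegates to that earlier proposition, so there is no substantive difference.
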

\begin{proof}
Using the notation analogous to that introduced in the proof of Proposition \eqref{prop.distinguishedVlinear}, $g = \beta_{\mathcal{V}}^{-1} \oplus A : \mathcal{H} \oplus \V \to g(\mathcal{H}) \oplus \Null(\beta)$ and $\beta = \beta_{\mathcal{V}} \oplus {\bf 0} :  g(\mathcal{H}) \oplus \Null(\beta) \to \mathcal{H} \oplus \V$. Therefore, $\mathscr{P} = \operatorname{Id}_{\mathcal{H}} \oplus {\bf 0} : \mathcal{H} \oplus \V \to \mathcal{H} \oplus \V$ and $\mathscr{Q} = \operatorname{Id}_{g(\mathcal{H})} \oplus {\bf 0} : g(\mathcal{H}) \oplus \Null(\beta) \to g(\mathcal{H}) \oplus \Null(\beta)$.
\end{proof}

Continuing with the notation of Proposition \ref{prop.gbetaorthproj}, we express the first order term of $\LV$ in terms of $\mathscr{P}$. From \eqref{eqn.LVlocal2} and \eqref{eqn.raisedChristSR} and the symmetry of $\beta$ and $g$, the coefficient of $\partial/ \partial x^k$ in $\LV$ is
\begin{align*}
-\sum_{i,j=1}^d \Gamma^{ijk} [g^{\V}]_{ij} = \sum\limits_{i,j,l = 1}^d \Big( \sum\limits_{a,b =1}^d \beta^{i l } g_{i a} \beta^{a b} g_{b j} \frac{\partial \beta^{j k}}{\partial x^l} - \frac{1}{2} \beta^{l k}  [g^{\V}]_{ij} \frac{\partial \beta^{ij}}{\partial x^l}\Big) \\
= \sum\limits_{j,l,a=1}^d {\mathscr{P}^l}_a\,   {\mathscr{P}^a}_j  \frac{\partial \beta^{j k}}{\partial x^l} - \frac{1}{2} \sum\limits_{i,j,l=1}^d \beta^{l k}  [g^{\V}]_{ij}  \frac{\partial \beta^{ij}}{\partial x^l} \\
 = \sum\limits_{j,l=1}^d {\mathscr{P}^l}_j  \frac{\partial \beta^{j k}}{\partial x^l} - \frac{1}{2} \sum\limits_{i,j,l=1}^d \beta^{l k} \,  [g^{\V}]_{ij}  \frac{\partial \beta^{ij}}{\partial x^l}
\end{align*}
Here the final equality comes from the fact $\mathscr{P}^2 = \mathscr{P}$. Rearranging these terms and considering \eqref{eqn.divgradH1}, we get the following result.

\begin{thm} \label{thm.LV=divgradH}
Suppose that $M$ is oriented. There exists a volume form $\w = \tau\, dx^1 \wedge \cdots \wedge dx^d$ on $M$ such that $\LV = \frac{1}{m} \diverg^{\w} \gradH$ if and only if
\begin{equation}\label{eqn.LV=divgradH}
 \sum_{l=1}^d \left[-\frac{1}{2} \beta^{lk}  \sum\limits_{i,j=1}^d [g^{\V}]_{ij} \frac{\partial \beta^{ij}}{\partial x^l}  + \sum\limits_{j=1}^d {\mathscr{P}^l}_j \frac{\partial \beta^{jk}}{\partial x^l} \right] = \sum_{l=1}^d\left[ \beta^{lk}\Big(\frac{1}{\tau}\, \frac{\partial \tau}{\partial x^l}\Big) + \frac{\partial \beta^{lk}}{\partial x^l} \right].
\end{equation}
In particular, for the equality  $\LV = \frac{1}{m} \diverg^{\w} \gradH$, it is sufficient that both
\begin{equation} \label{eqn.LV=divgradH2}
-\frac{1}{2}\sum\limits_{i,j,l=1}^d [g^{\V}]_{ij} \frac{\partial \beta^{ij}}{\partial x^l}  = \frac{1}{\tau} \sum\limits_{l=1}^d\frac{\partial \tau}{\partial x^l} ~~\text{ and }~~ \sum\limits_{j,l=1}^d{\mathscr{P}^l}_j \frac{\partial \beta^{jk}}{\partial x^l} = \sum\limits_{l=1}^d \frac{\partial \beta^{lk}}{\partial x^l}.
\end{equation}
\end{thm}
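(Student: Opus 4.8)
The plan is to compare the two operators in local coordinates and reduce the operator identity to an identity of first-order coefficients. Both $m\LV$ and $\diverg^{\w}\gradH$ are sub-Laplacians (shown in the two corollaries above), so by Corollary \ref{cor.LLclass} each has the local form $\sum_{i,j}\beta^{ij}\partial^2/\partial x^i\partial x^j + X_0$ for some smooth vector field $X_0$; in particular their second-order parts are forced to coincide, and each has vanishing zeroth-order term. Consequently $m\LV = \diverg^{\w}\gradH$ --- equivalently $\LV = \frac1m\diverg^{\w}\gradH$ --- holds on $C^\infty(M)$ if and only if the two first-order vector fields agree, i.e.\ if and only if the coefficients of each $\partial/\partial x^k$ match in every chart. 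Thus the whole statement reduces to matching first-order coefficients.

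First I would read off the two coefficients. For $\LV$ the coefficient of $\partial/\partial x^k$ (inside the overall factor $1/m$) is exactly the quantity computed in the display immediately preceding the theorem, namely
\[
-\sum_{i,j=1}^d \Gamma^{ijk}[g^{\V}]_{ij} = \sum_{j,l=1}^d {\mathscr{P}^l}_j\,\frac{\partial \beta^{jk}}{\partial x^l} - \frac12\sum_{i,j,l=1}^d \beta^{lk}[g^{\V}]_{ij}\,\frac{\partial \beta^{ij}}{\partial x^l},
\]
whose derivation already used $\mathscr{P}=\beta\circ g$ and $\mathscr{P}^2=\mathscr{P}$ from Proposition \ref{prop.gbetaorthproj}. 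For $\frac1m\diverg^{\w}\gradH$, relabeling the dummy indices in \eqref{eqn.divgradH1} (sending $j\mapsto k$, $i\mapsto l$) gives the coefficient $\sum_{l}\big[\beta^{lk}\tau^{-1}\partial\tau/\partial x^l + \partial\beta^{lk}/\partial x^l\big]$ of $\partial/\partial x^k$. Cancelling the common factor $1/m$ and equating these two expressions for every $k$ is literally \eqref{eqn.LV=divgradH}, which proves the ``if and only if.'' The clause asserting the \emph{existence} of a volume form is then simply the observation that \eqref{eqn.LV=divgradH}, read as a pointwise linear equation for the logarithmic derivative $\tau^{-1}\partial\tau/\partial x^l$ of the density, is what must be solvable for such a $\w$ to exist.

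For the sufficiency statement I would split \eqref{eqn.LV=divgradH} into the terms carrying the factor $\beta^{lk}$ and those that do not, and require these two groups to match separately: matching the $\beta^{lk}$-weighted ``trace'' term against the $\tau$-logarithmic term yields the first identity of \eqref{eqn.LV=divgradH2}, while matching $\sum_{j,l}{\mathscr{P}^l}_j\,\partial\beta^{jk}/\partial x^l$ against $\sum_l\partial\beta^{lk}/\partial x^l$ yields the second. Adding the two matched groups recovers \eqref{eqn.LV=divgradH}, so either pair of identities is sufficient for the operator equality.

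I expect the only real difficulty here to be bookkeeping rather than anything conceptual: one must keep the summation letters in \eqref{eqn.divgradH1} and in the $\LV$ coefficient display consistent, and in the sufficiency split one must take the scalar identity to hold termwise in $l$, so that multiplying by $\beta^{lk}$ and summing genuinely reproduces the $\beta^{lk}$-weighted sum on the left of \eqref{eqn.LV=divgradH}. Beyond that, the argument is a direct comparison of the two local formulas already derived, using no geometric input past Proposition \ref{prop.gbetaorthproj} and Corollary \ref{cor.LLclass}.
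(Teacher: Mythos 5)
Your proposal is correct and follows essentially the same route as the paper: the paper's ``proof'' is precisely the coefficient computation displayed immediately before the theorem, followed by ``rearranging these terms and considering \eqref{eqn.divgradH1},'' i.e.\ equating the first-order coefficients of the two sub-Laplacians whose second-order parts already agree by Corollary \ref{cor.LLclass}. You also correctly identify the one delicate point, that the first identity in \eqref{eqn.LV=divgradH2} must be read termwise in $l$ for the $\beta^{lk}$-weighted sums to match, which is how the paper implicitly uses it in the examples.
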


For the affine group discussed in Example \ref{example.affine}, we see that the Riemannian volume of the standard extended metric agrees with the left Haar measure, however, $\LV$ agrees with the divergence of the gradient against the right Haar measure. In a certain sense, $\LV$ switches handedness in this case, illustrating that the interplay of $\LV$ and a choice of extended metric is not trivially reproducing the divergence of the gradient against the induced Riemannian volume.

\section{Examples} \label{s.examples}
We now demonstrate how to use the results of this paper for the Heisenberg group, $SU(2)$, and the affine group. Note that they represent three different models concerning topology (compact versus non-compact) and unimodularity.

\subsection{Heisenberg Group} \label{example.heisenberg}
Let $\mathbb{H}$ be the Heisenberg group; that is, $\mathbb{H}\cong\mathbb{R}^3$ with the multiplication defined by

\[
\left( x_{1}, y_{1}, z_{1} \right) \star \left( x_{2}, y_{2}, z_{2} \right):=\left( x_{1}+x_{2}, y_{1}+y_{2}, z_{1}+z_{2} + \frac{1}{2}  \omega\left( x_{1}, y_{1}; x_{2}, y_{2}\right)\right),
\]
where $\omega$ is the standard symplectic form

 \[
 \omega\left( x_{1}, y_{1}; x_{2}, y_{2}\right):=x_{1}y_{2} - y_{1}x_{2}.
 \]
We define $X$, $Y$, and $Z$ as the unique left-invariant vector fields with $X_e = \partial_x$, $Y_e = \partial_y$, and $Z_e = \partial_z$. We find
\begin{align*}
&X = \partial_x - \frac{1}{2} y \partial_z,\\
&Y = \partial_y + \frac{1}{2} x \partial_z,\\
&Z = \partial_z.
 \label{eqn.XYheis}
\end{align*}
The horizontal distribution is defined by $\mathcal{H} = \operatorname{span}\{X,Y\}$ (understood fiberwise). We check that $[X,Y] = Z$, so H\"{o}rmander's condition is easily satisfied.
We endow $\mathbb{H}$ with the sub-Riemannian metric $\langle \cdot, \cdot \rangle$ so that $\{X,Y\}$ is an orthonormal frame for the horizontal distribution. The group $\mathbb{H}$ is nilpotent, and therefore it is unimodular. Let $\mu$ be the Haar measure on $\mathbb{H}$.
\begin{lem}\label{lem.HeisHaar}
The Haar measure is given by $\mu = dx \wedge dy \wedge dz$.
\end{lem}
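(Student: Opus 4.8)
The plan is to produce an explicit left-invariant volume form and invoke the general principle, recalled in the discussion preceding Theorem~\ref{thm.divLgradH}, that the top exterior power of a left-invariant coframe induces a left Haar measure. Since $\mathbb{H}$ is nilpotent and hence unimodular, the left and right Haar measures coincide up to a constant, so identifying the left Haar measure suffices to pin down $\mu$ up to normalization.

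First I would dualize the global left-invariant frame $\{X,Y,Z\}$ computed above. Writing the frame in the coordinate basis as $X = \partial_x - \tfrac{1}{2}y\,\partial_z$, $Y = \partial_y + \tfrac{1}{2}x\,\partial_z$, and $Z = \partial_z$, the transition matrix from $\{\partial_x,\partial_y,\partial_z\}$ to $\{X,Y,Z\}$ is unipotent (triangular with $1$'s on the diagonal) and therefore has determinant $1$. Solving for the dual coframe $\{\chi^1,\chi^2,\chi^3\}$ determined by $\chi^i(X_j)=\delta^i_j$ gives $\chi^1 = dx$, $\chi^2 = dy$, and $\chi^3 = \tfrac{1}{2}y\,dx - \tfrac{1}{2}x\,dy + dz$; each identity $\chi^i(X_j)=\delta^i_j$ is a one-line verification. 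Wedging, the lower-order terms of $\chi^3$ are annihilated against $dx\wedge dy$, so $\chi^1\wedge\chi^2\wedge\chi^3 = dx\wedge dy\wedge dz$, which is the asserted form.

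As an independent confirmation, and to re-derive unimodularity concretely, I would instead compute the Jacobian of translation directly. Left translation by $(a,b,c)$ sends $(x,y,z)$ to $\big(a+x,\,b+y,\,c+z+\tfrac{1}{2}(ay-bx)\big)$, whose Jacobian with respect to $(x,y,z)$ is triangular with unit diagonal and hence has determinant $1$; the analogous computation for right translation, $(x,y,z)\mapsto\big(x+a,\,y+b,\,z+c+\tfrac{1}{2}(xb-ya)\big)$, again yields determinant $1$. Thus the Lebesgue form $dx\wedge dy\wedge dz$ is simultaneously left- and right-invariant, confirming both the claim and unimodularity. There is no genuine obstacle here: the only thing to watch is the sign and orientation bookkeeping in the symplectic term of the product and in the wedge, which is precisely why I would cross-check the coframe computation against the Jacobian computation.
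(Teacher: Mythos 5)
Your proof is correct and follows essentially the same route as the paper: compute the dual coframe $\{dx,\ dy,\ \tfrac{1}{2}y\,dx-\tfrac{1}{2}x\,dy+dz\}$ of the left-invariant frame $\{X,Y,Z\}$ and wedge it to obtain $dx\wedge dy\wedge dz$. The additional Jacobian check of left and right translations is a harmless (and correct) cross-verification that also confirms unimodularity, but it is not needed beyond what the coframe computation already gives.
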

\begin{proof}
 By inspection, the dual basis $\{\chi^X, \chi^Y, \chi^Z\}$ of $\{X,Y,Z\}$ is
\begin{align*}
&\chi^X = dx,~~\chi^Y = dy,~~\chi^Z = \frac{1}{2} y \, dx - \frac{1}{2} x \, dy + dz
\end{align*}
and hence $\mu = \chi^X \wedge \chi^Y \wedge \chi^Z = dx \wedge dy \wedge dz$.
\end{proof}

\begin{prop}\label{prop.divgradHeis}
We have
$\displaystyle
\diverg^{\mu}\gradH = X^2 + Y^2.
$
\end{prop}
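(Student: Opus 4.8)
The plan is to read off the result from the general Lie-group computation already done in Theorem \ref{thm.divLgradH}, using only the observation that $\mathbb{H}$ is unimodular. Since $\mathbb{H}$ is nilpotent it is unimodular, so the left and right Haar measures coincide; by Lemma \ref{lem.HeisHaar} this common measure is (a scalar multiple of) $\mu = dx\wedge dy\wedge dz$, and hence $\diverg^{\mu}\gradH = \diverg^{\mu_L}\gradH$. Applying \eqref{eqn.divLgradH} to the left-invariant orthonormal horizontal frame $\{X,Y\}$ gives $\diverg^{\mu}\gradH = X^2+Y^2 - \operatorname{Tr}(\operatorname{ad}X(e))\,X - \operatorname{Tr}(\operatorname{ad}Y(e))\,Y$, so everything reduces to checking that the first-order term $X_{\Delta^L}$ vanishes.

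That vanishing is immediate: Theorem \ref{thm.divLgradH} records that $X_{\Delta^L}\equiv 0$ precisely when $G$ is unimodular, which we already know, so no further computation is strictly needed. If one prefers to see it directly, it follows from the nilpotency of the Heisenberg Lie algebra. From $[X,Y]=Z$ and $[X,Z]=[Y,Z]=0$, the operators $\operatorname{ad}X(e)$ and $\operatorname{ad}Y(e)$ are strictly triangular in the basis $\{X_e,Y_e,Z_e\}$, hence nilpotent, hence traceless; thus $\operatorname{Tr}(\operatorname{ad}X(e)) = \operatorname{Tr}(\operatorname{ad}Y(e)) = 0$ and $X_{\Delta^L}\equiv 0$. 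Either way we conclude $\diverg^{\mu}\gradH = X^2+Y^2$.

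As an independent check (and the more hands-on route a reader may expect in an examples section), I would instead compute in coordinates. By Lemma \ref{lem.HeisHaar} we have $\tau\equiv 1$, so the zeroth-order piece $\tfrac{\beta^{ij}}{\tau}\tfrac{\partial\tau}{\partial x^i}$ of \eqref{eqn.divgradH1} drops out and $\diverg^{\mu}\gradH = \sum_{i,j}\bigl\{\beta^{ij}\,\partial_i\partial_j + (\partial_i\beta^{ij})\,\partial_j\bigr\}$, where the entries $\beta^{ij}$ are obtained from Lemma \ref{l.5.2} as $\beta^{ij}=\sum_{k}dx^i(X_k)\,dx^j(X_k)$ with $X_1=X$, $X_2=Y$. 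One then verifies the frame-dependent first-order term also vanishes, i.e.\ $\sum_i\partial_i\beta^{ij}=0$ for each $j$, which leaves exactly $\sum_{i,j}\beta^{ij}\partial_i\partial_j$; since $X^2+Y^2$ has no first-order part either, Proposition \ref{prop.sumsqbetaij} identifies the two second-order operators and finishes the proof.

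The main obstacle is not conceptual but bookkeeping: the genuine content is confirming that \emph{both} first-order contributions cancel, namely the measure term (handled by $\tau\equiv 1$) and the term $\sum_i\partial_i\beta^{ij}$ coming from the non-constant entries $\beta^{13}=-\tfrac12 y$, $\beta^{23}=\tfrac12 x$, $\beta^{33}=\tfrac14(x^2+y^2)$. This is exactly where a sign or index slip would produce a spurious drift, so this cancellation is the step I would carry out most carefully.
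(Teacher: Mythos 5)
Your proof is correct and follows the same route as the paper, which likewise disposes of this proposition in one line by citing Theorem \ref{thm.divLgradH} together with the unimodularity of $\mathbb{H}$. The extra material you supply --- the traceless-by-nilpotency argument for $X_{\Delta^L}=0$ and the coordinate verification that $\sum_i \partial_i\beta^{ij}=0$ --- is accurate but goes beyond what the paper records.
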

\begin{proof}
Since $\mathbb{H}$ is unimodular, this follows directly from Theorem \ref{thm.divLgradH}.
\end{proof}

\begin{prop}\label{p.7.3}
Let $\mathcal{V} = \operatorname{span}\{Z\}$ be the vertical distribution. Then

\[
\LV  = \frac{1}{2} \diverg^\mu\gradH.
\]
\end{prop}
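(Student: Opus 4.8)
The plan is to apply the main comparison result, Theorem \ref{thm.LV=divgradH}, with $m=2$ and the volume form $\w = \mu = dx\wedge dy\wedge dz$ furnished by Lemma \ref{lem.HeisHaar}, whose density relative to the coordinate volume is $\tau\equiv 1$. Since the asserted identity $\LV = \frac{1}{2}\diverg^{\mu}\gradH$ is precisely $\LV = \frac{1}{m}\diverg^{\w}\gradH$ for this $m$ and $\w$, it suffices to verify the two sufficient conditions \eqref{eqn.LV=divgradH2}. The manifold $\mathbb{H}\cong\mathbb{R}^3$ is oriented, so the hypothesis of the theorem is met.

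First I would assemble the data entering those conditions. Using $\beta^{ij}=\sum_{k=1}^2 dx^i(X_k)\,dx^j(X_k)$ from Lemma \ref{l.5.2} with $X_1=X$, $X_2=Y$, I obtain the matrix $B=(\beta^{ij})$: the entries are $\beta^{11}=\beta^{22}=1$, $\beta^{12}=0$, $\beta^{13}=\beta^{31}=-\tfrac12 y$, $\beta^{23}=\beta^{32}=\tfrac12 x$, and $\beta^{33}=\tfrac14(x^2+y^2)$. The key structural observation is that \emph{every} $\beta^{ij}$ is independent of $z$. To compute $g^{\V}$ and $\mathscr{P}$ I would take the compatible Riemannian metric making $\{X,Y,Z\}$ orthonormal, so that $\V=\operatorname{span}\{Z\}$ is the orthogonal complement of $\Horz$, and read off its Gram matrix $G=(g_{ij})$ from the dual coframe $\{\chi^X,\chi^Y,\chi^Z\}$ of Lemma \ref{lem.HeisHaar}. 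By Corollary \ref{cor.calcLV} one has $g^{\V}=GBG$, and by Proposition \ref{prop.gbetaorthproj} the projection is $\mathscr{P}=\beta\circ g=BG$. A short matrix multiplication then gives $\mathscr{P}\partial_x=X$, $\mathscr{P}\partial_y=Y$, $\mathscr{P}\partial_z=0$, confirming that $\mathscr{P}$ projects onto $\Horz$ along $\V$, together with $g^{\V}=\operatorname{diag}(1,1,0)$, i.e. $[g^{\V}]_{ij}=\delta_{ij}$ for $i,j\le 2$ and $0$ otherwise.

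With these in hand the conditions \eqref{eqn.LV=divgradH2} collapse. Since $\tau\equiv 1$, the right-hand side of the first condition is $0$; its left-hand side pairs $[g^{\V}]_{ij}$ only with $\partial_l\beta^{11}$ and $\partial_l\beta^{22}$, which vanish because $\beta^{11}=\beta^{22}=1$ are constant, so the first condition holds. For the second condition, the nonzero entries of $\mathscr{P}$ reduce its left-hand side to $\partial_x\beta^{1k}+\partial_y\beta^{2k}$, the contributions ${\mathscr{P}^3}_1\,\partial_z\beta^{1k}$ and ${\mathscr{P}^3}_2\,\partial_z\beta^{2k}$ dying by $z$-independence, while its right-hand side $\sum_l\partial_l\beta^{lk}$ equals the same expression once the vanishing term $\partial_z\beta^{3k}$ is dropped. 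Hence both equalities in \eqref{eqn.LV=divgradH2} hold, and Theorem \ref{thm.LV=divgradH} yields $\LV=\frac{1}{2}\diverg^{\mu}\gradH$.

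The main obstacle is purely computational, namely correctly forming $G$ from the non-left-invariant coordinate frame and checking the matrix identity $g^{\V}=GBG=\operatorname{diag}(1,1,0)$; it is tamed entirely by the single fact that all $\beta^{ij}$ are $z$-independent, which is what forces both drift-type conditions to reduce to trivial identities. As an alternative route I could bypass Theorem \ref{thm.LV=divgradH} and argue directly from Corollary \ref{cor.calcLV}: the coefficient of $\partial/\partial x^k$ in $\LV$ is $-\sum_{i,j}\Gamma^{ijk}[g^{\V}]_{ij}=-(\Gamma^{11k}+\Gamma^{22k})$, and since $\beta^{11},\beta^{22}$ are constant while the surviving $\beta^{1l},\beta^{2l}$ are $z$-independent, each raised Christoffel symbol $\Gamma^{11k}$ and $\Gamma^{22k}$ vanishes. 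This leaves $\LV=\frac{1}{2}\sum_{i,j}\beta^{ij}\,\partial^2/\partial x^i\partial x^j=\frac{1}{2}(X^2+Y^2)=\frac{1}{2}\diverg^{\mu}\gradH$, the last equality by Proposition \ref{prop.divgradHeis}.
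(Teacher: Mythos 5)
Your proposal is correct and follows essentially the same route as the paper, which likewise verifies the sufficient conditions \eqref{eqn.LV=divgradH2} of Theorem \ref{thm.LV=divgradH} using $\tau\equiv 1$, $[g^{\V}]=GBG=\operatorname{diag}(1,1,0)$, and $\mathscr{P}=BG$ (this is the paper's ``Proof 2''). Your closing alternative via Corollary \ref{cor.calcLV} and the vanishing of $\Gamma^{11k}+\Gamma^{22k}$ is just an explicit version of the paper's ``Proof 1,'' which cites the direct computation $\LV=\tfrac12(X^2+Y^2)$ from \cite{GordinaLaetsch2014a} and compares with Proposition \ref{prop.divgradHeis}.
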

\begin{proof} We present two proofs below. One is based on a direct computation of both operators in question, while the second proof is an application of Theorem \ref{thm.LV=divgradH}.

\emph{Proof 1.} Using \eqref{eqn.LVlocal}, it is shown in \cite{GordinaLaetsch2014a} that $\LV = \frac{1}{2}(X^2 +Y^2)$. Comparing this to Proposition \ref{prop.divgradHeis} yields the desired result, and hence $\LV = \frac{1}{2} \diverg^{\mu} \gradH$.

\emph{Proof 2.} Let $g$ be a metric which extends the sub-Riemannian metric so that $\{X,Y,Z\}$ is an orthonormal frame of $TM$; note that with respect to $g$, $\mathcal{V} = \mathcal{H}^{\perp}$. Let $B$ be the matrix $(\beta^{ij})$ and $G$ be the matrix $(g_{ij})$ in standard coordinates. We have
\[
B =
\begin{pmatrix}
1 & 0 & -\frac{y}{2} \\
0 & 1 & \frac{x}{2} \\
-\frac{y}{2} & \frac{x}{2} & \frac{x^2 + y^2}{4}
\end{pmatrix}
~~
\text{ and }
~~
G =
\begin{pmatrix}
1+ \frac{ y^2}{4} & -\frac{ xy}{4} & \frac{ y}{2} \\
-\frac{ xy}{4} & 1 + \frac{ x^2}{4} & -\frac{ x}{2} \\
\frac{ y}{2} & -\frac{ x}{2} & 1
\end{pmatrix}.
\]
Using Proposition \ref{prop.distinguishedV} and Corollary \ref{cor.calcLV}, we see that
\[
[g^{\V}] = GBG = \begin{pmatrix}
1 & 0 & 0 \\
0 & 1 & 0 \\
0 & 0 & 0
\end{pmatrix}.
\]
The matrix representing the projection $\mathscr{P} = \beta \circ g$ onto $\mathcal{H}$ along $\mathcal{V}$ is
\[
P = BG =
\begin{pmatrix}
1 & 0 & 0 \\
0 & 1 & 0 \\
-\frac{y}{2} & \frac{x}{2} & 0
\end{pmatrix}.
\]
From this, we have
\[
\sum\limits_{j,l=1}^3{P^l}_j \frac{\partial B^{jk}}{\partial x^l} = 0 =  \sum\limits_{l=1}^3 \frac{\partial B^{lk}}{\partial x^l}\]
and
\[
\sum\limits_{i,j,l=1}^3 [g^{\V}]_{ij} \frac{\partial \beta^{ij}}{\partial x^l} = 0 = \frac{1}{\tau} \sum\limits_{l =1}^3 \frac{\partial \tau}{\partial x^l}
\]
for any $\tau \equiv \text{constant}$. Therefore, \eqref{eqn.LV=divgradH2} is easily satisfied and we learn that $\LV = \frac{1}{2} \diverg^{\mu} \gradH$.
\end{proof}

\subsection{SU(2)} \label{example.SU2}
$SU(2)$ is a compact connected unimodular Lie group, diffeomorphic to the $3$-sphere $S^3$. One identification of $SU(2)$ is as the group under matrix multiplication of the following space of matrices
\[
SU(2) = \left\{ \begin{pmatrix} a & -\bar{b} \\ b & \bar{a} \end{pmatrix} : a, b \in \mathbb{C} \right\}
\]
We use Euler angles as our standard coordinates $\{\theta, \phi, \psi\}$ with the convention that $x^1 = \theta$, $x^2 = \phi$, and $x^3 = \psi$. While a typical convention is that the first and second coordinates are swapped from ours here, but we choose this convention to simplify the appearance of some of the later calculations.
Let $X$, $Y$, and $Z$ be given by
\begin{equation}\label{eqn.su2D1D2D3}
\begin{aligned}
&X = \cos \psi \, \partial_{\theta} + \frac{\sin \psi}{\sin \theta} \,\partial_{\phi} - \cos \theta \,\frac{\sin \psi}{\sin \theta} \, \partial_{\psi} \\
&Y = -\sin \psi \, \partial_{\theta} + \frac{\cos \psi}{\sin \theta} \, \partial_{\phi} - \cos \theta \, \frac{ \cos \psi}{\sin \theta} \, \partial_{\psi} \\
&Z = \partial_{\psi}.
\end{aligned}
\end{equation}
We define the horizontal distribution as $\mathcal{H} = \operatorname{span}\{X, Y\}$, and the sub-Riemannian metric $\langle \cdot, \cdot \rangle$ such that the collection $\{X,Y\}$ forms an orthonormal frame. Since $SU(2)$ is compact, it is unimodular. Let $\mu$ be the Haar measure on $SU(2)$.

\begin{lem} \label{lem.SU2Haar}
We have $\mu = \sin(\theta) \, d\theta \wedge d\phi \wedge d\psi.$
\end{lem}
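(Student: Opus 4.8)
The plan is to mirror the proof of Lemma~\ref{lem.HeisHaar}: because $SU(2)$ is unimodular and $\{X,Y,Z\}$ is a frame of left-invariant vector fields, the wedge $\chi^X \wedge \chi^Y \wedge \chi^Z$ of the dual coframe is a left-invariant (hence bi-invariant) volume form, and so induces the Haar measure with the normalization implicitly fixed by our choice of frame. Thus it suffices to express $\chi^X \wedge \chi^Y \wedge \chi^Z$ in terms of $d\theta \wedge d\phi \wedge d\psi$.

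First I would record the matrix $M$ whose rows are the coefficients of $X$, $Y$, $Z$ against $\partial_\theta, \partial_\phi, \partial_\psi$, read off directly from \eqref{eqn.su2D1D2D3}. The key elementary fact is that if the frame is $X_a = \sum_i M_{ai}\,\partial/\partial x^i$, then the dual coframe has coefficient matrix $(M^{-1})^{\mathrm{tr}}$, so that
\[
\chi^X \wedge \chi^Y \wedge \chi^Z = \frac{1}{\det M}\; d\theta \wedge d\phi \wedge d\psi .
\]
Hence the whole computation reduces to evaluating $\det M$. Expanding along the last row, which is simply $(0,0,1)$ since $Z = \partial_\psi$, collapses the $3\times 3$ determinant to the $2\times 2$ minor coming from $X$ and $Y$; both surviving products carry a common factor $1/\sin\theta$, and the combination that remains is $\cos^2\psi + \sin^2\psi$, giving $\det M = 1/\sin\theta$ and therefore $\mu = \sin\theta\, d\theta \wedge d\phi \wedge d\psi$.

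Alternatively, and in closer parallel with Lemma~\ref{lem.HeisHaar}, I would invert the system to obtain the coframe explicitly, namely $\chi^X = \cos\psi\, d\theta + \sin\theta \sin\psi\, d\phi$, $\chi^Y = -\sin\psi\, d\theta + \sin\theta\cos\psi\, d\phi$, and $\chi^Z = \cos\theta\, d\phi + d\psi$, and then wedge directly: the $d\theta\wedge d\phi$ coefficient of $\chi^X\wedge\chi^Y$ again reduces to $\sin\theta(\cos^2\psi+\sin^2\psi)$, and wedging with $\chi^Z$ annihilates the $\cos\theta\, d\phi$ term. There is essentially no conceptual obstacle here; the only points deserving a word of care are that $\{X,Y,Z\}$ really are left-invariant, so that the dual volume form is genuinely Haar, which one takes from the construction (or confirms via the bracket relations), and the coordinate degeneracy of Euler angles where $\sin\theta = 0$. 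The latter is harmless: the resulting form $\sin\theta\, d\theta\wedge d\phi\wedge d\psi$ is globally smooth, and the identity established on the open dense set where the Euler chart is nondegenerate extends by continuity.
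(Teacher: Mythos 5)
Your proposal is correct and follows essentially the same route as the paper: the paper also identifies the dual coframe $\chi^X = \cos\psi\,d\theta + \sin\theta\sin\psi\,d\phi$, $\chi^Y = -\sin\psi\,d\theta + \sin\theta\cos\psi\,d\phi$, $\chi^Z = \cos\theta\,d\phi + d\psi$ and wedges to get $\sin\theta\,d\theta\wedge d\phi\wedge d\psi$. Your determinant shortcut $\chi^X\wedge\chi^Y\wedge\chi^Z = (\det M)^{-1}\,d\theta\wedge d\phi\wedge d\psi$ is just a compact way of organizing the same computation, and your remarks on left-invariance and the Euler-angle degeneracy are sensible but not needed beyond what the paper already assumes.
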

\begin{proof}
 By inspection we find that the dual frame $\{\chi^X, \chi^Y, \chi^Z\}$ to $\{X,Y,Z\}$ is
\begin{align*}
&\chi^X = \cos\psi \, d\theta + \sin\theta \sin\psi \, d \phi,  \\
&\chi^Y = -\sin\psi \, d\theta + \sin\theta \cos\psi \, d\phi,  \\
&\chi^Z = \cos \theta \, d\phi + d\psi,
\end{align*}
and hence
$\displaystyle \mu =  \chi^X \wedge \chi^Y \wedge \chi^Z = \sin\theta \, d\theta \wedge d\phi \wedge d\psi.$
\end{proof}

\begin{prop}
We have $\diverg^{\mu} \gradH = X^2 + Y^2$.
\end{prop}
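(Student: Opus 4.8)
The plan is to argue exactly as in the Heisenberg case of Proposition \ref{prop.divgradHeis}, reducing everything to Theorem \ref{thm.divLgradH}. The essential observation is that the three vector fields $X$, $Y$, $Z$ exhibited in \eqref{eqn.su2D1D2D3} are left-invariant (constructed as such, just as in the Heisenberg example), so that $\{X,Y\}$ is a left-invariant orthonormal horizontal frame in precisely the sense demanded by Theorem \ref{thm.divLgradH}. I would first record this left-invariance. It is in fact already implicit in Lemma \ref{lem.SU2Haar}: the dual coframe $\{\chi^X,\chi^Y,\chi^Z\}$ computed there is left-invariant, and its top wedge $\chi^X \wedge \chi^Y \wedge \chi^Z = \sin\theta\, d\theta \wedge d\phi \wedge d\psi$ is a left-invariant volume form, which is exactly how the left Haar measure $\mu$ is identified.

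Next, since $SU(2)$ is compact it is unimodular, so $\mu_L = \mu_R = \mu$ and the modular function is trivial. Theorem \ref{thm.divLgradH} then applies directly with $\Delta^L = \diverg^{\mu}\gradH$: unimodularity forces the first-order field $X_{\Delta^L} \equiv 0$, and the theorem yields $\diverg^{\mu}\gradH = X^2 + Y^2$. This is the entire content of the argument; no new computation beyond Lemma \ref{lem.SU2Haar} is needed.

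The only point requiring any care, and the one I would verify up front, is the left-invariance of the frame \eqref{eqn.su2D1D2D3}; everything downstream is immediate from unimodularity. As an independent consistency check that avoids the structure theory altogether, one could instead run the direct computation through the local formula \eqref{eqn.divgradH1}: assemble $B = (\beta^{ij})$ from $X$ and $Y$ in the coordinates $(\theta,\phi,\psi)$, set $\tau = \sin\theta$, and confirm that every first-order coefficient $\tfrac{\beta^{ij}}{\tau}\frac{\partial \tau}{\partial x^i} + \frac{\partial \beta^{ij}}{\partial x^i}$ vanishes. That route is purely computational and I expect the (minor) bookkeeping there to be the main friction; this is exactly why the structural route through Theorem \ref{thm.divLgradH} is the cleaner one to present.
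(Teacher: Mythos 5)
Your argument is correct and is essentially identical to the paper's own proof: the paper likewise notes that $SU(2)$ is unimodular (being compact) and invokes Theorem \ref{thm.divLgradH} to conclude $\diverg^{\mu}\gradH = X^2 + Y^2$, with the left-invariance of the frame \eqref{eqn.su2D1D2D3} taken as part of the setup. The extra consistency check via \eqref{eqn.divgradH1} is fine but not needed.
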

\begin{proof}
Since $SU(2)$ is unimodular, this follows directly from Theorem \ref{thm.divLgradH}.
\end{proof}

\begin{prop}
Let $\mathcal{V} = \operatorname{span}\{Z\}$ be the vertical distribution. Then

\[
\LV = \frac{1}{2} \diverg^{\mu} \gradH.
\]
\end{prop}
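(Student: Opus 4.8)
The plan is to mirror the second proof of Proposition \ref{p.7.3} and reduce everything to verifying the sufficient conditions \eqref{eqn.LV=divgradH2} of Theorem \ref{thm.LV=divgradH}, taking $m = 2$ and $\tau = \sin\theta$, the density of the Haar measure $\mu$ from Lemma \ref{lem.SU2Haar}. First I would fix the compatible Riemannian metric $g$ making $\{X,Y,Z\}$ orthonormal, so that $\V = \Horz^{\perp}$; by Proposition \ref{prop.distinguishedV} this is the legitimate choice for computing $g^{\V}$. Reading the matrix of $g$ off the dual coframe in Lemma \ref{lem.SU2Haar} and assembling $\beta^{ij} = dx^i(X)dx^j(X) + dx^i(Y)dx^j(Y)$ from Lemma \ref{l.5.2}, I obtain in the coordinates $(\theta,\phi,\psi)$
\[
G = \begin{pmatrix} 1 & 0 & 0 \\ 0 & 1 & \cos\theta \\ 0 & \cos\theta & 1 \end{pmatrix}
\qquad\text{and}\qquad
B = \begin{pmatrix} 1 & 0 & 0 \\ 0 & \tfrac{1}{\sin^2\theta} & -\tfrac{\cos\theta}{\sin^2\theta} \\ 0 & -\tfrac{\cos\theta}{\sin^2\theta} & \tfrac{\cos^2\theta}{\sin^2\theta} \end{pmatrix}.
\]

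Next, following Corollary \ref{cor.calcLV} and Proposition \ref{prop.gbetaorthproj}, I would compute the two matrices entering \eqref{eqn.LV=divgradH2}: the map $[g^{\V}] = GBG$ and the projection $P = BG$ representing $\mathscr{P}$. A direct multiplication gives $[g^{\V}] = \operatorname{diag}(1,\sin^2\theta,0)$ and
\[
P = \begin{pmatrix} 1 & 0 & 0 \\ 0 & 1 & 0 \\ 0 & -\cos\theta & 0 \end{pmatrix}.
\]
The structural observation that makes the verification painless is that every $\beta^{ij}$ depends on $\theta = x^1$ alone, so in both conditions of \eqref{eqn.LV=divgradH2} only the $l=1$ term survives.

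With these in hand the conditions check out immediately. For the projection condition, among the surviving $l=1$ terms only ${P^1}_1 = 1$ multiplies a nonzero derivative, and since each $\beta^{1k}$ is constant in $\theta$, both sides vanish identically. For the density condition, the diagonal form of $[g^{\V}]$ collapses the left side to $-\tfrac12\,[g^{\V}]_{22}\,\partial_\theta\beta^{22} = -\tfrac12\sin^2\theta\cdot(-2\cos\theta/\sin^3\theta) = \cos\theta/\sin\theta$, which matches the right side $\tfrac{1}{\tau}\partial_\theta\tau = \cot\theta$ precisely because $\tau = \sin\theta$. Theorem \ref{thm.LV=divgradH} then yields $\LV = \tfrac12\diverg^{\mu}\gradH$.

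The bulk of the labor is the routine but error-prone matrix computation of $GBG$ and $BG$, while the conceptual crux is the density condition: the factor $\sin\theta$ in the Haar measure of Lemma \ref{lem.SU2Haar} is exactly what balances the $\theta$-dependence of $\beta^{22}$, which is why $\LV$ lands on the divergence against $\mu$ and not against some other volume form. As an alternative I could instead note that, $SU(2)$ being unimodular, the previous proposition already gives $\diverg^{\mu}\gradH = X^2 + Y^2$, so it would suffice to confirm $\LV = \tfrac12(X^2+Y^2)$ directly from Corollary \ref{cor.calcLV} (the analogue of Proof 1 of Proposition \ref{p.7.3}); the route through \eqref{eqn.LV=divgradH2} is preferable since it sidesteps computing the raised Christoffel symbols $\Gamma^{ijk}$ explicitly.
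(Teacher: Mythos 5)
Your proposal is correct and follows essentially the same route as the paper's second proof: the matrices $G$ (with $\lambda=1$), $B$, $P=BG$, and $[g^{\V}]=GBG=\operatorname{diag}(1,\sin^2\theta,0)$ all agree with the paper's, and the verification of \eqref{eqn.LV=divgradH2} with $\tau=\sin\theta$ is exactly the paper's argument. The alternative you mention at the end is the paper's Proof 1.
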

\begin{proof}
As in the Heisenberg case, we present two proofs below. The first proof is based on a direct computation of both operators, while the second proof is an application of Theorem \ref{thm.LV=divgradH}.
 Let $\lambda > 0$ and $g = g_{\lambda}$ be a compatible Riemannian metric making $\{X, Y, Z \}$ orthogonal such that $\lambda : = g( Z, Z )$. The introduction of arbitrary $\lambda$, rather than fixing some value, say $\lambda = 1$, is purely pedagogical to illustrate how Proposition \ref{prop.distinguishedV} manifests within our calculation. Letting $B$ and $G$ be the $3 \times 3$ matrices representing $\beta$ and $g$ in the $\{\theta, \phi, \psi\}$ coordinates respectively,
\begin{equation} \label{eqn.su2B}
B = \begin{pmatrix}
1 & 0 & 0 \\
0 & \frac{1}{\sin^2 \theta} & - \frac{\cos \theta}{\sin^2 \theta} \\
0 & - \frac{\cos \theta}{\sin^2 \theta} & \frac{\cos^2 \theta}{\sin^2 \theta}
\end{pmatrix}
~~\text{ and }~~ G = \begin{pmatrix}
1 & 0 & 0\\
0 & \sin^2 \theta + \lambda \cos^2 \theta & \lambda \cos \theta \\
0 & \lambda \cos \theta & \lambda
\end{pmatrix}
\end{equation}

\emph{Proof 1.} First, using Proposition \ref{prop.distinguishedV} and Corollary \ref{cor.calcLV} we see that
\begin{equation}\label{eqn.su2GBG}
[g^{\V}] = GBG =  \begin{pmatrix}
1 & 0 & 0 \\
0 & \sin^2 \theta & 0 \\
0 & 0 & 0
\end{pmatrix}.
\end{equation}

From \eqref{eqn.raisedChristSR} it becomes apparent that the only non-zero term of $\Gamma^{ijk}$ when $i=j=1$ or $i=j=2$ is $\Gamma^{221} =  -\frac{\cos\theta}{\sin^3\theta}$. Hence
\[
\sum\limits_{i,j=1}^d \Gamma^{ijk} [g^{\V}]_{ij} \,  \frac{\partial}{\partial x^k} = \Gamma^{221}[g^{\V}]_{22}\, \partial_{\theta} = - \frac{\cos \theta}{\sin \theta} \, \partial_{\theta}.
\]
We therefore deduce
\begin{equation} \label{eqn.LVsu2}
\begin{aligned}
\mathcal{L}^{\V} &= \frac{1}{2} \sum\limits_{i,j=1}^d \left\{ \beta^{ij} \frac{\partial^2}{\partial x^i \partial x^j} - \sum\limits_{k=1}^d \Gamma^{ijk}[g^{\V}]_{ij} \frac{\partial}{\partial x^k} \right\} \\
& =\frac{1}{2} \left\{ \partial^2_{\theta} + \frac{1}{\sin^2 \theta} \partial^2_{\phi} + \frac{\cos^2 \theta}{\sin^2 \theta} \partial^2_{\psi} - 2 \frac{\cos \theta}{\sin^2 \theta} \partial_{\phi}\partial_{\psi} + \frac{\cos \theta}{\sin \theta} \partial_{\theta} \right\} \\
& = \frac{1}{2} (X^2 + Y^2)
\end{aligned}
\end{equation}

From Theorem \ref{thm.divLgradH} or \ref{thm.divRgradH}, this implies that if $\mu$ is the Haar measure, then $\diverg^{\mu} \gradH = X^2 + Y^2$. From \eqref{eqn.LVsu2}, it is clear that $\LV = \frac{1}{2} \diverg^{\mu} \gradH$.

\emph{Proof 2.} In local coordinates, the matrix of the projection $\mathscr{P} = \beta \circ g$ onto $\mathcal{H}$ along $\mathcal{V}$ is
\[
P = BG = \begin{pmatrix}
1 & 0 & 0 \\
0 & 1 & 0 \\
0 & - \cos \theta & 0
\end{pmatrix}
\]
Since $\mu = \sin(\theta) \,d\theta \wedge d\phi \wedge d\psi$, we easily check that \eqref{eqn.LV=divgradH2} is satisfied with $\tau = \sin \theta$. We have,
\begin{align*}
-\frac{1}{2} \sum\limits_{i,j, l = 1}^3 [GBG]_{ij} \frac{\partial B^{ij}}{\partial x^l} = -\frac{1}{2} \sin^2 \theta [ - 2 \sin^{-3} \theta \cos \theta]   = \frac{\cos \theta}{\sin\theta}  \delta_{\theta, x^l} \\
= \frac{1}{\sin \theta} \frac{\partial \sin \theta}{\partial \theta}  = \frac{1}{\tau} \sum\limits_{l=1}^3 \frac{\partial \tau}{\partial x^l},
\end{align*}
and clearly for any $l = 1, 2, 3$,
$\sum\limits_{j=1}^3 {P^l}_j\, \frac{\partial B^{ik}}{\partial x^l} = 0 = \frac{\partial B^{lk}}{\partial x^l}.
$ Therefore, from Theorem \ref{thm.LV=divgradH}, we deduce $\LV = \frac{1}{2} \diverg^{\mu} \gradH$.
\end{proof}

\subsection{Affine Group} \label{example.affine}
Let $G = (0,\infty) \times \mathbb{R}^2$ be the group with operation
\[ (a,b,c)\star(x,y,z) = (ax, ay+b, z + c).\]
It is easy enough to check that the identity is $e = (1,0,0)$. Moreover, $G$ is a Lie group with Lie algebra generated by
\[
\mathfrak{g} = \operatorname{span}\{ X|_e, Y|_e, Z|_e\}
\]
with $X|_e = \partial_x |_e$, $Y|_e = (\partial_y + \partial_z) |_e$, and $Z|_e = \partial_y |_e$. Extending these to left invariant vector fields, we have
\[
X(x,y,z) = x \partial_x, ~Y(x,y,z) = x \partial_y + \partial_z, ~\text{and}~Z(x,y,z) = x \partial_y.
\]
We give $G$ a sub-Riemannian structure by defining $\mathcal{H} = \operatorname{span}\{X, Y\}$ with inner-product $\langle \cdot, \cdot \rangle$ making $\{X,Y\}$ a (global) orthonormal frame.
Note that $[X,Y] = Z$, so H\"{o}rmander's condition is easily satisfied.
The affine group $G$ is not unimodular. Let $\mu_L$ and $\mu_R$ be the left and right Haar measures, respectively.

\begin{lem}
We have $\mu_L = x^{-2} dx\wedge dy \wedge dz$ and $\mu_R = x^{-1} dx\wedge dy \wedge dz$.
\end{lem}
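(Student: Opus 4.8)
The plan is to construct left- and right-invariant volume forms directly from the given left-invariant frame, since a left-invariant (resp. right-invariant) top-degree form on a Lie group integrates to a left (resp. right) Haar measure, unique up to a positive scalar. Because the frame $\{X,Y,Z\}$ is by construction left-invariant, the wedge of its dual coframe will automatically give $\mu_L$; the right-invariant form will require slightly more care.

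First I would compute the dual coframe $\{\chi^X,\chi^Y,\chi^Z\}$ to the frame $\{X,Y,Z\}$ given by $X = x\,\partial_x$, $Y = x\,\partial_y + \partial_z$, $Z = x\,\partial_y$. Inverting the coefficient matrix
\[
\begin{pmatrix} X \\ Y \\ Z \end{pmatrix}
=
\begin{pmatrix} x & 0 & 0 \\ 0 & x & 1 \\ 0 & x & 0 \end{pmatrix}
\begin{pmatrix} \partial_x \\ \partial_y \\ \partial_z \end{pmatrix},
\]
one reads off $\chi^X = x^{-1}\,dx$, and $\chi^Y = dz$, $\chi^Z = x^{-1}\,dy - dz$ (or the analogous expressions obtained from the inverse matrix). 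The left-invariant volume form is then
\[
\mu_L = \chi^X \wedge \chi^Y \wedge \chi^Z = x^{-1}\,dx \wedge dz \wedge (x^{-1}\,dy) = x^{-2}\, dx \wedge dy \wedge dz,
\]
up to sign and the usual normalizing constant, which proves the first claim. Since $\{X,Y,Z\}$ is a global left-invariant frame, this form is left-invariant and hence gives the left Haar measure.

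For $\mu_R$ I would instead build a right-invariant frame, i.e. take the vector fields whose value at $e$ equals $X|_e,Y|_e,Z|_e$ but which are extended by right translations $(R_x)_*$, and dualize those; alternatively, and more efficiently, I would invoke the relation $\mu_R(dx) = \mfmi(x)\,\mu_L(dx)$ once the modular function is identified. Concretely one computes the right-invariant extensions, finds the corresponding right-invariant coframe, and wedges to obtain $\mu_R = x^{-1}\,dx \wedge dy \wedge dz$ up to scale. As a consistency check, comparing the two results gives $\mfmi(x) = \mu_R/\mu_L = x$, matching the expectation that $G$ is non-unimodular. The main obstacle is purely bookkeeping: one must be careful to extend by the correct (left versus right) translation and to track the sign and ordering of the wedge products so that the ratio $\mu_R/\mu_L$ comes out to the genuine modular function rather than its inverse; the underlying computation is a routine matrix inversion with no conceptual difficulty.
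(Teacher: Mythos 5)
Your proposal is correct and follows essentially the same route as the paper: compute the dual coframe $\chi^X = x^{-1}\,dx$, $\chi^Y = dz$, $\chi^Z = x^{-1}\,dy - dz$ by inverting the coefficient matrix of the left-invariant frame, wedge to get $\mu_L = x^{-2}\,dx\wedge dy\wedge dz$ (up to the sign fixed by reordering the factors), and obtain $\mu_R$ by the analogous computation with the right-invariant frame. Your sanity check that $\mu_R/\mu_L = x$ recovers $\mathfrak{m}_i$ is exactly the identification the paper makes in the proposition that follows.
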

\begin{proof}
By inspection, the dual frame $\{\chi^X,\chi^Y,\chi^Z\}$ to $\{X,Y,Z\}$ is
\begin{align*}
&\chi^X = x^{-1}\,dx,~~ \chi^Y = dz, ~~\chi^Z = x^{-1}\,dy - dz
\end{align*}
From this we find the left Haar measure $\mu_L = \chi^X \wedge \chi^Z \wedge \chi^Y = x^{-2} dx\wedge dy \wedge dz$. The analogous  calculation gives that the right Haar measure is $\mu_R = x^{-1}\,dx\wedge dy\wedge dz$.
\end{proof}

\begin{prop}
We have $\diverg^{\mu_L}\gradH = X^2 + Y^2 - X$ and $\diverg^{\mu_R} \gradH = X^2 + Y^2$.
\end{prop}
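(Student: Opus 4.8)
The plan is to apply Theorems \ref{thm.divLgradH} and \ref{thm.divRgradH} directly to the affine group, using the explicit left-invariant frame already computed. The heart of the matter is to evaluate the correction vector fields $X_{\Delta^L}$ and $X_{\Delta^R}$ given in those theorems in terms of the structure constants of $\mathfrak{g}$.

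\emph{First} I would assemble the algebraic data. From the given frame $X = x\partial_x$, $Y = x\partial_y + \partial_z$, $Z = x\partial_y$, I would compute the bracket relations at the Lie algebra level. We are told $[X,Y] = Z$; one also checks $[X,Z] = Z$ and $[Y,Z] = 0$ (since $Y$ and $Z$ differ by $\partial_z$, which commutes with $x\partial_y$). With these brackets in hand, I would write out the matrices of $\operatorname{ad}X(e)$ and $\operatorname{ad}Y(e)$ on the basis $\{X|_e, Y|_e, Z|_e\}$ and read off their traces. The key numbers are $\operatorname{Tr}(\operatorname{ad}X(e))$ and $\operatorname{Tr}(\operatorname{ad}Y(e))$: since $\operatorname{ad}X$ sends $Y \mapsto Z$ and $Z \mapsto Z$ while fixing the $X$-direction trivially, I expect $\operatorname{Tr}(\operatorname{ad}X(e)) = 1$, whereas $\operatorname{ad}Y$ annihilates $Z$ and sends $X \mapsto -Z$, giving $\operatorname{Tr}(\operatorname{ad}Y(e)) = 0$.

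\emph{For the left Haar measure}, Theorem \ref{thm.divLgradH} gives immediately
\[
\diverg^{\mu_L}\gradH = X^2 + Y^2 - \operatorname{Tr}(\operatorname{ad}X(e))\,X - \operatorname{Tr}(\operatorname{ad}Y(e))\,Y = X^2 + Y^2 - X,
\]
which is exactly the claimed formula once the two traces are substituted. \emph{For the right Haar measure} I would use Theorem \ref{thm.divRgradH}, which requires in addition the quantities $\mfm\, X_k(\mfmi)$ for $k$ running over the horizontal indices. Since $\mu_R = \mfmi\,\mu_L$ and we computed $\mu_R = x^{-1}\,dx\wedge dy\wedge dz$, $\mu_L = x^{-2}\,dx\wedge dy\wedge dz$, the modular factor is $\mfmi(x,y,z) = x$, so $\mfm = x^{-1}$. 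Then $X(\mfmi) = x\partial_x(x) = x$ and $Y(\mfmi) = (x\partial_y + \partial_z)(x) = 0$, whence $\mfm\,X(\mfmi) = 1$ and $\mfm\,Y(\mfmi) = 0$. Plugging into the theorem, the coefficient of $X$ is $\mfm X(\mfmi) - \operatorname{Tr}(\operatorname{ad}X(e)) = 1 - 1 = 0$ and the coefficient of $Y$ is $0 - 0 = 0$, so both first-order corrections vanish and $\diverg^{\mu_R}\gradH = X^2 + Y^2$.

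\emph{The main obstacle} is purely bookkeeping: correctly computing the brackets $[X,Z]$ and the traces of the adjoint maps, and keeping straight that the sums in both theorems run only over the \emph{horizontal} indices $k \in \{1,2\}$ (so $Z$ contributes no $X_k^2$ term and no trace correction). A subtle consistency point worth flagging is the remark following Theorem \ref{thm.divRgradH}: the affine group is \emph{not} unimodular (indeed $\mfmi = x \not\equiv 1$), yet $X_{\Delta^R} \equiv 0$ — this is precisely the announced asymmetry, and the cancellation $\mfm X(\mfmi) = \operatorname{Tr}(\operatorname{ad}X(e)) = 1$ is what makes it happen. I would double-check the trace computations against this expected cancellation as a sanity check rather than treating the two formulas as independent.
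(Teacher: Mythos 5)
Your proposal is correct and follows essentially the same route as the paper: both apply Theorems \ref{thm.divLgradH} and \ref{thm.divRgradH} directly, computing $\operatorname{Tr}(\operatorname{ad}X(e))=1$, $\operatorname{Tr}(\operatorname{ad}Y(e))=0$ from the brackets and then the modular-function correction $\mfm\,X(\mfmi)=1$, $\mfm\,Y(\mfmi)=0$ for the right Haar case. The bracket computations, traces, and the cancellation giving $X_{\Delta^R}=0$ all check out.
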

\begin{proof}
Using Theorem \ref{thm.divLgradH}, we deduce that
\[
X_{\Delta^L} = - \Big[  \Big(\chi^X[X,Y] + \chi^Z[X,Z]\Big) X + \Big( \chi^X[Y,X] + \chi^Z[Y,Z] \Big) \Big] = - X,
\]
showing that $\diverg^{\mu_L} \gradH = X^2+Y^2-X$.

For the right Haar measure, note that $d\mu_R = \mathfrak{m}_i d\mu_L$, implying that $\mathfrak{m}_i (x,y,z) = x$ and $\mathfrak{m}(x,y,z) = x^{-1}$. From Theorem \ref{thm.divRgradH},
\begin{align*}
X_{\Delta^R} = \mathfrak{m}\big[X(\mathfrak{m}_i) \,X + Y(\mathfrak{m}_i)\,Y \big] - X = x^{-1}[xX + 0] - X = 0,
\end{align*}
confirming that $\diverg^{\mu_R}\gradH = X^2+Y^2$.
\end{proof}

\begin{prop} \label{prop.LVaffine}
Let $\mathcal{V} = \operatorname{span}\{Z\}$ be the vertical distribution. Then

\[
\LV = \frac{1}{2} \diverg^{\mu_R} \gradH = \frac{1}{2}(X^2+Y^2).
\]
\end{prop}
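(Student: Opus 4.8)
The plan is to reproduce the two-proof strategy of the Heisenberg and $SU(2)$ examples: one proof by direct evaluation of the local formula \eqref{eqn.LVlocal}, and one by verifying the hypotheses of Theorem \ref{thm.LV=divgradH}. Both rest on the same preliminary data. First I would read off $B = (\beta^{ij})$ in the coordinates $(x,y,z)$ using $\beta^{ij} = \sum_{k=1}^{2} dx^i(X_k)\,dx^j(X_k)$; since $X = x\partial_x$ and $Y = x\partial_y + \partial_z$, the only nonzero entries are $\beta^{11} = \beta^{22} = x^2$, $\beta^{23} = \beta^{32} = x$, and $\beta^{33} = 1$. Then I would fix the compatible metric $g$ that makes $\{X,Y,Z\}$ orthonormal and record $G = (g_{ij})$, obtained by inverting the frame (equivalently $G = (A A^{tr})^{-1}$, where the columns of $A$ are $X$, $Y$, $Z$ written in the coordinate frame). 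With $B$ and $G$ available, Proposition \ref{prop.distinguishedV} and Corollary \ref{cor.calcLV} give $[g^{\V}] = GBG$ and the projection matrix $P = BG$ of $\mathscr{P} = \beta \circ g$; a short computation shows $[g^{\V}]$ is diagonal with entries $x^{-2}, 0, 1$, while $P$ has nonzero entries only in its first and third columns.

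For the direct proof, I would compute the raised Christoffel symbols $\Gamma^{ijk}$ from \eqref{eqn.raisedChristSR}. Since every entry of $B$ depends on $x$ alone, only $\partial/\partial x$ derivatives contribute, and since $[g^{\V}]$ is diagonal the first-order coefficient $-\sum_{i,j}\Gamma^{ijk}[g^{\V}]_{ij}$ reduces to $x^{-2}\Gamma^{11k} + \Gamma^{33k}$; one checks $\Gamma^{33k} \equiv 0$ and that $\Gamma^{11k}$ survives only for $k=1$, so the whole first-order part of $\LV$ collapses to a single multiple of $\partial_x$. Feeding this and the second-order symbol $\beta^{ij}\partial_i\partial_j$ into \eqref{eqn.LVlocal} with $m=2$ produces an explicit operator, and I would close by expanding $\tfrac{1}{2}(X^2+Y^2)$ in coordinates and matching coefficients; together with the already-established identity $\diverg^{\mu_R}\gradH = X^2+Y^2$ this gives the proposition.

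For the second proof I would instead test the sufficient conditions \eqref{eqn.LV=divgradH2} of Theorem \ref{thm.LV=divgradH} against the density $\tau = x^{-1}$, i.e. $\w = \mu_R$. The first condition asks that $-\tfrac{1}{2}\sum_{i,j,l}[g^{\V}]_{ij}\,\partial\beta^{ij}/\partial x^l$ equal $\tau^{-1}\sum_l \partial\tau/\partial x^l$, and the second that $\sum_{j,l}{\mathscr{P}^l}_j\,\partial\beta^{jk}/\partial x^l$ equal $\sum_l \partial\beta^{lk}/\partial x^l$; with the data above each side of the first reduces to $-x^{-1}$ and each side of the second reduces to $2x$ for $k=1$ and to $0$ otherwise. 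Since both identities hold, Theorem \ref{thm.LV=divgradH} yields $\LV = \tfrac{1}{2}\diverg^{\mu_R}\gradH$.

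The linear algebra is routine; the one genuinely noteworthy point, and the step a careless computation is most likely to botch, is that the conditions close up against the \emph{right} Haar density $\tau = x^{-1}$ rather than the left. Indeed the Riemannian volume of this compatible metric is $x^{-2}\,dx\wedge dy\wedge dz = \mu_L$, so $\LV$ pairs with the opposite-handed measure from the one the extended metric naively induces. Keeping the left/right bookkeeping straight is therefore the main thing to watch, and this is precisely the ``switch of handedness'' flagged in the discussion following Theorem \ref{thm.LV=divgradH}.
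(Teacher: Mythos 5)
Your proposal is correct and your second argument is exactly the paper's proof: the paper verifies the sufficient conditions \eqref{eqn.LV=divgradH2} with $\tau = x^{-1}$ using the same matrices $B$, $G$, $[g^{\V}]=GBG$ and $P=BG$, and explicitly omits the direct computation via \eqref{eqn.LVlocal} that you sketch as your first argument. One small point in your favor: your value $[g^{\V}]_{33}=+1$ is the correct one (as positive semi-definiteness of $g^{\V}$ requires), whereas the paper's displayed matrix has a sign typo reading $-1$; this does not affect either verification since $\beta^{33}$ is constant.
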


\begin{proof} We omit the derivation of $\LV$ using \eqref{eqn.LVlocal} as we had in the previous two examples, and present the simplest confirmation of the result using Theorem \ref{thm.LV=divgradH}.
As before, extend the sub-Riemannian metric  to the Riemannian metric $g$ such that $\{X, Y, Z\}$ is an orthonormal frame. Letting $B$ and $G$ be the matrices representing $\beta$ and $g$ in standard coordinates respectively,
\begin{equation} \label{eqn.affineB}
B = \begin{pmatrix}
x^2 & 0 & 0 \\
0 & x^2 & x \\
0 & x & 1
\end{pmatrix}
~~\text{ and }~~
G = \begin{pmatrix}
x^{-2} & 0 & 0 \\
0 & x^{-2} & -x^{-1} \\
0 & -x^{-1} & 2
\end{pmatrix}
\end{equation}
Using Proposition \ref{prop.distinguishedV} and Corollary \ref{cor.calcLV},
\[
[g^{\V}] = GBG = \begin{pmatrix}
x^{-2} & 0 & 0 \\
0 & 0 & 0 \\
0 & 0 & -1
\end{pmatrix}.
\]
In local coordinates, the matrix representing the projection $\mathscr{P}= \beta \circ g$ onto $\mathcal{H}$ along $\mathcal{V}$ is
\[
P = BG = \begin{pmatrix}
1 & 0 & 0 \\
0 & 0 & x \\
0 & 0  & 1
\end{pmatrix}
\]
from which we find
\[
\sum\limits_{i, l =1}^3 {P^{l}}_j \frac{\partial B^{jk}}{\partial x^l} \delta_{1k} =  \frac{\partial B^{11}}{\partial x} \delta_{1k} = \sum_{l =1}^3 \frac{\partial B^{lk}}{\partial x^l}
\]
and
\[
-\frac{1}{2} \sum\limits_{i,j, l=1}^3 [g^{\V}]_{ij} \frac{\partial B^{ij}}{\partial x^l} =- \frac{1}{2} x^{-2} (2x)  = -\frac{1}{x} =  \frac{1}{\tau} \frac{\partial \tau}{\partial x} =   \sum\limits_{l=1}^3 \frac{1}{\tau} \frac{\partial \tau}{\partial x^l}
\]
when $\tau = x^{-1}$. Since we have shown \eqref{eqn.LV=divgradH2} is satisfied, we conclude that $\LV = \frac{1}{2} \diverg^{\mu_R} \gradH$ as $\mu_R = \tau \,dx\wedge dy \wedge dz$.
\end{proof}
\begin{rem}
The metric $g$ we used in the proof of Proposition \ref{prop.LVaffine} gives rise to the Riemannian volume
\[
\sqrt{|g|} \, dx\wedge dy \wedge dz = x^{-2} \, dx \wedge dy \wedge dz = \mu_L.
\]
This is interesting since $\LV$ gives the divergence of the horizontal gradient against the right Haar measure even though the Riemannian volume of the extended metric $g$ gives rise to the left Haar measure.
\end{rem}

\section{Appendix A: Derivation of \eqref{eqn.divLgradH}} \label{s.calcs}

\newcommand{\hux}[1]{\hat{\underline \chi}^{#1}}
\newcommand{\huX}[1]{\hat{\underline X}_{#1}}
\newcommand{\huuX}[1]{\hat{\underline{\underline X}}_{#1}}

We let $G$ be a Lie group which is also a sub-Riemannian manifold with horizontal distribution $\mathcal{H}$ admitting a global orthonormal frame of left invariant vector fields $\{X_1, ..., X_m\}$. Extend this to global  frame of left-invariant vector fields on $TM$,  $\{X_1, ..., X_m, X_{m+1}, ..., X_d\}$. Let $\{\chi^1, ..., \chi^d\}$ be the dual frame, in which case
\[
\mu = \chi^1 \wedge \cdots \wedge \chi^d
\]
is a left-invariant volume on $G$, and hence a scalar multiple of left Haar measure. It therefore suffices to show that $\diverg^{\mu} \gradH$ agrees with \eqref{eqn.divLgradH}.

 For some $1 \leq i \leq d$, by $\hux{i}$ we mean the $d-1$ form
\[
\hux{i} := \chi^1 \wedge \cdots \wedge \chi^{i-1} \wedge \chi^{i+1} \wedge \cdots \wedge \chi^d.
\]
Similarly, by $\huX{i}$ we mean the $d-1$ tuple
\[
\huX{i} := (X_1, ..., X_{i-1}, X_{i+1}, ..., X_d).
\]
Let's note that $\hux{i}(\huX{j}) = {\delta^{i}}_j$. Finally, for $1 \leq i < j \leq d$, we let $\huuX{i,j}$ be the $d-2$ tuple
\[
\huuX{i,j} := (X_1, ..., X_{i-1}, X_{i+1}, ..., X_{j-1}, X_{j+1}, ..., X_d).
\]
\begin{claim}
$\displaystyle d\hux{i} = (-1)^i \Big[ \sum\limits_{k=1}^d \chi^k([X_i,X_k]) \Big] \mu.$
\end{claim}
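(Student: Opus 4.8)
The plan is to evaluate both sides on the left-invariant frame $(X_1,\ldots,X_d)$. Since $d\hux{i}$ is a $d$-form on the $d$-dimensional manifold $G$ and $\mu(X_1,\ldots,X_d)=1$, it suffices to show that $d\hux{i}(X_1,\ldots,X_d)=(-1)^i\sum_{k=1}^d\chi^k([X_i,X_k])$. (It is worth noting in passing that this coefficient is exactly $\operatorname{Tr}(\operatorname{ad}X_i(e))$, which is why the claim is the engine behind Theorem \ref{thm.divLgradH}.) To compute the left-hand side I would invoke the intrinsic formula for the exterior derivative of a $(d-1)$-form $\eta$:
\begin{align*}
d\eta(Y_1,\ldots,Y_d) &= \sum_{l=1}^d (-1)^{l+1}Y_l\big(\eta(Y_1,\ldots,\widehat{Y_l},\ldots,Y_d)\big) \\
&\quad + \sum_{l<m}(-1)^{l+m}\eta\big([Y_l,Y_m],Y_1,\ldots,\widehat{Y_l},\ldots,\widehat{Y_m},\ldots,Y_d\big),
\end{align*}
applied with $\eta=\hux{i}$ and $Y_l=X_l$.

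The first step is to dispose of the derivative terms. Because $\{\chi^k\}$ is dual to $\{X_k\}$, the form $\hux{i}$ evaluated on any $(d-1)$-subtuple $(X_1,\ldots,\widehat{X_l},\ldots,X_d)$ is a determinant of Kronecker deltas: it vanishes unless $l=i$, in which case it equals the constant $\hux{i}(\huX{i})=1$. Hence each $X_l$ applied to it is zero and the entire first sum drops out. The remaining work is the bracket sum. Expanding $[X_l,X_m]=\sum_k\chi^k([X_l,X_m])X_k$ reduces each summand to a linear combination of $\hux{i}(X_k,\huuX{l,m})$; such a term is nonzero only when the indices of its $d-1$ arguments form a permutation of $\{1,\ldots,d\}\setminus\{i\}$ with no repetition. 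This forces $k\in\{l,m\}$, and then either $m=i$ (paired with $k=l$) or $l=i$ (paired with $k=m$), since $l<m$ rules out both at once.

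The main obstacle is the sign bookkeeping in these two surviving families. Permuting the stray argument $X_k$ back into increasing order produces a factor $(-1)^{l-1}$ in the case $m=i$ and $(-1)^{m}$ in the case $l=i$; I would combine these with the prefactor $(-1)^{l+m}$ and with the antisymmetry $\chi^k([X_l,X_m])=-\chi^k([X_m,X_l])$, and the happy outcome is that both cases collapse to the same uniform factor $(-1)^i$. Summing the $m=i$ contributions over $l<i$ and the $l=i$ contributions over $m>i$, and observing that the $k=i$ term would vanish since $[X_i,X_i]=0$, reassembles exactly $(-1)^i\sum_{k=1}^d\chi^k([X_i,X_k])$, proving the claim. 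As a cross-check one could instead start from the Maurer--Cartan structure equations $d\chi^k=-\sum_{a<b}\chi^k([X_a,X_b])\,\chi^a\wedge\chi^b$ and apply the graded Leibniz rule to $\hux{i}$; the only wedge products that survive are those in which $d\chi^j$ reinserts the single missing index $i$, which leads back to the identical sign computation.
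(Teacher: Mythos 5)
Your proof is correct and follows essentially the same route as the paper: evaluate $d\hux{i}$ on the frame via the intrinsic formula for the exterior derivative, note that the derivative terms vanish because $\hux{i}(\huX{l})=\delta^i_l$ is constant, and reduce the bracket sum to the two surviving families $l=i$ and $m=i$ with the same sign bookkeeping (your factors $(-1)^{l-1}$ and $(-1)^{m}$ match the paper's $(-1)^{k-1}$ and $(-1)^{j-2}$). The Maurer--Cartan cross-check is a nice aside but the core argument is identical.
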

\begin{proof}
Note that $d\hux{i} = f\mu$ for some smooth function $f$ by a dimensionality argument. Therefore
\begin{align*}
f = d\hux{i}(X_1, ..., X_d) = \sum_{k=1}^d (-1)^{k-1} X_k \big( \hux{i}(\huX{k}) \big) + \sum\limits_{1 \leq k < j \leq d} (-1)^{k+j} \hux{i}([X_k,X_j], \huuX{k,j}).
\end{align*}

The first sum on the right hand side is $0$ as $\hux{i}(\huX{k}) = {\delta^i}_k$. Turning our focus onto the term, we realize that the only possible non-zero outcome will occur when either $k$ or $j$ is equal to $i$, since otherwise, $X_i$ will be one of the vector fields within the argument of $\hux{i}$, forcing a null result.

If $k=i$: Write $[X_i, X_j] = \sum\limits_m \chi^m\big([X_i,X_j]\big) X_m$. We have
\[
\hux{i}\big([X_i,X_j], \huuX{i,j}\big) = \sum_m \chi^m\big([X_i,X_j]\big)\, \hux{i}\big( X_m, \huuX{i,j}\big) = = \chi^j\big([X_i,X_j]\big) \, \hux{i} \big(X_j, \huuX{i,j} \big)
\]
where we used that if $m \neq j$, then we will have a repeated vector field in the argument of $\hux{i}$, again resulting in $0$. From here, we have
\[
\begin{aligned}
\big(X_j, \huuX{i,j} \big) = (-1)^{j-2} (X_1, ..., X_{i-1}, X_{i+1}, ..., X_{j-1}, X_j, X_{j+1}, ..., X_d) = \huX{i}
\end{aligned}
\]
Hence
$
\hux{i}( X_j, \huuX{i,j}) = (-1)^{j-2} \, \hux{i}(\huX{i}) = (-1)^j.
$
This finally results in
\[
\hux{i}\big([X_i,X_j], \huuX{i,j}\big) = (-1)^j \, \chi^j\big([X_i,X_j]\big).
\]

If $j=i$. Write $[X_k, X_i] = \sum\limits_m \chi^m\big([X_k,X_i]\big) X_m$ and working by the same argument in the previous case, we find
\[ \begin{aligned}
\hux{i}\big([X_k,X_i], \huuX{k,i}\big) = \sum_m \chi^m\big([X_k,X_i]\big)\, \hux{i}\big( X_m, \huuX{k,i}\big)
\\ = \chi^k\big([X_k,X_i]\big) \, \hux{i} \big(X_k, \huuX{k,i} \big)
\end{aligned} \]
and $(X_k, \huuX{k,i} \big) = (-1)^{k-1} \huX{i}$, implying $\hux{i}( X_k, \huuX{k,i})= (-1)^{k-1} \, \hux{i}(\huX{i}) = (-1)^{k-1}.$
This finally results in
\[
\hux{i}\big([X_k,X_i], \huuX{k,i}\big) = (-1)^{k-1}\, \chi^k\big([X_k, X_i]\big) = (-1)^k \,\chi^k\big([X_i,X_k]\big)
\]
Therefore
\begin{align*}
f &= \sum\limits_{1 \leq k < j \leq d} (-1)^{k+j} \hux{i}([X_k,X_j], \huuX{k,j}) \\
&= \sum\limits_{i < j \leq d} (-1)^{i+j} (-1)^j \, \chi^j\big([X_i,X_j]\big) + \sum\limits_{1 \leq k < i} (-1)^{k+j} (-1)^{k} \, \chi^k\big([X_i,X_k]\big) \\
&= (-1)^i \sum\limits_{k=1}^d  \chi^k\big([X_i,X_k]\big)
\end{align*}
which finishes the proof of the claim.
\end{proof}

\begin{proof}[Proof of \eqref{eqn.divLgradH}]
For any horizontal vector field $X = \sum\limits_{i=1}^m a^i X_i$, we have
\begin{align*}
\diverg^{\mu}(X)\mu = d \circ \iota_X(\mu) = \sum\limits_{i=1}^d (-1)^{i+1} \Big[ d \chi^i(X) \wedge \hux{i} + \chi^i(X) d\hux{i} \Big] \\
=\sum\limits_{i=1}^m (-1)^{i+1} da^i \wedge \hux{i} - \sum_{i=1}^m a^i \sum\limits_{k=1}^d \chi^k\big([X_i,X_k]\big) \mu
\end{align*}
where the second equality was established in the preceding claim. Note that $da^i = \sum\limits_{k=1}^d X_k(a^i) \chi^k$ and $\chi^k \wedge \hux{i} = (-1)^{i+1}  \delta_{ik}\, \mu$. We then deduce
\[
\diverg^{\mu}(X) =  \sum\limits_{i=1}^m \Big[ X_i(a^i) - \sum_{k=1}^d \chi^k\big([X_i,X_k]\big) a^i \Big]
\]
Replacing $a^i$ with $X_i(f)$, then $X = \gradH f$ and
\[
\diverg^{\mu}\gradH f  = \sum\limits_{i=1}^m \Big[ X_i^2  - \sum_{k=1}^d \chi^k\big([X_i,X_k]\big) X_i\Big] \,f
\]
We are done once we notice that $\sum\limits_{k=1}^d \chi^k\big([X_i,X_k]\big) = \operatorname{Tr}( \operatorname{ad} X_i(e))$ is defined on the Lie group independent of choice of extension of our orthonormal horizontal frame.
\end{proof}

\section{Appendix B: Linear Algebraic Preliminaries} \label{s.linear}
For this section, let $T$ be a finite dimensional vector space of dimension $d$. As is common, we let $T^{\ast}$ denote the dual space of $T$. Further, let $H \subset T$ be a subspace of dimension $m$.
\subsection{Inner products and the isomorphisms between $T$ and $T^*$ }\label{ss.TT*iso} An inner product $\langle \cdot, \cdot \rangle$ on $T$ induces a symmetric, positive definite isomorphism $g : T  \to T^*$ defined by $g(X) = \langle \cdot, X \rangle$.  The inverse map $\beta = g^{-1} : T^* \to T$ is the  symmetric, positive definite isomorphism defined by $\langle \beta(p), X \rangle = p(X)$ for every $X \in T$ and $p \in T^*$. In fact, you will recognize that $\beta$ is the isomorphism defined via the Hilbert space version of Riesz representation where $\beta(p) = Y \in T$ if and only if $p(X) = \langle X, Y \rangle$ for every $X \in T$.

Had we started with a symmetric, positive definite isomorphism $\beta : T^* \to T$, we can then recover an inner-product $\langle \cdot, \cdot \rangle$ on $T$ by $\langle X, Y \rangle = \eta \big( \beta(p) \big)$ where $\beta(\eta) = X$ and $\beta(p) = Y$. Note that the symmetry of $\beta$ is the statement that $\eta\big( \beta(p)\big) = p \big( \beta(\eta) \big)$ for every $p, \eta \in T^*$, and that positive definiteness of $\beta$ means $p\big(\beta (p)\big) > 0$ whenever $0 \neq p \in T^*$; from this and the linearity of $\beta$, it follows nearly immediately that $\langle \cdot, \cdot \rangle$ is an inner product. To summarize these well-known relations,

\begin{prop}
\label{prop.posdefisos}
There are canonical bijections between the following spaces.
\begin{enumerate}
\item[$\operatorname{IP}$:]  Inner products on $T$.
\item[$\operatorname{B}$:] Symmetric, positive definite isomorphisms $\beta : T^* \to T$.
\item[$\operatorname{G}$:] Symmetric, positive definite isomorphisms $g : T \to T^*$.
\end{enumerate}
The bijection between $\operatorname{IP}$ and $\operatorname{B}$ is defined by the equality $\langle \beta(p), X \rangle = p(X)$ for every $p \in T^*$ and $X \in T$; the bijection between $\operatorname{IP}$ and $\operatorname{G}$ is defined by the equality $\langle \cdot, X \rangle = g(X)$ for every $X \in T$; and the bijection between  $\operatorname{B}$ and $\operatorname{G}$ is defined by the equality $\beta = g^{-1}$.
\end{prop}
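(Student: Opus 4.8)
The plan is to establish the three correspondences by exhibiting explicit, mutually inverse constructions, and then to check that the triangle of bijections commutes so that the three defining relations are consistent with one another. Everything here is definitional linear algebra, so the real content is bookkeeping: transferring the symmetry and positive-definiteness conditions correctly and keeping the slot conventions straight throughout.

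First I would treat $\operatorname{IP} \leftrightarrow \operatorname{G}$. Given an inner product, set $g(X) := \langle \cdot, X\rangle$, which is linear in $X$ and hence a map $T \to T^*$. Nondegeneracy of the inner product forces $\operatorname{Null}(g) = 0$, so $g$ is an isomorphism by equality of dimensions; the identity $g(X)(Y) = \langle Y, X\rangle = \langle X, Y\rangle = g(Y)(X)$ is symmetry of $g$, and $g(X)(X) = \langle X, X\rangle > 0$ for $X \neq 0$ is positive-definiteness. Conversely, from a symmetric positive-definite $g$ I would define $\langle X, Y\rangle := g(Y)(X)$; bilinearity is automatic, symmetry of $g$ yields symmetry of the form, and positive-definiteness of $g$ yields that of the form. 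These two assignments are visibly inverse to one another.

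Next, for $\operatorname{B} \leftrightarrow \operatorname{G}$: a symmetric positive-definite $g$ is in particular invertible, so $\beta := g^{-1} : T^* \to T$ is an isomorphism. To verify that $\beta$ is symmetric in the sense $\eta(\beta(p)) = p(\beta(\eta))$, I substitute $X = \beta(\eta)$ and $Y = \beta(p)$ into $g(X)(Y) = g(Y)(X)$, which reads exactly $\eta(\beta(p)) = p(\beta(\eta))$; positive-definiteness follows from $p(\beta(p)) = g(\beta(p))(\beta(p)) > 0$ for $p \neq 0$. Running the same substitution in reverse shows $g = \beta^{-1}$ is symmetric positive-definite whenever $\beta$ is, so $g \mapsto g^{-1}$ is the desired bijection.

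Finally I would confirm commutativity of the triangle, which both identifies the $\operatorname{IP} \leftrightarrow \operatorname{B}$ bijection as the composite $\operatorname{IP} \to \operatorname{G} \to \operatorname{B}$ and verifies the stated relation $\langle \beta(p), X\rangle = p(X)$. Starting from an inner product, form $g$ and then $\beta = g^{-1}$; writing $Y = \beta(p)$ so that $g(Y) = p$, symmetry of the inner product gives $\langle \beta(p), X\rangle = \langle Y, X\rangle = \langle X, Y\rangle = g(Y)(X) = p(X)$, as required. The only real obstacle is the risk of a transposition error: the unusual symmetry condition $\eta(\beta(p)) = p(\beta(\eta))$ for maps $T^* \to T$ and the slotting in $\langle X, Y\rangle = g(Y)(X)$ must be applied consistently, since a single misplaced argument would break the commutativity that makes all three correspondences agree.
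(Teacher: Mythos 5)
Your proposal is correct and follows essentially the same route as the paper, which establishes the correspondences in its preceding discussion by setting $g(X) = \langle \cdot, X\rangle$, taking $\beta = g^{-1}$, and recovering the inner product from $\beta$ via $\langle X, Y\rangle = \eta(\beta(p))$ with $\beta(\eta) = X$, $\beta(p) = Y$. Your explicit verification that the triangle of bijections commutes is a slightly more careful bookkeeping of what the paper treats as "well-known relations," but the substance is identical.
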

Note that if $\{X_1, ..., X_d\}$ is a basis of $T$ which is orthonormal with respect to the inner product $\langle \cdot, \cdot \rangle$, and if $\{ p^1, ..., p^d\}$ is its dual basis, then the corresponding map $\beta$ can be defined by $\beta(p^i) = X_i$ for each $1 \leq i \leq d$. This is readily confirmed by realizing that $\langle \beta(p^i), X_j \rangle = p^i(X_j) = \delta_{ij} = \langle X_i, X_j \rangle$, implying that $\langle \beta(p^i) - X_i, \cdot \rangle \equiv 0$.

\subsection{Indefinite Inner Products and $\beta$} We move now to the setting where, instead of an inner product being defined on all of $T$, an inner product is defined only on a subspace $H \subset T$. We will work to recover what we can from Proposition \ref{prop.posdefisos} in this setting; however, there is no canonical choice of symmetric linear map $g : T \to T^*$ such that $g(X) = \langle \cdot, X \rangle$ for every $X \in H$. Indeed, there is no a priori canonical choice of dual vector we should assign to $g(X)$ as any viable choice need only agree on their application to vectors $Y \in H$; it could very well be the case that $p(Y) = \eta(Y) = \langle Y, X \rangle$ for every $Y \in H$, but $p \neq \eta$. What we can recover from Proposition \ref{prop.posdefisos} is summarized here.

\begin{prop}
Given any inner product $\langle \cdot, \cdot \rangle$ defined on $H$, there exists a unique symmetric, positive semi-definite homomorphism $\beta : T^* \to T$ such that $\beta(T^*) = H$ and $\langle \beta(p), X \rangle = p(X)$ for every $p \in T^*$ and every $X \in H$. Conversely, given any symmetric, positive semi-definite homomorphism $\beta : T^* \to T$ with $\beta(T^*) = H$, there is an inner product $\langle \cdot, \cdot \rangle$ defined uniquely on $H$ by the equality $\langle \beta(p), X \rangle = p(X)$ for every $p \in T^*$ and every $X \in H$.

In other words, there is a canonical bijection between the following spaces.
\begin{enumerate}
\item[$\operatorname{IPH}$:]  Inner products on $H$.
\item[$\operatorname{BH}$:] Symmetric, positive semi-definite linear maps $\beta : T^* \to T$ with image $H$.
\end{enumerate}
\end{prop}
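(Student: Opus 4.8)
The plan is to construct both maps of the asserted bijection explicitly and then observe that they are mutually inverse; the only genuinely new feature beyond the nondegenerate case of Proposition \ref{prop.posdefisos} is the possible degeneracy (non-injectivity) of $\beta$, which I will address head on.

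For the forward direction, given an inner product $\langle \cdot, \cdot \rangle$ on $H$, I would define $\beta$ fiberwise by the Riesz representation theorem applied to the finite-dimensional inner product space $H$: for each $p \in T^*$, the restriction $p|_H$ is a linear functional on $H$, so there is a unique $\beta(p) \in H$ with $\langle \beta(p), X \rangle = p(X)$ for all $X \in H$. Uniqueness in Riesz representation makes $p \mapsto \beta(p)$ linear and simultaneously establishes uniqueness of $\beta$. I would then verify the three required properties: the image is all of $H$ (each $v \in H$ equals $\beta(p)$ for any $p \in T^*$ extending the functional $\langle \cdot, v \rangle$ from $H$ to $T$); symmetry, by evaluating the defining identity at $X = \beta(\eta) \in H$ to get $\eta(\beta(p)) = \langle \beta(p), \beta(\eta)\rangle = p(\beta(\eta))$; and positive semi-definiteness, from $p(\beta(p)) = \langle \beta(p), \beta(p)\rangle \geqslant 0$.

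For the converse direction, given a symmetric positive semi-definite $\beta$ with $\beta(T^*) = H$, I would define $\langle v, X\rangle := p(X)$ whenever $v = \beta(p)$ and $X \in H$. The first point to settle is well-definedness, since $\beta$ need not be injective: if $\beta(p) = \beta(p')$ and $X = \beta(\eta) \in H$, then symmetry of $\beta$ gives $p(X) = p(\beta(\eta)) = \eta(\beta(p)) = \eta(\beta(p')) = p'(X)$, so the value does not depend on the chosen representative $p$. Bilinearity is immediate, and symmetry of the form follows from symmetry of $\beta$, namely $\langle \beta(p), \beta(\eta)\rangle = p(\beta(\eta)) = \eta(\beta(p)) = \langle \beta(\eta), \beta(p)\rangle$.

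The main obstacle is proving positive \emph{definiteness} of this recovered form on $H$: positive semi-definiteness of $\beta$ only yields $\langle v, v\rangle = p(\beta(p)) \geqslant 0$, and I must rule out a nonzero $v \in H$ with $\langle v, v\rangle = 0$. Here I would introduce the symmetric bilinear form $Q(p, \eta) := p(\beta(\eta))$ on $T^*$, which is positive semi-definite by hypothesis, and apply the Cauchy--Schwarz inequality $Q(p,\eta)^2 \leqslant Q(p,p)\,Q(\eta,\eta)$ valid for such forms. If $v = \beta(p)$ satisfies $\langle v, v\rangle = Q(p,p) = 0$, then $Q(p,\eta) = \eta(\beta(p)) = 0$ for every $\eta \in T^*$, which forces $\beta(p) = 0$, i.e.\ $v = 0$. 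Uniqueness of the inner product is then clear, since its values are prescribed by the defining equation. Finally, because both constructions are pinned down by the single identity $\langle \beta(p), X\rangle = p(X)$, comparing them shows they are mutual inverses, completing the bijection.
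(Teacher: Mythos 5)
Your proof is correct, and it diverges from the paper's argument in two useful places. For the direction ``inner product $\Rightarrow \beta$,'' the paper picks an orthonormal basis of $H$, extends it to a basis of $T$, defines $\beta$ on the dual basis, and then must verify that the result does not depend on the chosen extension; your fiberwise Riesz-representation definition $\beta(p) := $ (the unique $v \in H$ representing $p|_H$) is basis-free, so well-definedness, linearity, and uniqueness of $\beta$ come for free, at the cost of a short separate check of surjectivity, symmetry, and positive semi-definiteness --- all of which you carry out correctly. For the direction ``$\beta \Rightarrow$ inner product,'' your well-definedness argument via the symmetry identity $p(\beta(\eta)) = \eta(\beta(p))$ is essentially the paper's. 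The more substantive difference is that the paper dismisses positive \emph{definiteness} of the recovered form with ``can be readily checked,'' whereas you supply the actual argument: Cauchy--Schwarz for the positive semi-definite form $Q(p,\eta) = p(\beta(\eta))$ shows that $Q(p,p)=0$ forces $\eta(\beta(p))=0$ for all $\eta$, hence $\beta(p)=0$. This is exactly the point where semi-definiteness of $\beta$ could conceivably fail to produce a genuine inner product on $H$, so making it explicit is a real improvement in completeness over the paper's outline.
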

\begin{proof}[Proof Outline]
If $\beta \in \operatorname{BH}$, define $\langle \cdot, \cdot \rangle \in \operatorname{IPH}$  by $\langle X, Y \rangle = \eta\big( \beta(p) \big)$ where $\beta(\eta) = X$ and $\beta(p) = Y$. We must confirm that this is well defined, as the choice for $\eta$ and $\phi$ are not unique. Assume that $\beta(\eta) = \beta(\tilde{\eta})$, then using the symmetry of $\beta$, $\eta\big( \beta(p)\big)  = p \big( \beta(\eta) \big) = p \big( \beta(\tilde{\eta}) \big) = \tilde{\eta} \big( \beta(p) \big)$. Hence, if also $\beta(p) = \beta(\tilde{p})$, then
\[
\eta \big( \beta(p)\big) = \tilde{\eta}\big( \beta(p) \big) = p \big( \beta(\tilde{\eta}) \big) = \tilde{p}\big( \beta(\tilde{\eta}) \big) = \tilde{\eta} \big( \beta(\tilde{p}) \big)
\]
from which we conclude that $\langle \cdot, \cdot \rangle$ is well defined. The remaining pieces to confirm that $\langle \cdot, \cdot \rangle$ is an inner product (on $H$) can be readily checked.

Conversely, if $\langle \cdot, \cdot \rangle \in  \operatorname{IPH}$, let $\mathcal{X} := \{X_1, X_2, ..., X_m\}$ be an basis of $H$ which is orthonormal with respect to $\langle \cdot, \cdot \rangle$; extend this to a basis of $T$, say $\{X_1, X_2, .., X_m, Y_{m+1}, ..., Y_d\},$ and let $\{p^1, p^2, ..., p^m, \eta^{m+1}, ..., \eta^{d}\} \subset T^*$ be the corresponding dual basis. For $p = \sum\limits_{i=1}^m a_i p^i + \sum\limits_{j=m+1}^d b_j \eta^j$, define $\beta(p) := \sum\limits_{i=1}^m a_i X_i \in H$. If we can show that this choice of $\beta$ is well defined, it is then a simple matter to confirm that $\langle \beta(p), X \rangle = p(X)$ for every $p \in T^*$ and $X \in H$,  $\eta \big( \beta(p)\big) = p \big(\beta(\eta)\big)$, and $p\big( \beta(p) \big) \geqslant 0$ for every $p \in T^*$. To ensure that $\beta$ is well defined, suppose that we extend $\mathcal{X}$ to a basis for $T$ as $\{X_1, ..., X_m, \tilde{Y}_{m+1}, ..., \tilde{Y}_{d}\}$ resulting in a corresponding dual basis $\{\tilde{p}_1, ..., \tilde{p}_m, \tilde{\eta}^{m+1}, ..., \tilde{\eta}^d\}$. We need to show that $\beta(p^i) = \beta(\tilde{p}^i)$ for every $i$. It suffices then to show that if $p = \sum a_i p^i + \sum b_j \eta^{j}$ then $p = \sum a_i \tilde{p}^i + \sum c_j \tilde{\eta}^j$; however, this is obvious upon considering $p(X_i)$ for each $X_i \in \mathcal{X}$.
\end{proof}

We conclude this section with one final result that is linear algebraic in nature, but from which the geometric version  Proposition \ref{prop.distinguishedV} follows immediately.
\begin{prop} \label{prop.distinguishedVlinear}
Let $V \subset T$ be a subspace such that $T = H \oplus V$. There exists a unique symmetric, positive semi-definite linear map $g^V : T \to T^*$ such that $\beta \circ g^V(X) = X$ for every $X \in H$, and $g^V(Y) = 0$ for every $Y \in V$. Moreover, if $( \cdot, \cdot)$ is any inner product extending $\langle \cdot, \cdot \rangle$ such that $V = H^{\perp}$, then $g^V(X) = g(X)$ for every $X \in H$, where $g : T \to T^*$ is the isomorphism induced by $( \cdot, \cdot)$. Further, $g^V = g \circ \beta \circ g$.
\end{prop}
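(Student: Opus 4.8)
The plan is to pin down $g^V$ by identifying, for each $X \in H$, the unique covector that $g^V$ must send $X$ to. The key structural observation is that $\Null(\beta)$ coincides with the annihilator $H^{\circ} := \{p \in T^{\ast} : p(X) = 0 \text{ for all } X \in H\}$: indeed, $\beta(p) = 0$ forces $p(X) = \langle \beta(p), X\rangle = 0$ on $H$, and conversely if $p|_H = 0$ then $\langle \beta(p), X\rangle = 0$ for all $X \in H$, so $\beta(p) = 0$ since $\beta(p) \in H$ and $\langle \cdot, \cdot\rangle$ is nondegenerate on $H$. Because $T = H \oplus V$, the standard fact $(H+V)^{\circ} = H^{\circ} \cap V^{\circ}$ gives $H^{\circ} \cap V^{\circ} = T^{\circ} = \{0\}$; since $\dim V^{\circ} = m = \dim H$ while $\beta(T^{\ast}) = H$, the restriction $\beta|_{V^{\circ}} : V^{\circ} \to H$ is a linear isomorphism. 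This isomorphism is the engine of the whole argument.

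For uniqueness I would suppose $g^V$ satisfies the stated properties. Since $g^V$ vanishes on $V$ and is symmetric, for $X \in H$ and $Y \in V$ one has $(g^V(X))(Y) = (g^V(Y))(X) = 0$, so $g^V(X) \in V^{\circ}$; combined with $\beta(g^V(X)) = X$, the isomorphism $\beta|_{V^{\circ}}$ forces $g^V(X) = (\beta|_{V^{\circ}})^{-1}(X)$. Together with $g^V|_V = 0$ this determines $g^V$ completely. For existence I would then simply \emph{define} $g^V$ by $g^V|_V = 0$ and $g^V(X) = (\beta|_{V^{\circ}})^{-1}(X)$ for $X \in H$, extended linearly via $T = H \oplus V$. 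The first required identity holds by construction and the second by definition; symmetry and positive semi-definiteness both follow from one computation. Writing $u = X_1 + Y_1$ and $w = X_2 + Y_2$ with $X_i \in H$, $Y_i \in V$, we have $g^V(u) = g^V(X_1) \in V^{\circ}$, so $(g^V(u))(w) = (g^V(X_1))(X_2) = \langle \beta(g^V(X_1)), X_2\rangle = \langle X_1, X_2\rangle$ using the defining relation of $\beta$. This is symmetric in $u$ and $w$, and yields $(g^V(u))(u) = \langle X_1, X_1 \rangle \geqslant 0$.

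For the compatibility statement and the closed form, let $(\cdot,\cdot)$ extend $\langle\cdot,\cdot\rangle$ with $V = H^{\perp}$ and let $g$ be the induced isomorphism. For $X \in H$ the covector $g(X)$ annihilates $V = H^{\perp}$, so $g(X) \in V^{\circ}$, and for every $Z \in H$ we get $\langle \beta(g(X)), Z\rangle = g(X)(Z) = (Z, X) = \langle Z, X\rangle$, whence $\beta(g(X)) = X$ by nondegeneracy on $H$; thus $g(X)$ meets both defining conditions and uniqueness yields $g^V(X) = g(X)$. The identity $g^V = g \circ \beta \circ g$ then follows by checking on the two summands: on $H$, $g\beta g(X) = g(\beta(g(X))) = g(X) = g^V(X)$, while on $V$, $g(Y)$ satisfies $\langle \beta(g(Y)), Z\rangle = (Z, Y) = 0$ for all $Z \in H$, so $\beta(g(Y)) = 0$ and $g\beta g(Y) = 0 = g^V(Y)$.

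I expect the main obstacle to be conceptual rather than computational: recognizing that the correct target for $g^V(X)$ is the annihilator $V^{\circ}$, and that the transversality $T = H \oplus V$ is precisely what makes $\beta|_{V^{\circ}}$ invertible. Once $\Null(\beta) = H^{\circ}$ and $H^{\circ}\cap V^{\circ} = \{0\}$ are in place, every verification reduces to a one-line application of the relation $\langle\beta(p),X\rangle = p(X)$ together with the symmetry and positive definiteness of $\langle\cdot,\cdot\rangle$ on $H$.
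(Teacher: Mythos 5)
Your proof is correct and follows essentially the same route as the paper's: your isomorphism $\beta|_{V^{\circ}} : V^{\circ} \to H$ is precisely the paper's restriction of $\beta$ to the span of the first $m$ dual covectors of a basis adapted to $H \oplus V$ (that span \emph{is} the annihilator $V^{\circ}$, so your formulation just makes the paper's Construction Method 1 coordinate-free and renders its well-definedness check automatic), and your treatment of the ``moreover'' clauses matches the paper's second construction via $g \circ \beta \circ g$. The one point where you go beyond the paper is uniqueness, which the paper merely asserts and you actually derive by using the symmetry of $g^{V}$ to force $g^{V}(H) \subset V^{\circ}$.
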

\begin{proof}[Proof Outline] \emph{Construction Method 1}:
Let $\{X_1, ..., X_m, Y_{m+1}, ..., Y_d\}$ be a basis for $T$ such that $\spn\{ X_1, ..., X_m\} = H$ and $\spn\{Y_{m+1}, ..., Y_d\} = V.$ From here let $\{ p^1, ..., p^m, \eta^{m+1}, ..., \eta^d \}\subset T^*$ be the dual basis. Then $\spn\{\eta^{m+1}, ..., \eta^{d}\} = \Null(\beta)$ and the restriction of the map $\beta : \spn\{p^1, ..., p^d\} \to H$ is an isomorphism. We denote by $(\beta_V)^{-1} : H \to \spn\{p^1, ..., p^d\}$ the inverse of this isomorphism. Define $g^V = (\beta_V)^{-1} \oplus {\bf 0} : H \oplus V \to T^*$. Note that if $\{\tilde{X}_1, ..., \tilde{X}_m, \tilde{Y}_{m+1}, ..., \tilde{Y}_d \}$ is another basis for $T$ respecting the sum $H \oplus V$ (i.e., $\spn\{\tilde{X_i} : 1 \leq i \leq m\} = H$ and $\spn\{\tilde{Y}_j : m+1 \leq j \leq d\} = V$), then the dual basis $\{\tilde{p}^1, ..., \tilde{p}^m, \tilde{\eta}^{m+1}, ..., \tilde{\eta}^d\}$ satisfies $\spn\{\tilde{p}^1, ..., \tilde{p}^m\} = \spn\{p^1, ..., p^m\}$ and $\spn\{\tilde{\eta}^{m+1}, ..., \tilde{\eta}^{d}\} = \spn\{\eta^{m+1}, ..., \eta^d\}$. From this we can deduce that the choice of $g^V$ really only depends on $V$ and not on the choice of basis of $T$ which respects the sum $H \oplus V$.

\emph{Construction Method 2}: Let $( \cdot, \cdot )$ be any inner product on $T$ which extends $\langle \cdot, \cdot \rangle$ in such a way that $V = H^{\perp}$ with respect to $( \cdot, \cdot )$ (such an extension always exists). Denote by $g$ the isomorphism $T \to T^*$ defined by $g(X) = ( \cdot, X) \in T^*$ for every $X \in T$. Let's note that $g(V) \subset \Null(\beta)$; indeed, if $Y \in V$ and $\eta = g(Y)$, then $\langle \beta(\eta), X \rangle = \eta(X) = (X, Y) = 0$ for every $X \in H$, showing that $\eta \in \Null(\beta)$. In fact, $g(V) = \Null(\beta)$, which is clear once we deduce that $\beta \circ g (X) = X$ for every $X \in H$. To this end, if $X \in H$ and $g(X) = p$, then $\langle \beta(p), Y \rangle = p(Y) = (Y, X) = \langle X, Y \rangle$ for every $Y \in H$ since $( \cdot, \cdot )$ extends $\langle \cdot, \cdot \rangle$; from this it is clear that $\beta(p) = \beta \circ g(X) = X$.  Define $g^V = g \circ \beta \circ g$. Then $g^V (Y) = g \circ (\beta \circ g(Y)) = g(0) = 0$ for every $Y \in V$, and $\beta \circ g^V(X) = \beta \circ g( \beta \circ g(X)) = \beta \circ g(X)  = X$ for every $X \in H$.

\emph{Uniqueness}: Using the notation above, it must be that $g^V = (\beta_V)^{-1} \oplus {\bf 0} : H \oplus V \to T^*$, from which uniqueness follows.
\end{proof}

\begin{acknowledgement}
The authors would  like to thank Alexander Teplyaev for his useful and insightful comments.
\end{acknowledgement}

\end{document}